\newtheorem{Theorem}{Theorem}
\newtheorem{Lemma}[Theorem]{Lemma}
\newtheorem{Definition}[Theorem]{Definition}
\newtheorem{Remark}[Theorem]{Remark}
\newcommand\blfootnote[1]{%
	\begingroup
	\renewcommand\thefootnote{}\footnote{#1}%
	\addtocounter{footnote}{-1}%
	\endgroup
}
\begin{document}

\pagestyle{fancy}

\fancyhf{}
\fancyhead[L]{Dynamics of a Model of Polluted Lakes}
\fancyfoot[C]{\thepage}

\title{Dynamics of a Model of Polluted Lakes via Fractal-Fractional Operators 
with Two Different Numerical Algorithms\blfootnote{This is a preprint 
of a paper whose final and definite form is published Open Access 
in \emph{Chaos Solitons Fractals} at [https://doi.org/10.1016/j.chaos.2024.114653].}}


\author{%
Tanzeela Kanwal$^{1,*}$
\and Azhar Hussain$^{1,2}$ 
\and \.{I}brahim Avc{\i}$^{3}$ 
\and Sina Etemad$^{4,5}$ 
\and Shahram Rezapour$^{4,6,7,*}$ 
\and Delfim F. M. Torres$^{8,}$\thanks{Correspondence: \texttt{tanzeelakanwal16@gmail.com} (T.K.);
\texttt{sh.rezapour@azaruniv.ac.ir} (S.R.); \texttt{delfim@ua.pt} (D.F.M.T.)} 
}

\date{%
$^{1}$Department of Mathematics, University of Sargodha, Sargodha 40100, Pakistan; \texttt{tanzeelakanwal16@gmail.com} (T.K.)\\
$^{2}$Department of Mathematics, University of Chakwal, Chakwal 48800, Pakistan; \texttt{azhar.hussain@uoc.edu.pk} (A.H.)\\
$^{3}$Department of Computer Engineering, Faculty of Engineering, Final International University, Kyrenia,
Northern Cyprus, via Mersin 10, Turkey; \texttt{ibrahim.avci@final.edu.tr} (\.{I}.A.)\\
$^{4}$Department of Mathematics, Azarbaijan Shahid Madani University, Tabriz, Iran;
\texttt{sina.etemad@azaruniv.ac.ir} (S.E.); \texttt{sh.rezapour@azaruniv.ac.ir} (S.R.)\\
$^{5}$Mathematics in Applied Sciences and Engineering Research Group,\\ 
Scientific Research Center, Al-Ayen University, Nasiriyah 64001, Iraq\\
$^{6}$Department of Mathematics, Kyung Hee University, 26 Kyungheedae-ro, Dongdaemun-gu, Seoul, Republic of Korea\\
$^{7}$Department of Medical Research, China Medical University Hospital, China Medical University, Taichung, Taiwan\\
$^{8}$Center for Research and Development in Mathematics and Applications (CIDMA),
Department of Mathematics, University of Aveiro, 3810-193 Aveiro, Portugal; \texttt{delfim@ua.pt} (D.F.M.T.)}

\maketitle


\begin{abstract}
We employ Mittag--Leffler type kernels to solve a system of fractional
differential equations using fractal-fractional (FF) operators with two fractal
and fractional orders. Using the notion of FF-derivatives with nonsingular and
nonlocal fading memory, a model of three polluted lakes with one source of pollution
is investigated. The properties of a non-decreasing and compact mapping are used
in order to prove the existence of a solution for the FF-model of polluted lake system.
For this purpose, the Leray--Schauder theorem is used. After exploring stability
requirements in four versions, the proposed model of polluted lakes system is then
simulated using two new numerical techniques based on Adams--Bashforth
and Newton polynomials methods. The effect of fractal-fractional differentiation
is illustrated numerically. Moreover, the effect of the FF-derivatives
is shown under three specific input models of the pollutant:
linear, exponentially decaying, and periodic.

\medskip

\noindent {\bf Keywords:} Pollution of waters; Fractal-fractional derivatives model;
Existence, unicity and stability; Adams--Bashforth and Newton polynomials methods.

\noindent {\bf MSC:} 34A08; 65P99.
\end{abstract}


\section{Introduction}

In the last century, pollution of waters has become a severe danger to the world we live in.
The first step in preparing to conserve the natural environment is to monitor pollution levels.
Monitoring pollution is possible to achieve with the use of mathematical analysis.
Differential equations may be used to simulate environmental contamination,
just as they can be used in many other fields. For example, Biazar {\it et al.}
utilized in 2006 a set of differential equations to predict the pollution level
in a series of lakes \cite{baz}. In concrete, they have proposed a model of triple
lakes connected by channels through compartment modeling. Some other scholars
have investigated this concept using various methodologies. Y\"{u}zba\c{s}i {\it et al.} \cite{wv1}
analyzed such levels of pollution under the collocation method in 2012. Later, Benhammouda {\it et al.}
\cite{wv2} utilized another method to solve the pollution model via a modified differential transform.
Khader {\it et al.} \cite{khad} have also created a fractional case model and used the matrix properties
in 2013. Recently, in 2019, Bildik and Deniz \cite{pp1} considered an Atangana--Baleanu based model
for approximating the solutions of a polluted lake system. After that, Ahmed and Khan turned
to a similar model of lake pollution via different fractional methods \cite{pp2}. In 2020,
Prakasha and Veeresha \cite{pp3} solved such a system of polluted lakes via the so-called q-HATM method.
More recently, in 2022, Shiri and Baleanu have done a research on the amount of pollution
in a three-compartmental model and derived some analytical results \cite{pp4}.
During these years, fractional models of real-world processes have been studied
by many other researchers, showing the applicability of fractional operators
in mathematical modeling: see, e.g., \cite{z1,MR4376325,z4,z5,z7,z9,MR4536746}.
Here we propose and study a mathematical model via a generalized family
of derivatives equipped with two parameters.

Atangana introduced a new class of fractal-fractional notions,
which brings together the two applicable areas of fractal and fractional calculi \cite{ab}.
The structure of these operators is a convolution of the power-law, exponential law,
and modified Mittag--Leffler law with fractal derivatives, which establishes
a connection between fractional and fractal mathematics. The fractal
dimension and order are the two components of these operators and
differential equations with fractal-fractional derivatives convert the putative system's
order and dimension into a rational system. Because of this characteristic,
conventional differential equations are naturally extended to systems with
any order of derivatives and dimensions. The goal of these coupled operators
is to look at distinct nonlocal boundary value problems (BVPs) or
initial value problems (IVPs) that have fractal tendencies in nature.
Many scholars provided results and discoveries in this area, demonstrating
that fractal-fractional operators are more effective at describing
real-world data and for mathematical modeling. Examples of such mathematical models include:
fractal-fractional structures of dynamics of corona viruses \cite{FF01},
malaria transmission \cite{FF02}, dynamics of COVID-19 in Wuhan \cite{FF03},
transmission of AH1N1/09 virus \cite{z11b}, dynamics of Q fever \cite{FF04},
HIV \cite{FF}, dynamics of CD4$^{+}$ cells \cite{z12}, tuberculosis disease \cite{z13}, etc.

The incorporation of fractal-fractional (FF) operators with dual fractal 
and fractional orders in scientific research presents a promising avenue 
with multifaceted advantages. By leveraging two orders simultaneously, 
this approach allows for a more nuanced and refined representation of 
complex systems, capturing intricate patterns and irregularities that 
traditional methods might overlook. The synergy of fractal geometry and 
fractional calculus enhances the modeling and analysis of real-world phenomena, 
providing a more accurate reflection of the inherent self-similar structures 
and non-integer order dynamics. This not only refines our understanding of 
intricate processes but also facilitates the development of more robust 
mathematical models that can be applied across various disciplines. 
The utilization of FF operators holds the potential to revolutionize fields 
ranging from signal processing to image analysis, offering a versatile 
toolkit to address challenges that demand a deeper comprehension of intricate, 
multifractal behaviors. Embracing this innovative paradigm contributes 
to a more holistic and precise approach in scientific investigations, 
opening new frontiers for exploration and discovery.
Here we conduct an analysis of a fractal-fractional model of polluted lakes
in terms of various different characteristics.

The paper is organized as follows. In Section~\ref{sec:2},
we introduce a fractal-fractional system to model polluted lakes.
Existence of a solution to the proposed system is proved in Section~\ref{sec:3}
by using the Leray--Schauder theorem. In Section~\ref{sec:4},
we employ the Banach principle for contractions
to demonstrate uniqueness of solution. Furthermore,
using functional analysis, numerous requirements
for different types of stability for the solution to the polluted lakes system model
are explored in Section~\ref{sec:5}. To simulate our model, we use two different techniques:
a fractional Adams--Bashforth approach (Section~\ref{sec:6}) and a second one
based on Newton's polynomials (Section~\ref{sec:7}). The obtained theoretical results
are then tested in Section~\ref{sec:8} by applying our algorithms with some concrete
data under various fractal and fractional order values in three different cases:
linear, exponentially decaying and periodic input real models.
We end with Section~\ref{sec:9} of conclusion.


\section{The FF-model for a polluted system of three lakes}
\label{sec:2}

We model three lakes. Using three lakes in a system might 
be a good practical choice based on various factors such as land 
availability, cost, and efficiency. The decision of considering 
here three lakes is not purely mathematical, but involves environmental, 
economic, and logistical considerations. Mathematical modeling could 
help optimize the distribution and size of the lakes, but it is essential 
to balance these factors for a sustainable and effective solution. Therefore, 
we restrict ourselves to three lakes and their channels with a pollutant source. 
One can generalize our results to a finite number of lakes.

Each lake is treated as a compartment,
a linking channel between two lakes being viewed as a pipe
connecting the compartments. The direction of the flow across
each channel or pipeline is shown by arrows. A contaminant $c$
is considered in the first lake. By $c(s)$ we denote the rate
at which the contaminant/pollutant
enters Lake~1 at time $s$. The major purpose is to determine
the pollution levels in each lake at any given moment.
To do so, we regard the concentration $C_i(s)$ of the pollutant
in the lake $i$ at time $s$, $s\geq0$, by
\begin{equation}
C_i(s) = \frac{L_i(s)}{V_i},
\end{equation}
where $V_i$ denotes the water volume at lake $i$, $i \in \{1, 2, 3\}$,
assumed to be constant, and $L_i(s)$ specifies the quantity of pollution
that is equally distributed over each lake at time $s$. We are interested to model
the situation shown in Figure~\ref{Fig0}, where we use the symbol $F_{ji}$
to represent the flow rate entering the $j$th lake from the $i$th.
\begin{figure}[ht!]
\centering
\includegraphics[width=0.8\linewidth]{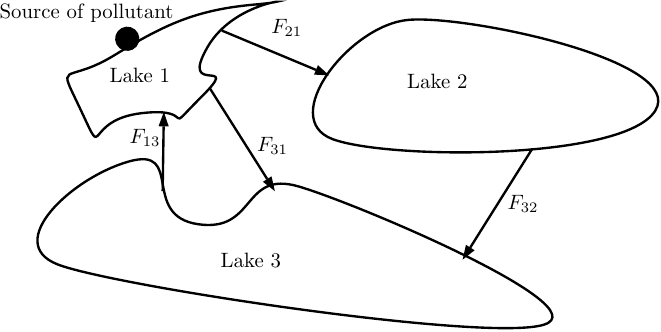}
\caption{Schematic of channels interconnecting the three lakes being modeled.}
\label{Fig0}
\end{figure}
Based on Figure~\ref{Fig0}, we derive the following conditions:
\begin{equation}
\begin{split}
Lake~1&: F_{13} = F_{31} + F_{21},\\
Lake~2&: F_{21} = F_{32},\\
Lake~3&: F_{32} + F_{31} = F_{13}.
\end{split}
\end{equation}
Note that $F_{12}= 0$ since there exists no pipe between the second and the first lakes.
The flux $\mathcal{F}_{ji}(s)$ of pollutant flowing from the $i$th lake to the $j$th lake
at an arbitrary time $s$ measures the flow rate of the concentration of pollutant.
This index equals
\begin{equation}
\mathcal{F}_{ji}(s) = F_{ji} C_i(s) = F_{ji} \frac{L_i(s)}{V_i}.
\end{equation}
Based on the principle that the rate of change of the pollutant is given
by the difference between the input rate and the output rate, we propose
here the following fractal-fractional model for the dynamic behavior
of the polluted lake system of three lakes
via the generalized Mittag--Leffler kernel:
\begin{equation}
\label{model2}
\begin{cases}
{}^{\mathbf{FFML}}\mathcal{D}_{0,s}^{(\theta,\sigma)}{L_1}(s)
= \frac{F_{13}}{V_3}L_3(s)+c(s)-\frac{F_{31}}{V_1}L_1(s)
-\frac{F_{21}}{V_1}L_1(s),\\[0.3cm]
{}^{\mathbf{FFML}}\mathcal{D}_{0,s}^{(\theta,\sigma)}{L_2}(s)
= \frac{F_{21}}{V_1}L_1(s)-\frac{F_{32}}{V_2}L_2(s),\\[0.3cm]
{}^{\mathbf{FFML}}\mathcal{D}_{0,s}^{(\theta,\sigma)}{L_3}(s)
=\frac{F_{31}}{V_1}L_1(s)+\frac{F_{32}}{V_2}L_2(s)
-\frac{F_{13}}{V_3}L_3(s),
\end{cases}
\end{equation}
subject to
\begin{equation}
\label{model2:IC}
{L_1}(0)=L_{1,0} \geq 0,
\quad {L_2}(0)= L_{2,0} \geq 0,
\quad {L_3}(0)=L_{3,0} \geq 0,
\end{equation}
where ${}^{\mathbf{FFML}}\mathcal{D}_{0,s}^{(\theta,\sigma)}$
is the $(\theta,\sigma)$-fractal-fractional
derivative with Mittag--Leffler type kernel
of fractional and fractal orders
$\theta \in (0,1]$ and $\sigma \in (0,1]$,
respectively, as introduced by Atangana in \cite{ab}.

\begin{Definition}[See \cite{ab}]
Let $f : (a,b) \to [0,\infty)$ be a continuous map that is fractal
differentiable of dimension $\sigma$. In this case, the Riemann--Liouville
$(\theta,\sigma)$-fractal-fractional derivative of $f$ with the generalized
Mittag--Leffler type kernel of order $\theta$ is given by
\begin{equation}
\label{eq21}
{}^{\mathbf{FFML}}\mathcal{D}_{a,s}^{(\theta,\sigma)}f(s)
=\frac{\mathcal{AB}(\theta)}{1-\theta} \frac{\mathrm{d}}{\mathrm{d}s^\sigma}
\int_a^{s} \mathbb{E}_\theta \left[ - \frac{\theta}{1-\theta }
(s-\mathfrak{w})^{\theta} \right] f ( \mathfrak{w})\,
\mathrm{d}\mathfrak{w},~~~0< \theta, \sigma \leq 1,
\end{equation}
where
$$
\dfrac{\mathrm{d}f(\mathfrak{w})}{\mathrm{d}\mathfrak{w}^\sigma}
= \lim_{s \to \mathfrak{w}} \dfrac{f(s)
-f(\mathfrak{w})}{s^\sigma - \mathfrak{w}^\sigma}
$$
is the fractal derivative and $\mathcal{AB}(\theta)
= 1-\theta + \dfrac{\theta }{\Gamma(\theta)}$
with $ \mathcal{AB}(0)= \mathcal{AB}(1) = 1$.
\end{Definition}

In what follows, we also use the corresponding notion of fractal-fractional integral.

\begin{Definition}[See \cite{ab}]
The $(\theta,\sigma)$-fractal-fractional
integral of a function $f$ with generalized kernel
is given by
\begin{equation}
\label{eq22}
{}^{\mathbf{FFML}}\mathcal{I}_{a,s}^{(\theta,\sigma)}f(s)
= \frac{\theta\sigma}{\mathcal{AB}(\theta) \Gamma(\theta)}
\int_a^s \mathfrak{w}^{\sigma -1} (s- \mathfrak{w})^{\theta-1}
f(\mathfrak{w})\, \mathrm{d}\mathfrak{w}
+\frac{(1-\theta)\sigma s^{\sigma-1}}{\mathcal{AB}(\theta)} f(s),
\end{equation}
if it exists, where $\theta, \sigma >0$.
\end{Definition}


\section{Existence}
\label{sec:3}

We begin by proving existence of solution to our problem
\eqref{model2}--\eqref{model2:IC}. For that we use fixed point theory.
To conduct our qualitative analysis, let us define the Banach space
$\mathbb{X} = \mathbb{C}^3$, where $\mathbb{C} = C(\mathbb{J},\mathbb{R})$ with
$$
\Vert \mathbb{K} \Vert_{\mathbb{X}}
= \Vert \big( {L_1}, {L_2}, {L_3}  \big) \Vert_{\mathbb{X}}
= \max \big\{ \vert W(s) \vert  :~~  s\in \mathbb{J} \big\},
$$
for $ \vert W \vert := \vert {L_1} \vert
+  \vert {L_2} \vert + \vert {L_3} \vert$.
We rewrite the right-hand-side of the fractal-fractional
polluted lake system \eqref{model2} as
\begin{equation}
\label{eq41}
\begin{cases}
\mathbb{Q}_1 \big( s, {L_1}(s), {L_2}(s), {L_3}(s) \big)
= \frac{F_{13}}{V_3}L_3(s)+c(s)-\frac{F_{31}}{V_1}L_1(s)
-\frac{F_{21}}{V_1}L_1(s),\\[0.3cm]
\mathbb{Q}_2 \big( s, {L_1}(s), {L_2}(s), {L_3}(s) \big)
= \frac{F_{21}}{V_1}L_1(s)-\frac{F_{32}}{V_2}L_2(s),\\[0.3cm]
\mathbb{Q}_3 \big( s, {L_1}(s), {L_2}(s), {L_3}(s) \big)
= \frac{F_{31}}{V_1}L_1(s)+\frac{F_{32}}{V_2}L_2(s)
-\frac{F_{13}}{V_3}L_3(s).
\end{cases}
\end{equation}
In this case, the fractal-fractional polluted lake system \eqref{model2}
is transformed into the following system:
\begin{equation}
\label{eq42}
\begin{cases}
{}^{\mathbf{ABR}}\mathcal{D}_{0,s}^{\theta} {L_1}(s)
= \sigma s^{\sigma - 1} \mathbb{Q}_1 \big( s, {L_1}(s),
{L_2}(s), {L_3}(s) \big), \\[0.3cm]
{}^{\mathbf{ABR}}\mathcal{D}_{0,s}^{\theta} {L_2}(s)
= \sigma s^{\sigma - 1} \mathbb{Q}_2 \big( s, {L_1}(s),
{L_2}(s), {L_3}(s) \big), \\[0.3cm]
{}^{\mathbf{ABR}}\mathcal{D}_{0,s}^{\theta} {L_3}(s)
= \sigma s^{\sigma - 1} \mathbb{Q}_3 \big( s, {L_1}(s),
{L_2}(s), {L_3}(s) \big).
\end{cases}
\end{equation}
In view of \eqref{eq42}, we rewrite our tree-state system
as the compact IVP
\begin{equation}
\label{eq43}
\begin{cases}
{}^{\mathbf{ABR}}\mathcal{D}_{0,s}^{\theta} \mathbb{K}(s)
= \sigma s^{\sigma - 1} \mathbb{Q} \big(s, \mathbb{K}(s)\big), \\[0.3cm]
\mathbb{K}(0)=\mathbb{K}_0,
\end{cases}
\end{equation}
where
\begin{equation}
\label{eq44}
\mathbb{K}(s) = \big({L_1}(s), {L_2}(s), {L_3}(s) \big)^T,
\quad\quad \mathbb{K}_0 = \big( L_{1,0}, L_{2,0}, L_{3,0} \big)^T,
~~~~\theta , \sigma \in (0,1],
\end{equation}
and
\begin{equation}
\label{eq45}
\mathbb{Q} \big(s, \mathbb{K}(s)\big)
=
\begin{cases}
\mathbb{Q}_1 \big(s, {L_1}(s), {L_2}(s), {L_3}(s) \big), \\[0.3cm]
\mathbb{Q}_2 \big(s, {L_1}(s), {L_2}(s), {L_3}(s) \big), \\[0.3cm]
\mathbb{Q}_3 \big(s, {L_1}(s), {L_2}(s), {L_3}(s) \big),
\quad s\in \mathbb{J}.
\end{cases}
\end{equation}
By definition and by \eqref{eq43}, we have
\begin{equation}
\label{eq4444}
\frac{\mathcal{AB}(\theta)}{1-\theta} \frac{\mathrm{d}}{\mathrm{d}s}
\int_0^{s} \mathbb{E}_\theta \left[ - \frac{\theta}{1-\theta }
(s-\mathfrak{w})^{\theta} \right] \mathbb{K} ( \mathfrak{w})\,
\mathrm{d}\mathfrak{w} = \sigma s^{\sigma - 1} \mathbb{Q} \big(s, \mathbb{K}(s)\big).
\end{equation}
Applying the fractal-fractional Atangana--Baleanu integral on \eqref{eq4444}, we get
\begin{equation}
\label{eq46}
\mathbb{K}(s) = \mathbb{K}(0) + \frac{\theta\sigma}{\mathcal{AB}(\theta)
\Gamma(\theta)} \int_0^s \mathfrak{w}^{\sigma -1} (s- \mathfrak{w})^{\theta-1}
\mathbb{Q}(\mathfrak{w}, \mathbb{K}(\mathfrak{w}))\, \mathrm{d}\mathfrak{w}
+\frac{(1-\theta)\sigma s^{\sigma-1}}{\mathcal{AB}(\theta)} \mathbb{Q}(s, \mathbb{K}(s)).
\end{equation}
The extended representation of \eqref{eq46} is given by
\begin{equation}
\label{eq477}
\begin{cases}
\displaystyle {L_1}(s) = L_{1,0}
+\frac{(1-\theta)\sigma s^{\sigma-1}}{\mathcal{AB}(\theta)}
\mathbb{Q}_1(s, {L_1}(s),{L_2}(s),{L_3}(s))\\[0.4cm]
~~~~~~~~~+ \displaystyle\frac{\theta\sigma}{\mathcal{AB}(\theta) \Gamma(\theta)}
\int_0^s \mathfrak{w}^{\sigma -1} (s- \mathfrak{w})^{\theta-1} \mathbb{Q}_1(\mathfrak{w},
{L_1}(\mathfrak{w}),{L_2}(\mathfrak{w}),{L_3}(\mathfrak{w}))\,
\mathrm{d}\mathfrak{w}, \\[0.6cm]
\displaystyle {L_2}(s) = L_{2,0}
+\frac{(1-\theta)\sigma s^{\sigma-1}}{\mathcal{AB}(\theta)}
\mathbb{Q}_2(s, {L_1}(s),{L_2}(s),{L_3}(s))\\[0.4cm]
~~~~~~~~~+ \displaystyle\frac{\theta\sigma}{\mathcal{AB}(\theta) \Gamma(\theta)}
\int_0^s \mathfrak{w}^{\sigma -1} (s- \mathfrak{w})^{\theta-1} \mathbb{Q}_2(\mathfrak{w},
{L_1}(\mathfrak{w}),{L_2}(\mathfrak{w}),{L_3}(\mathfrak{w}))
\, \mathrm{d}\mathfrak{w}, \\[0.6cm]
\displaystyle {L_3}(s) = L_{3,0}
+\frac{(1-\theta)\sigma s^{\sigma-1}}{\mathcal{AB}(\theta)}
\mathbb{Q}_3(s, {L_1}(s),{L_2}(s),{L_3}(s))\\[0.4cm]
~~~~~~~~~+ \displaystyle\frac{\theta\sigma}{\mathcal{AB}(\theta)
\Gamma(\theta)} \int_0^s \mathfrak{w}^{\sigma -1}
(s- \mathfrak{w})^{\theta-1} \mathbb{Q}_3(\mathfrak{w},
{L_1}(\mathfrak{w}),{L_2}(\mathfrak{w}),
{L_3}(\mathfrak{w}))\, \mathrm{d}\mathfrak{w}.
\end{cases}
\end{equation}
To derive a fixed-point problem,
we now define the self-map
$F: \mathbb{X} \to \mathbb{X}$ as
\begin{equation}
\label{eq47}
F(\mathbb{K}(s)) = \mathbb{K}(0)
+\frac{(1-\theta)\sigma s^{\sigma-1}}{\mathcal{AB}(\theta)}
\mathbb{Q}(s, \mathbb{K}(s))
+\frac{\theta\sigma}{\mathcal{AB}(\theta) \Gamma(\theta)}
\int_0^s \mathfrak{w}^{\sigma -1} (s- \mathfrak{w})^{\theta-1}
\mathbb{Q}(\mathfrak{w}, \mathbb{K}(\mathfrak{w}))\, \mathrm{d}\mathfrak{w}.
\end{equation}
	
To prove existence of solution to our fractal-fractional polluted
lake system \eqref{model2}, we make use of the following Leray--Schauder theorem.

\begin{Theorem}[Leray--Schauder fixed point theorem \cite{29}]
\label{Schauder}
Let $\mathbb{X}$ be a Banach space, $\mathbb{E} \subset \mathbb{X}$
a closed convex and bounded set, and $\mathbb{O} \subset \mathbb{E}$
an open set with $0\in \mathbb{O}$. Then, under the compact and continuous mapping
$F : \bar{\mathbb{O}} \to \mathbb{E}$, either:
\begin{enumerate}
\item[(Y1)] $\exists \, y \in \bar{\mathbb{O}}\, s.t.\, y = F(y)$, or
\item[(Y2)] $\exists\, y \in \partial \mathbb{O}$, $\mu \in (0,1)$
such that $y = \mu F (y)$.
\end{enumerate}
\end{Theorem}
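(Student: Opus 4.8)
The plan is to derive the Leray--Schauder alternative from the classical Schauder fixed point theorem by means of a Urysohn-type gluing, arguing by contradiction: assume that neither (Y1) nor (Y2) holds, and manufacture a fixed point of $F$ anyway.

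First I would introduce the set of ``partial fixed points''
\begin{equation*}
A = \bigl\{\, y \in \bar{\mathbb{O}} \ : \ y = \mu F(y) \text{ for some } \mu \in [0,1] \,\bigr\}.
\end{equation*}
It is nonempty, since $0 \in \mathbb{O}$ and $0 = 0 \cdot F(0)$, and it is closed: if $y_n = \mu_n F(y_n)$ with $y_n \to y$, then along a subsequence $\mu_n \to \mu \in [0,1]$, and continuity of $F$ gives $y = \mu F(y)$. Because $0$ is an interior point of $\mathbb{O}$ (so $0 \notin \partial\mathbb{O}$), and because, assuming (Y1) and (Y2) both fail, there is no $y \in \partial\mathbb{O}$ with $y = \mu F(y)$ for any $\mu \in (0,1]$, we get $A \cap \partial\mathbb{O} = \emptyset$; hence $A$ is a closed subset of the open set $\mathbb{O}$. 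Normality of the metric space $\bar{\mathbb{O}}$ (Urysohn's lemma) then yields a continuous function $\varphi : \bar{\mathbb{O}} \to [0,1]$ with $\varphi \equiv 1$ on $A$ and $\varphi \equiv 0$ on $\partial\mathbb{O}$.

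Next I would define $G : \mathbb{E} \to \mathbb{E}$ by $G(y) = \varphi(y)\,F(y)$ for $y \in \bar{\mathbb{O}}$ and $G(y) = 0$ for $y \in \mathbb{E}\setminus\mathbb{O}$. The pasting lemma shows $G$ is continuous, since the two prescriptions agree on the common closed set $\partial\mathbb{O}$, where $\varphi = 0$; and $G$ is $\mathbb{E}$-valued because $\varphi(y)F(y) + \bigl(1-\varphi(y)\bigr)\cdot 0$ is a convex combination of $F(y) \in \mathbb{E}$ and $0 \in \mathbb{E}$ in the convex set $\mathbb{E}$. Moreover $G$ is a compact map: $G(\mathbb{E})$ lies in the closed convex hull of $\{0\} \cup \overline{F(\bar{\mathbb{O}})}$, which is compact by Mazur's theorem since $F$ is compact. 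Applying the Schauder fixed point theorem to $G$ on the nonempty, closed, convex, bounded set $\mathbb{E}$ produces $y^{\ast} \in \mathbb{E}$ with $y^{\ast} = G(y^{\ast})$.

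Finally I would locate $y^{\ast}$. If $y^{\ast} \in \mathbb{E}\setminus\mathbb{O}$, then $y^{\ast} = G(y^{\ast}) = 0 \in \mathbb{O}$, a contradiction; hence $y^{\ast} \in \bar{\mathbb{O}}$ and $y^{\ast} = \varphi(y^{\ast})F(y^{\ast}) = \mu F(y^{\ast})$ with $\mu = \varphi(y^{\ast}) \in [0,1]$, i.e.\ $y^{\ast} \in A$. Therefore $\varphi(y^{\ast}) = 1$ and $y^{\ast} = F(y^{\ast})$, contradicting the failure of (Y1). This contradiction establishes the alternative. I expect the only delicate point to be the boundary bookkeeping --- verifying that $G$ is globally continuous and $\mathbb{E}$-valued across the seam $\partial\mathbb{O}$, and that its range is relatively compact --- while the remainder is a routine packaging of Schauder's theorem; no genuinely new idea is needed beyond the Urysohn separation of $A$ from $\partial\mathbb{O}$.
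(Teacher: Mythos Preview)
Your argument is a correct and standard derivation of the Leray--Schauder alternative from Schauder's fixed point theorem via a Urysohn cutoff, and the boundary bookkeeping you flag as delicate is handled properly: the two branches of $G$ agree on $\partial\mathbb{O}$, convexity of $\mathbb{E}$ together with $0\in\mathbb{E}$ keeps $G$ $\mathbb{E}$-valued, and Mazur's theorem gives the relative compactness of the range.

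However, there is nothing to compare against here: the paper does \emph{not} prove Theorem~\ref{Schauder}. It is quoted verbatim from Granas--Dugundji \cite{29} as a background tool and then invoked in the proof of Theorem~\ref{th33}. So your proposal is not an alternative to the paper's proof --- the paper simply has none --- but rather a self-contained justification of a result the authors take as known. If your intent was to supply what the paper omits, you have done so; if your intent was to reconstruct the authors' argument, be aware that no such argument appears in the text.
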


Given that the polluted lake system models a real-world problem, 
its existence is subject to certain constraints. These constraints, 
denoted in Theorem~\ref{th33} as (P1) and (P2), play a crucial role 
in shaping the dynamics and characteristics of the system. Indeed,
(P1) and (P2) are indispensable to define and regulate the behavior 
of the polluted lake system within the confines of practicality and reality. 
Recognizing these constraints is essential for constructing a comprehensive 
understanding of the system and developing effective strategies.

\begin{Theorem}
\label{th33}
Let $\mathbb{Q}\in C(\mathbb{J}\times \mathbb{X}, \mathbb{X})$. If
\begin{itemize}
\item[(P1)] $\exists\,\varphi \in L^1(\mathbb{J}, \mathbb{R}^+)$
and $\exists\,A \in C([0,\infty ), (0,\infty))$ ($A$ non-decreasing)
such that $\forall\,s\in \mathbb{J}$ and $\mathbb{K} \in \mathbb{X}$,
$$
\big\vert \mathbb{Q}(s, \mathbb{K}(s)) \big\vert
\leq \varphi(s) A ( \vert \mathbb{K}(s) \vert );
$$
\item[(P2)] $\exists\,\omega>0$ such that
\begin{equation}
\label{eqK}
\frac{\omega}{\mathbb{K}_0 + \left[  \dfrac{(1-\theta)
\sigma S^{\sigma-1}}{\mathcal{AB}(\theta)}
+  \dfrac{\theta\sigma S^{\theta+\sigma - 1}
\Gamma(\sigma)}{\mathcal{AB}(\theta)
\Gamma(\theta + \sigma)} \right] \varphi_0^* A(\omega)}> 1
\end{equation}
with $\varphi_0^* = \sup_{s\in \mathbb{J}} \vert \varphi(s) \vert$;
\end{itemize}
then there exists a solution to the fractal-fractional
polluted lake system \eqref{model2}.
\end{Theorem}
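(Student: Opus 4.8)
The plan is to apply the Leray--Schauder alternative (Theorem~\ref{Schauder}) to the operator $F$ defined in \eqref{eq47}, showing that alternative (Y2) cannot occur and therefore a fixed point --- hence a solution of \eqref{model2}--\eqref{model2:IC} --- must exist. First I would fix the finite interval $\mathbb{J}=[0,S]$ and set
$$
\Lambda := \left[ \frac{(1-\theta)\sigma S^{\sigma-1}}{\mathcal{AB}(\theta)}
+ \frac{\theta\sigma S^{\theta+\sigma-1}\Gamma(\sigma)}{\mathcal{AB}(\theta)\Gamma(\theta+\sigma)} \right]\varphi_0^*,
$$
the constant appearing in the denominator of \eqref{eqK}; the Beta-function identity $\int_0^s \mathfrak{w}^{\sigma-1}(s-\mathfrak{w})^{\theta-1}\,\mathrm{d}\mathfrak{w} = s^{\theta+\sigma-1}B(\theta,\sigma) = s^{\theta+\sigma-1}\Gamma(\theta)\Gamma(\sigma)/\Gamma(\theta+\sigma)$ is what produces the second bracketed term, and the bound $s^{\sigma-1}\le S^{\sigma-1}$, $s^{\theta+\sigma-1}\le S^{\theta+\sigma-1}$ on $\mathbb{J}$ is used throughout (valid since the relevant exponents are handled as bounds on the compact interval).

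The argument then proceeds in the standard four steps. \emph{Step 1: $F$ is well-defined and continuous.} Since $\mathbb{Q}\in C(\mathbb{J}\times\mathbb{X},\mathbb{X})$, continuity of $F$ follows from the Lebesgue dominated convergence theorem applied to the integral term, using hypothesis (P1) to dominate the integrand uniformly on bounded sets; the non-integral term $\frac{(1-\theta)\sigma s^{\sigma-1}}{\mathcal{AB}(\theta)}\mathbb{Q}(s,\mathbb{K}(s))$ is continuous on $\mathbb{J}$ as a composition (one checks that $s^{\sigma-1}\mathbb{Q}(s,\mathbb{K}(s))$ has a continuous extension, which is where one tacitly uses $\sigma\le 1$ together with the structure of $\mathbb{Q}$, or simply works on $[0,S]$ with the integrability that (P1) grants). \emph{Step 2: $F$ maps bounded sets to bounded sets.} For $\mathbb{K}$ in the ball $B_\rho=\{\mathbb{K}\in\mathbb{X}:\|\mathbb{K}\|_{\mathbb{X}}\le\rho\}$, apply (P1) and the Beta-integral computation above to obtain $\|F(\mathbb{K})\|_{\mathbb{X}}\le \|\mathbb{K}_0\| + \Lambda A(\rho)$, using that $A$ is non-decreasing so $A(|\mathbb{K}(s)|)\le A(\rho)$. \emph{Step 3: $F$ is equicontinuous.} For $s_1<s_2$ in $\mathbb{J}$ and $\mathbb{K}\in B_\rho$, estimate $|F(\mathbb{K})(s_2)-F(\mathbb{K})(s_1)|$ by splitting into the difference of the non-integral terms (controlled via uniform continuity of $s\mapsto s^{\sigma-1}$ away from $0$, and a separate small-$s$ estimate near $0$) and the difference of the two integrals (the usual $\int_0^{s_1}[(s_2-\mathfrak{w})^{\theta-1}-(s_1-\mathfrak{w})^{\theta-1}]$ plus $\int_{s_1}^{s_2}(s_2-\mathfrak{w})^{\theta-1}$ splitting), each of which tends to $0$ as $s_2-s_1\to 0$ uniformly in $\mathbb{K}\in B_\rho$. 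Then Arzel\`a--Ascoli gives that $F$ is compact (completely continuous).

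\emph{Step 4: ruling out (Y2).} Suppose $\mathbb{K}=\mu F(\mathbb{K})$ for some $\mu\in(0,1)$. Taking norms and using (P1) together with the Beta computation exactly as in Step 2 yields
$$
\|\mathbb{K}\|_{\mathbb{X}} \le \mathbb{K}_0 + \Lambda A(\|\mathbb{K}\|_{\mathbb{X}}),
$$
i.e.\ $\dfrac{\|\mathbb{K}\|_{\mathbb{X}}}{\mathbb{K}_0+\Lambda A(\|\mathbb{K}\|_{\mathbb{X}})}\le 1$. By hypothesis (P2) there is $\omega>0$ with $\dfrac{\omega}{\mathbb{K}_0+\Lambda A(\omega)}>1$, so $\|\mathbb{K}\|_{\mathbb{X}}\ne\omega$; setting $\mathbb{O}=\{\mathbb{K}\in\mathbb{X}:\|\mathbb{K}\|_{\mathbb{X}}<\omega\}$ (open, convex, bounded, containing $0$) we conclude no $\mathbb{K}\in\partial\mathbb{O}$ satisfies $\mathbb{K}=\mu F(\mathbb{K})$ for any $\mu\in(0,1)$. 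Hence alternative (Y2) of Theorem~\ref{Schauder} fails, so (Y1) holds: $F$ has a fixed point in $\overline{\mathbb{O}}$, which by the equivalence \eqref{eq46}--\eqref{eq47} is a solution of the fractal-fractional polluted lake system \eqref{model2}. The main obstacle I anticipate is Step 3 (and the continuity claim near $s=0$ in Step 1): the factor $s^{\sigma-1}$ is singular at the origin when $\sigma<1$, so the equicontinuity estimate for the non-integral term needs a careful separate treatment near $0$ --- one handles $0\le s_1<s_2\le\delta$ by bounding $|s_2^{\sigma-1}-s_1^{\sigma-1}|$ crudely in the $L^1$/integrated sense or restricts attention to the regular regime, while uniform continuity on $[\delta,S]$ is routine; everything else is a direct application of the Beta-function estimate and monotonicity of $A$.
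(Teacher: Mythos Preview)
Your proposal is correct and follows essentially the same approach as the paper: apply the Leray--Schauder alternative to the operator $F$ of \eqref{eq47}, verifying continuity, boundedness via (P1) and the Beta-function identity, equicontinuity (hence compactness by Arzel\`a--Ascoli), and then ruling out (Y2) using the a priori bound $\|\mathbb{K}\|_{\mathbb{X}}\le \mathbb{K}_0+\Lambda A(\|\mathbb{K}\|_{\mathbb{X}})$ together with (P2). If anything you are more careful than the paper in flagging the $s^{\sigma-1}$ singularity at $s=0$ in the equicontinuity step, which the paper's estimate $(v^{\sigma-1}-s^{\sigma-1})\to 0$ treats rather casually.
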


\begin{proof}
First, consider $F: \mathbb{X} \rightarrow \mathbb{X}$,
which is formulated in~\eqref{eq47}, and assume
$$
N_r = \Big\{ \mathbb{K}\in \mathbb{X}\, :\, \Vert \mathbb{K}\Vert_{\mathbb{X}} \leq r \Big\},
$$
for some $r >0$. Clearly, as $\mathbb{Q}$ is continuous, 
thus $F$ is also so. From (P1), we get
\begin{align*}
\big\vert F (\mathbb{K}(s)) \big\vert
&\leq \big\vert \mathbb{K}(0) \big\vert
+ \frac{(1-\theta)\sigma s^{\sigma-1}}{\mathcal{AB}(\theta)} \big\vert
\mathbb{Q}(s, \mathbb{K}(s)) \big\vert \\[0.4cm]
&\quad + \frac{\theta\sigma}{\mathcal{AB}(\theta) \Gamma(\theta)}
\int_0^s \mathfrak{w}^{\sigma -1} (s- \mathfrak{w})^{\theta-1} \big\vert
\mathbb{Q}(\mathfrak{w}, \mathbb{K}(\mathfrak{w})) \big\vert \, \mathrm{d}\mathfrak{w} \\[0.4cm]
&\leq \mathbb{K}_0 + \frac{(1-\theta)\sigma s^{\sigma-1}}{\mathcal{AB}(\theta)}
\varphi (s) A (\vert \mathbb{K}(s)\vert) \\[0.4cm]
&\quad +  \frac{\theta\sigma}{\mathcal{AB}(\theta) \Gamma(\theta)}
\int_0^s \mathfrak{w}^{\sigma -1} (s- \mathfrak{w})^{\theta-1}
\varphi (\mathfrak{w}) A (\vert \mathbb{K}(\mathfrak{w})\vert)
\, \mathrm{d}\mathfrak{w} \\[0.4cm]
&\leq \mathbb{K}_0 + \frac{(1-\theta)\sigma S^{\sigma-1}}{\mathcal{AB}(\theta)}
\varphi_0^* A (r) +  \frac{\theta\sigma S^{\theta+\sigma - 1}
B(\theta,\sigma) }{\mathcal{AB}(\theta) \Gamma(\theta)} \varphi_0^* A(r) \\[0.4cm]
&= \mathbb{K}_0 + \frac{(1-\theta)\sigma S^{\sigma-1}}{\mathcal{AB}(\theta)}
\varphi_0^* A (r) +  \frac{\theta\sigma S^{\theta+\sigma - 1}
\Gamma(\sigma) }{\mathcal{AB}(\theta) \Gamma(\theta + \sigma)} \varphi_0^* A(r),
\end{align*}
for $\mathbb{K}\in N_r$. Hence,
\begin{equation}
\label{eq49}
\Vert F \mathbb{K} \Vert_{\mathbb{X}} \leq \mathbb{K}_0
+ \left[  \frac{(1-\theta)\sigma S^{\sigma-1}}{\mathcal{AB}(\theta)}
+  \frac{\theta\sigma S^{\theta+\sigma - 1} \Gamma(\sigma) }{\mathcal{AB}(\theta)
\Gamma(\theta + \sigma)} \right] \varphi_0^* A(r) < \infty.
\end{equation}
Thus, $F$ is uniformly bounded on $\mathbb{X}$. Now, take $s, v \in [0,S]$
such that $s<v$ and $\mathbb{K}\in N_r$. By denoting
$$
\sup_{(s,\mathbb{K})\in \mathbb{J}\times N_r}
\vert \mathbb{Q}(s, \mathbb{K}(s))\vert = \mathbb{Q}^* < \infty,
$$
we estimate
\begin{align}
\label{eq410}
\big\vert F (\mathbb{K}(v)) - F (\mathbb{K}(s)) \big\vert
&\leq \Bigg\vert \frac{(1-\theta)\sigma v^{\sigma-1}}{\mathcal{AB}(\theta)}
\mathbb{Q}(v, \mathbb{K}(v)) - \frac{(1-\theta)\sigma
s^{\sigma-1}}{\mathcal{AB}(\theta)} \mathbb{Q}(s, \mathbb{K}(s))  \nonumber\\[0.3cm]
&\qquad +\frac{\theta\sigma}{\mathcal{AB}(\theta) \Gamma(\theta)}
\int_0^v \mathfrak{w}^{\sigma -1} (v- \mathfrak{w})^{\theta-1}
\mathbb{Q}(\mathfrak{w}, \mathbb{K}(\mathfrak{w}))\, \mathrm{d}\mathfrak{w} \nonumber\\[0.4cm]
&\qquad -\frac{\theta\sigma}{\mathcal{AB}(\theta) \Gamma(\theta)}
\int_0^s \mathfrak{w}^{\sigma -1} (s- \mathfrak{w})^{\theta-1}
\mathbb{Q}(\mathfrak{w}, \mathbb{K}(\mathfrak{w}))\, \mathrm{d}\mathfrak{w} \Bigg\vert \nonumber\\[0.4cm]
&\leq \frac{(1-\theta)\sigma \mathbb{Q}^* }{\mathcal{AB}(\theta)} (v^{\sigma-1} - s^{\sigma-1}) \\[0.3cm]
&\qquad + \frac{\theta\sigma\mathbb{Q}^*}{\mathcal{AB}(\theta) \Gamma(\theta)}
\left\vert \int_0^{v} \mathfrak{w}^{\sigma - 1}(v-\mathfrak{w})^{\theta -1}
\, \mathrm{d}\mathfrak{w} - \int_0^{s} \mathfrak{w}^{\sigma -1}(s-\mathfrak{w})^{\theta -1}
\, \mathrm{d}\mathfrak{w} \right\vert \nonumber\\[0.4cm]
&\leq \frac{(1-\theta)\sigma \mathbb{Q}^* }{\mathcal{AB}(\theta)}
(v^{\sigma-1} - s^{\sigma-1}) + \frac{\theta\sigma
\mathbb{Q}^* B(\theta , \sigma)}{\mathcal{AB}(\theta)\Gamma(\theta)}
\big[ v^{\theta + \sigma - 1} - s^{\theta + \sigma - 1}  \big] \nonumber\\[0.4cm]
&= \frac{(1-\theta)\sigma \mathbb{Q}^*}{\mathcal{AB}(\theta)} (v^{\sigma-1}
- s^{\sigma-1}) +  \frac{\theta\sigma \mathbb{Q}^* \Gamma(\sigma) }{\mathcal{AB}(\theta)
\Gamma(\theta + \sigma)} \big[ v^{\theta + \sigma - 1} - s^{\theta + \sigma - 1}  \big]. \nonumber
\end{align}
We see that the right-hand side of \eqref{eq410} approaches to $0$
independent of $\mathbb{K}$, as $v \to s$. Consequently,
$$
\Vert F (\mathbb{K}(v)) - F (\mathbb{K}(s)) \Vert_{\mathbb{X}} \to 0,
$$
when $v\to s$. This gives the equicontinuity of $F$ and, accordingly, the compactness
of $F$ on $N_r$ by the Arzel\'{a}--Ascoli thoerem. As Theorem~\ref{Schauder}
is fulfilled on $F$, we have one of (Y1) or~(Y2). From~(P2), we set
$$
\Phi := \Big\{ \mathbb{K}\in \mathbb{X}\, : \, ~\Vert \mathbb{K}\Vert_{\mathbb{X}} < \omega \Big\},
$$
for some $\omega>0$, such that
$$
\mathbb{K}_0 + \left[  \dfrac{(1-\theta)\sigma S^{\sigma-1}}{\mathcal{AB}(\theta)}
+  \dfrac{\theta\sigma S^{\theta+\sigma - 1} \Gamma(\sigma) }{\mathcal{AB}(\theta)
\Gamma(\theta + \sigma)} \right] \varphi_0^* A(\omega) < \omega.
$$
From~(P1) and \eqref{eq49}, we have
\begin{equation}\label{eq411}
\Vert F \mathbb{K} \Vert_{\mathbb{X}}
\leq \mathbb{K}_0 + \left[  \frac{(1-\theta)\sigma S^{\sigma-1}}{\mathcal{AB}(\theta)}
+ \frac{\theta\sigma S^{\theta+\sigma - 1} \Gamma(\sigma) }{\mathcal{AB}(\theta)
\Gamma(\theta + \sigma)} \right] \varphi_0^* A(\Vert \mathbb{K}\Vert_\mathbb{X}).
\end{equation}
Suppose that there are $\mathbb{K}\in \partial \Phi$ and $0< \mu < 1$ such that
$\mathbb{K} = \mu F(\mathbb{K})$. Then, by~\eqref{eq411}, we write
\begin{align*}
\omega = \Vert \mathbb{K} \Vert_{\mathbb{X}}
= \mu \Vert F \mathbb{K}\Vert_{\mathbb{X}}
&< \mathbb{K}_0 + \left[  \frac{(1-\theta)\sigma S^{\sigma-1}}{\mathcal{AB}(\theta)}
+ \frac{\theta\sigma S^{\theta+\sigma - 1} \Gamma(\sigma) }{\mathcal{AB}(\theta)
\Gamma(\theta + \sigma)} \right] \varphi_0^* A(\Vert \mathbb{K}\Vert_\mathbb{X}) \\[0.3cm]
&< \mathbb{K}_0 + \left[  \frac{(1-\theta)\sigma S^{\sigma-1}}{\mathcal{AB}(\theta)}
+  \frac{\theta\sigma S^{\theta+\sigma - 1} \Gamma(\sigma) }{\mathcal{AB}(\theta)
\Gamma(\theta + \sigma)} \right] \varphi_0^* A(\omega) < \omega,
\end{align*}
which cannot hold true. Thus, (Y2) is not satisfied and $F$ admits a fixed-point in
$\bar{\Phi}$ by Theorem~\ref{Schauder}. This proves the existence of a solution
to the FF polluted lake model \eqref{model2}.
\end{proof}


\section{Uniqueness}
\label{sec:4}

As a first step to prove uniqueness of solution to our problem
\eqref{model2}--\eqref{model2:IC}, we begin by investigating
a Lipschitz property of the fractal-fractional polluted lake
system \eqref{model2}.

\begin{Lemma}
\label{lem51}
Consider ${L_1}, {L_2}, {L_3}, {L_1}^*, {L_2}^*,
{L_3}^* \in \mathbb{C} := C(\mathbb{J}, \mathbb{R})$, and let
\begin{itemize}
\item[(C1)]  $ \Vert {L_1} \Vert \leq \beta_1 $,
$\Vert {L_2} \Vert \leq \beta_2$,
$\Vert {L_3} \Vert \leq \beta_3$
for some constants $\beta_1,\beta_2,\beta_3 >0$.
\end{itemize}
Then, $\mathbb{Q}_1$, $\mathbb{Q}_2$, and $\mathbb{Q}_3$ defined in \eqref{eq41}
fulfill the Lipschitz property with constants $\alpha_1, \alpha_2, \alpha_3>0$
with respect to the relevant components, where
\begin{equation}
\label{eq51}
\alpha_1 = \frac{F_{31}+F_{21}}{V_1},
\quad \alpha_2 = \frac{F_{32}}{V_2},
\quad \alpha_3 =\frac{F_{13}}{V_3}.
\end{equation}
\end{Lemma}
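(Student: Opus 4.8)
The plan is to exploit the fact that each $\mathbb{Q}_i$ in \eqref{eq41} is an \emph{affine} function of the state variables, so the Lipschitz estimates reduce to reading off the coefficient of the relevant component. First I would fix $s\in\mathbb{J}$ and pick $L_1,L_1^*\in\mathbb{C}$ while keeping $L_2,L_3\in\mathbb{C}$ fixed. Subtracting the two values of $\mathbb{Q}_1$ gives
$$
\mathbb{Q}_1\big(s,L_1(s),L_2(s),L_3(s)\big)-\mathbb{Q}_1\big(s,L_1^*(s),L_2(s),L_3(s)\big)
= -\Bigl(\tfrac{F_{31}}{V_1}+\tfrac{F_{21}}{V_1}\Bigr)\bigl(L_1(s)-L_1^*(s)\bigr),
$$
since the $\frac{F_{13}}{V_3}L_3(s)$ and $c(s)$ terms cancel identically. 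Taking absolute values and then the supremum over $s\in\mathbb{J}$ yields the Lipschitz inequality for $\mathbb{Q}_1$ with constant $\alpha_1=(F_{31}+F_{21})/V_1$, exactly as in \eqref{eq51}.

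Next I would repeat the same computation verbatim for $\mathbb{Q}_2$ and $\mathbb{Q}_3$. For $\mathbb{Q}_2$ the only term carrying $L_2$ is $-\frac{F_{32}}{V_2}L_2(s)$, so $\big\vert\mathbb{Q}_2(s,L_1,L_2,L_3)-\mathbb{Q}_2(s,L_1,L_2^*,L_3)\big\vert=\frac{F_{32}}{V_2}\vert L_2(s)-L_2^*(s)\vert$, giving $\alpha_2=F_{32}/V_2$; for $\mathbb{Q}_3$ the only term carrying $L_3$ is $-\frac{F_{13}}{V_3}L_3(s)$, giving $\alpha_3=F_{13}/V_3$. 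In each case one passes to the norm on $\mathbb{C}$ by taking the supremum over $\mathbb{J}$, and the three displayed bounds together constitute the claimed Lipschitz property "with respect to the relevant components".

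Since the right-hand sides of \eqref{model2} are linear in the unknowns, the computation is purely mechanical and there is no genuine obstacle. The only point worth commenting on is the status of hypothesis (C1): the a priori bounds $\Vert L_i\Vert\le\beta_i$ are not actually needed to obtain the affine-type Lipschitz inequalities above (they hold globally), but they are recorded here because the boundedness of the state components is what will later be combined with these Lipschitz constants in the Banach contraction argument for uniqueness and in the stability analysis of Section~\ref{sec:5}. If convenient, one can also aggregate the three estimates into $\big\vert\mathbb{Q}(s,\mathbb{K}(s))-\mathbb{Q}(s,\mathbb{K}^*(s))\big\vert\le\max\{\alpha_1,\alpha_2,\alpha_3\}\,\vert W(s)-W^*(s)\vert$, which is the form directly suited to the fixed-point map $F$ in \eqref{eq47}.
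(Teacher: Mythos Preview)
Your proposal is correct and matches the paper's own proof essentially line for line: the paper also fixes the other components, subtracts the two values of each $\mathbb{Q}_i$, reads off the linear coefficient of the relevant $L_i$, and passes to the sup-norm to obtain the constants $\alpha_1,\alpha_2,\alpha_3$ in \eqref{eq51}. Your additional observation that hypothesis (C1) is not actually used in this computation is accurate and worth noting.
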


\begin{proof}
For $\mathbb{Q}_1$, we take ${L_1}, {L_1}^* \in \mathbb{C}
:= C(\mathbb{J}, \mathbb{R})$ arbitrarily, and we have
\begin{equation}
\label{eq:above}
\begin{aligned}
\Vert \mathbb{Q}_1 &\big(s, {L_1}(s), {L_2}(s), {L_3}(s) \big)
- \mathbb{Q}_1 \big(s, {L_1}^*(s), {L_2}(s), {L_3}(s) \big) \Vert \\[0.3cm]
&= \left\Vert \left( -\frac{F_{31}}{V_1}{L_1}(s)-\frac{F_{21}}{V_1}{L_1}(s)\right)
\left( -\frac{F_{31}}{V_1}{L_1}^*(s)-\frac{F_{21}}{V_1}{L_1}^*(s)\right) \right\Vert \\[0.3cm]
&\leq \left[\frac{F_{31}+F_{21}}{V_1}  \right] \Vert {L_1}(s) - {L_1}^*(s) \Vert
= \alpha_1 \Vert {L_1}(s) - {L_1}^*(s) \Vert.
\end{aligned}
\end{equation}
From \eqref{eq:above}, we find out that $\mathbb{Q}_1$ is Lipschitz with respect to
${L_1}$ under the constant $\alpha_1 > 0$. For $\mathbb{Q}_2$, we choose
arbitrary ${L_2}, {L_2}^* \in \mathbb{C}
:= C(\mathbb{J}, \mathbb{R})$, and estimate
\begin{align*}
\Vert \mathbb{Q}_2 \big(s, {L_1}(s), {L_2}(s), {L_3}(s) \big)
&- \mathbb{Q}_2 \big(s, {L_1}(s), {L_2}^*(s), {L_3}(s) \big) \Vert \\[0.3cm]
&= \left\Vert \left( -\frac{F_{32}}{V_2}{L_2}(s) \right)
- \left( -\frac{F_{32}}{V_2} {L_2}^*(s)\right) \right\Vert \\[0.3cm]
&\leq \left[ \frac{F_{32}}{V_2} \right] \Vert {L_2}(s) - {L_2}^*(s) \Vert \\[0.3cm]
&= \alpha_2 \Vert {L_2}(s) - {L_2}^*(s) \Vert.
\end{align*}
This means that $\mathbb{Q}_2$ is Lipschitz with respect to
${L_2}$ under the constant $\alpha_2 > 0$. Finally, for arbitrary
elements ${L_3}, {L_3}^* \in \mathbb{C} := C(\mathbb{J}, \mathbb{R})$,
we have
\begin{align*}
\Vert \mathbb{Q}_3 \big(s, {L_1}(s), {L_2}(s), {L_3}(s) \big)
&- \mathbb{Q}_3 \big(s, {L_1}(s), {L_2}(s), {L_3}^*(s) \big) \Vert \\[0.3cm]
&= \left\Vert \left(- \frac{F_{13}}{V_3}{L_3}(s) \right)
- \left( -\frac{F_{13}}{V_3}{L_3}^*(s) \right) \right\Vert \\[0.3cm]
&\leq \left[ \frac{F_{13}}{V_3} \right] \Vert {L_3}(s) - {L_3}^*(s) \Vert \\[0.3cm]
&= \alpha_3 \Vert {L_3}(s) - {L_3}^*(s) \Vert.
\end{align*}
This shows that $\mathbb{Q}_3$ is Lipschitzian with respect to ${L_3}$
with $\alpha_3>0$. Therefore, the kernel functions $\mathbb{Q}_1$,
$\mathbb{Q}_2$, and $\mathbb{Q}_3$ are Lipschitz, respectively
with constants $\alpha_1, \alpha_2, \alpha_3 > 0$.
\end{proof}

By invoking Lemma~\ref{lem51}, we now prove
uniqueness of solution to the FF-system \eqref{model2}.

\begin{Theorem}
\label{thm52}
Let (C1) hold. If
\begin{equation}
\label{eq52}
\left[\frac{(1-\theta)\sigma S^{\sigma-1}}{\mathcal{AB}(\theta)}
+ \frac{\theta\sigma S^{\theta+\sigma - 1} \Gamma(\sigma) }{\mathcal{AB}(\theta)
\Gamma(\theta + \sigma)} \right] \alpha_j < 1,
\end{equation}
for $j\in \{ 1,2,3\}$ and where $\alpha_j > 0$
are the Lipschitz constants introduced by \eqref{eq51},
then the fractal-fractional polluted lake system
\eqref{model2} possesses exactly one solution.
\end{Theorem}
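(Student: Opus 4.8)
The plan is to recast problem \eqref{model2}--\eqref{model2:IC} as the fixed-point equation $\mathbb{K} = F(\mathbb{K})$ for the self-map $F$ of \eqref{eq47}, and to show that hypothesis \eqref{eq52} forces $F$ to be a Banach contraction on the complete space $(\mathbb{X}, \Vert\cdot\Vert_{\mathbb{X}})$. Theorem~\ref{th33} already supplies a fixed point; the contraction estimate promotes it to the \emph{unique} fixed point, hence to the unique solution, via the Banach contraction principle. So the whole proof reduces to one Lipschitz-type estimate for $F$.

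Concretely, I would take arbitrary $\mathbb{K} = (L_1,L_2,L_3)$ and $\mathbb{K}^* = (L_1^*,L_2^*,L_3^*)$ in $\mathbb{X}$ and bound $\vert F(\mathbb{K}(s)) - F(\mathbb{K}^*(s))\vert$ for $s\in\mathbb{J}$ componentwise through the representation \eqref{eq477}. Each component difference splits into the nonlocal piece carrying the factor $\tfrac{(1-\theta)\sigma s^{\sigma-1}}{\mathcal{AB}(\theta)}$ and the fractional-integral piece carrying $\tfrac{\theta\sigma}{\mathcal{AB}(\theta)\Gamma(\theta)}\int_0^s \mathfrak{w}^{\sigma-1}(s-\mathfrak{w})^{\theta-1}\,\mathrm{d}\mathfrak{w}$. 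To every occurrence of $\mathbb{Q}_j(\cdot,\mathbb{K}) - \mathbb{Q}_j(\cdot,\mathbb{K}^*)$ I apply Lemma~\ref{lem51}, dominating $\Vert L_j - L_j^*\Vert$ by $\Vert \mathbb{K} - \mathbb{K}^*\Vert_{\mathbb{X}}$; the one elementary computation needed is the Beta identity $\int_0^s \mathfrak{w}^{\sigma-1}(s-\mathfrak{w})^{\theta-1}\,\mathrm{d}\mathfrak{w} = s^{\theta+\sigma-1} B(\theta,\sigma) = s^{\theta+\sigma-1}\Gamma(\theta)\Gamma(\sigma)/\Gamma(\theta+\sigma)$, after which the powers $s^{\sigma-1}$, $s^{\theta+\sigma-1}$ are replaced by their values at $s=S$, exactly as in the estimates \eqref{eq49}--\eqref{eq410}.

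Summing the three components and taking the supremum over $s\in\mathbb{J}$ then yields
\begin{equation*}
\Vert F\mathbb{K} - F\mathbb{K}^*\Vert_{\mathbb{X}}
\leq \Lambda\,\Vert \mathbb{K} - \mathbb{K}^*\Vert_{\mathbb{X}},
\qquad
\Lambda := \max_{j\in\{1,2,3\}}\left[\frac{(1-\theta)\sigma S^{\sigma-1}}{\mathcal{AB}(\theta)}
+ \frac{\theta\sigma S^{\theta+\sigma-1}\Gamma(\sigma)}{\mathcal{AB}(\theta)\Gamma(\theta+\sigma)}\right]\alpha_j,
\end{equation*}
and \eqref{eq52} says precisely that $\Lambda < 1$. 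Thus $F$ is a contraction, and the Banach fixed point theorem gives a unique $\mathbb{K}\in\mathbb{X}$ with $F(\mathbb{K})=\mathbb{K}$, i.e.\ a unique solution of the fractal-fractional polluted lake system \eqref{model2}.

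The main obstacle I anticipate is the bookkeeping in the Lipschitz step. Lemma~\ref{lem51} is stated as a one-variable Lipschitz bound for each $\mathbb{Q}_j$, whereas the genuine difference $\mathbb{Q}_1(s,\mathbb{K}) - \mathbb{Q}_1(s,\mathbb{K}^*)$ also involves the $L_3$-component (and $\mathbb{Q}_3$ all three); one must therefore either apply the lemma coordinate by coordinate and absorb the cross terms into $\max_j\alpha_j$, or rerun the direct computation of Lemma~\ref{lem51} on the full vector fields — either way the constant stays controlled by $\Lambda$. A secondary point needing a word of care is the replacement $s^{\sigma-1}\le S^{\sigma-1}$ on $\mathbb{J}$, which is used here only to keep the bound homogeneous with Section~\ref{sec:3}; making it fully rigorous requires taking $\mathbb{J}=[s_0,S]$ bounded away from the origin (or absorbing the endpoint behaviour into $\varphi$ as in (P1)), and the same caveat already applies to the existence argument.
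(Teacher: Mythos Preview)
Your proposal is correct, but the route differs from the paper's. The paper does not set up $F$ as a Banach contraction on $\mathbb{X}$; instead it argues by direct contradiction, component by component. Assuming a second solution $(L_1^*,L_2^*,L_3^*)$ with the same initial data, it writes each $L_j$ and $L_j^*$ via the integral representation~\eqref{eq477}, subtracts, applies Lemma~\ref{lem51} and the same Beta-function identity you use, and arrives at
\[
\left(1-\left[\frac{(1-\theta)\sigma S^{\sigma-1}}{\mathcal{AB}(\theta)}+\frac{\theta\sigma S^{\theta+\sigma-1}\Gamma(\sigma)}{\mathcal{AB}(\theta)\Gamma(\theta+\sigma)}\right]\alpha_j\right)\Vert L_j-L_j^*\Vert\le 0,
\]
forcing $L_j=L_j^*$ for each $j$ separately. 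Your Banach-contraction formulation is the more structured version of the same estimate and has the bonus of delivering existence and uniqueness in one stroke (so the appeal to Theorem~\ref{th33} in your first paragraph is unnecessary); the paper's argument is more bare-hands but only yields uniqueness. The core computation---Lipschitz bound plus Beta integral plus replacement of $s$ by $S$---is identical in both. The cross-term concern you raise in your last paragraph is present verbatim in the paper's proof as well: it bounds $\vert\mathbb{Q}_1(\cdot,L_1,L_2,L_3)-\mathbb{Q}_1(\cdot,L_1^*,L_2^*,L_3^*)\vert$ by $\alpha_1\Vert L_1-L_1^*\Vert$ as if $\mathbb{Q}_1$ depended on $L_1$ alone, and likewise glosses over the monotonicity of $s^{\sigma-1}$; your flagging of these points is an improvement in rigor rather than a divergence.
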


\begin{proof}
We do the proof by contradiction. Assume there exists another solution
to the fractal-fractional polluted lake system \eqref{model2}, namely
$\left( {L_1}^*(s), {L_2}^*(s), {L_3}^*(s) \right)$,
under initial conditions
$$
{L_1}^*(0)=L_{1,0},
\quad {L_2}^*(0) =L_{2,0},
\quad {L_3}^*(0)=L_{3,0}.
$$
From \eqref{eq477}, we have
\begin{align*}
{L_1}^*(s)
&= L_{1,0} + \frac{(1-\theta)\sigma s^{\sigma-1}}{\mathcal{AB}(\theta)}
\mathbb{Q}_1(s, {L_1}^*(s), {L_2}^*(s), {L_3}^*(s)) \\[0.3cm]
&\qquad +\frac{\theta\sigma}{\mathcal{AB}(\theta) \Gamma(\theta)}
\int_0^s \mathfrak{w}^{\sigma -1} (s- \mathfrak{w})^{\theta-1}
\mathbb{Q}_1(\mathfrak{w}, {L_1}^*(\mathfrak{w}),
{L_2}^*(\mathfrak{w}), {L_3}^*(\mathfrak{w}))
\, \mathrm{d}\mathfrak{w},
\end{align*}
\begin{align*}
{L_2}^*(s)
&= L_{2,0} +\frac{(1-\theta)\sigma s^{\sigma-1}}{\mathcal{AB}(\theta)}
\mathbb{Q}_2(s, {L_1}^*(s), {L_2}^*(s), {L_3}^*(s))\\[0.3cm]
&\qquad +\frac{\theta\sigma}{\mathcal{AB}(\theta) \Gamma(\theta)}
\int_0^s \mathfrak{w}^{\sigma -1} (s- \mathfrak{w})^{\theta-1}
\mathbb{Q}_2(\mathfrak{w}, {L_1}^*(\mathfrak{w}),
{L_2}^*(\mathfrak{w}), {L_3}^*(\mathfrak{w}))
\, \mathrm{d}\mathfrak{w},
\end{align*}
and
\begin{align*}
{L_3}^*(s)
&= L_{3,0} + \frac{(1-\theta)\sigma s^{\sigma-1}}{\mathcal{AB}(\theta)}
\mathbb{Q}_3(s, {L_1}^*(s), {L_2}^*(s), {L_3}^*(s))\\[0.3cm]
&\qquad +\frac{\theta\sigma}{\mathcal{AB}(\theta) \Gamma(\theta)}
\int_0^s \mathfrak{w}^{\sigma -1} (s- \mathfrak{w})^{\theta-1}
\mathbb{Q}_3(\mathfrak{w}, {L_1}^*(\mathfrak{w}),
{L_2}^*(\mathfrak{w}), {L_3}^*(\mathfrak{w}))
\, \mathrm{d}\mathfrak{w}.
\end{align*}
In this case, we estimate
\begin{align*}
\vert {L_1}(s) - {L_1}^*(s) \vert
&\leq \frac{(1-\theta)\sigma s^{\sigma-1}}{\mathcal{AB}(\theta)} \Big\vert
\mathbb{Q}_1(s, {L_1}(s), {L_2}(s), {L_3}(s))
- \mathbb{Q}_1(s, {L_1}^*(s), {L_2}^*(s),
{L_3}^*(s)) \Big\vert \\[0.3cm]
&\quad +\frac{\theta\sigma}{\mathcal{AB}(\theta) \Gamma(\theta)}
\int_0^s \mathfrak{w}^{\sigma -1} (s- \mathfrak{w})^{\theta-1} \\[0.3cm]
&\quad \times\Big\vert \mathbb{Q}_1(\mathfrak{w}, {L_1}(\mathfrak{w}),
{L_2}(\mathfrak{w}), {L_3}(\mathfrak{w}))
- \mathbb{Q}_1(\mathfrak{w}, {L_1}^*(\mathfrak{w}),
{L_2}^*(\mathfrak{w}), {L_3}^*(\mathfrak{w})) \Big\vert
\, \mathrm{d}\mathfrak{w} \\[0.3cm]
&\leq  \frac{(1-\theta)\sigma s^{\sigma-1}}{\mathcal{AB}(\theta)}
\alpha_1 \Vert {L_1}-{L_1}^*\Vert
+\frac{\theta\sigma}{\mathcal{AB}(\theta) \Gamma(\theta)}
\int_0^s \mathfrak{w}^{\sigma -1} (s- \mathfrak{w})^{\theta-1}
\alpha_1 \Vert {L_1}-{L_1}^*\Vert\, \mathrm{d}\mathfrak{w} \\[0.3cm]
&\leq \left[  \frac{(1-\theta)\sigma S^{\sigma-1}}{\mathcal{AB}(\theta)}
+ \frac{\theta\sigma S^{\theta+\sigma - 1} \Gamma(\sigma) }{\mathcal{AB}(\theta)
\Gamma(\theta + \sigma)} \right] \alpha_1 \Vert {L_1}-{L_1}^*\Vert,
\end{align*}
and so
$$
\left( 1 - \left[  \frac{(1-\theta)\sigma S^{\sigma-1}}{\mathcal{AB}(\theta)}
+ \frac{\theta\sigma S^{\theta+\sigma - 1} \Gamma(\sigma)}{\mathcal{AB}(\theta)
\Gamma(\theta + \sigma)} \right] \alpha_1 \right) \Vert {L_1}-{L_1}^*\Vert
\leq 0.
$$
From \eqref{eq52}, we can assert that the above inequality holds
if $\Vert {L_1} - {L_1}^* \Vert = 0$
or ${L_1} = {L_1}^*$.
Similarly, from
$$
\Vert {L_2} - {L_2}^* \Vert
\leq  \left[  \frac{(1-\theta)\sigma S^{\sigma-1}}{\mathcal{AB}(\theta)}
+ \frac{\theta\sigma S^{\theta+\sigma - 1} \Gamma(\sigma)}{\mathcal{AB}(\theta)
\Gamma(\theta + \sigma)} \right] \alpha_2 \Vert {L_2}-{L_2}^*\Vert,
$$
we obtain
$$
\left( 1 - \left[  \frac{(1-\theta)\sigma S^{\sigma-1}}{\mathcal{AB}(\theta)}
+  \frac{\theta\sigma S^{\theta+\sigma - 1} \Gamma(\sigma) }{\mathcal{AB}(\theta)
\Gamma(\theta + \sigma)} \right] \alpha_2 \right)
\Vert {L_2}-{L_2}^*\Vert \leq 0,
$$
which gives $\Vert {L_2} - {L_2}^* \Vert = 0$
or ${L_2} = {L_2}^*$. Furthermore,
$$
\Vert {L_3} - {L_3}^* \Vert
\leq  \left[  \frac{(1-\theta)\sigma S^{\sigma-1}}{\mathcal{AB}(\theta)}
+  \frac{\theta\sigma S^{\theta+\sigma - 1} \Gamma(\sigma) }{\mathcal{AB}(\theta)
\Gamma(\theta + \sigma)} \right] \alpha_3 \Vert {L_3}-{L_3}^*\Vert,
$$
which yields
$$
\left( 1 - \left[  \frac{(1-\theta)\sigma S^{\sigma-1}}{\mathcal{AB}(\theta)}
+ \frac{\theta\sigma S^{\theta+\sigma - 1} \Gamma(\sigma) }{\mathcal{AB}(\theta)
\Gamma(\theta + \sigma)} \right] \alpha_3 \right)
\Vert {L_3}-{L_3}^*\Vert \leq 0.
$$
Hence, ${L_3} = {L_3}^*$. As a consequence,
$$
\left( {L_1}(s), {L_2}(s), {L_3}(s) \right)
= \left( {L_1}^*(s), {L_2}^*(s), {L_3}^*(s) \right),
$$
which proves that the solution to the fractal-fractional
polluted lake system \eqref{model2} is unique.
\end{proof}


\section{Ulam--Hyers--Rassias stability}
\label{sec:5}

In this section, the stability of the solutions
to the polluted lake system of three lakes is studied.
Given the desire to establish robust mathematical 
foundations for the model, we consider four different 
notions of stability. More precisely, we prove stability 
for our fractal-fractional (FF)
polluted lake system \eqref{model2} with respect to
Ulam--Hyers and Ulam--Hyers--Rassias notions
and their respective generalizations.
Stability analysis is pivotal in ensuring mathematical models' 
reliability and predictability, especially in real-world applications 
such as the polluted lake system. Ulam stability, Hyers stability, 
and their generalizations offer valuable frameworks for understanding 
the behavior of solutions to dynamic systems under perturbations. Given 
the intricate nature of fractal-fractional systems, the use of these stability 
notions allows us to ascertain the system's resilience to variations and 
disturbances, providing insights into the long-term behavior and reliability 
of the proposed model. By choosing stability in this context, we aim to enhance 
the credibility of the model and its applicability in addressing 
the complexities inherent in polluted lake systems.

\begin{Definition}
\label{def41}
The FF polluted lake system \eqref{model2} is Ulam--Hyers stable
if there exists $a_{\mathbb{Q}_1}$, $a_{\mathbb{Q}_2}$,
$a_{\mathbb{Q}_3} \in \mathbb{R}^+$
such that for all $r_j>0$, $j = 1,2,3$, and
for all $\left( {L_1}^*, {L_2}^*, {L_3}^* \right)\in \mathbb{X}$
satisfying
\begin{align}
\label{eq61}
\begin{cases}
\Big\vert {}^{\mathbf{FFML}}\mathcal{D}_{0,s}^{(\theta,\sigma)}{L_1}^*(s)
- \mathbb{Q}_1 \left(s, {L_1}^*(s), {L_2}^*(s), {L_3}^*(s) \right)
\Big\vert < r_1, \\[0.3cm]
\Big\vert {}^{\mathbf{FFML}}\mathcal{D}_{0,s}^{(\theta,\sigma)}{L_2}^*(s)
- \mathbb{Q}_2 \left(s, {L_1}^*(s), {L_2}^*(s), {L_3}^*(s) \right)
\Big\vert < r_2, \\[0.3cm]
\Big\vert {}^{\mathbf{FFML}}\mathcal{D}_{0,s}^{(\theta,\sigma)}{L_3}^*(s)
- \mathbb{Q}_3 \left(s, {L_1}^*(s), {L_2}^*(s), {L_3}^*(s)\right)
\Big\vert < r_3,
\end{cases}
\end{align}
there exists $\left( {L_1}, {L_2}, {L_3}  \right)\in \mathbb{X}$
satisfying the fractal-fractional polluted lake system \eqref{model2} with
\begin{equation*}
\begin{cases}
\big\vert {L_1}^*(s) - {L_1}(s) \big\vert
\leq a_{\mathbb{Q}_1}r_1, \\[0.3cm]
\big\vert {L_2}^*(s) - {L_2}(s) \big\vert
\leq a_{\mathbb{Q}_2}r_2, \\[0.3cm]
\big\vert {L_3}^*(s) - {L_3}(s) \big\vert
\leq a_{\mathbb{Q}_3}r_3.
\end{cases}
\end{equation*}
\end{Definition}

\begin{Definition}
\label{def42}
The FF polluted lake system \eqref{model2} is generalized Ulam--Hyers stable
if $\exists\, a_{\mathbb{Q}_j} \in C(\mathbb{R}^+, \mathbb{R}^+)$,
$j\in \{ 1,2,3\}$, with $a_{\mathbb{Q}_j}(0)=0$ such that
$\forall\,r_j>0$ and $\forall\, \left( {L_1}^*, {L_2}^*, {L_3}^*  \right)
\in \mathbb{X} $ fulfilling \eqref{eq61}, there is
a solution $\left( {L_1}, {L_2}, {L_3} \right)\in \mathbb{X}$
of the given FF polluted lake system \eqref{model2} such that
\begin{equation*}
\begin{cases}
\big\vert {L_1}^*(s)
- {L_1}(s) \big\vert \leq a_{\mathbb{Q}_1}(r_1),\\[0.3cm]
\big\vert {L_2}^*(s)
- {L_2}(s) \big\vert \leq a_{\mathbb{Q}_2}(r_2),\\[0.3cm]
\big\vert {L_3}^*(s)
- {L_3}(s) \big\vert \leq a_{\mathbb{Q}_3}(r_3).
\end{cases}
\end{equation*}
\end{Definition}

\begin{Remark}
\label{Rem45}
The triplet $\left({L_1}^*, {L_2}^*, {L_3}^*  \right)\in \mathbb{X}$
is a solution for~\eqref{eq61} if, and only if,
$\exists\, z_1, z_2,z_3\in C([0,S],\mathbb{R})$
(each of them depend on $ {L_1}^*, {L_2}^*, {L_3}^* $, respectively)
such that $\forall\,s\in \mathbb{J}$,
\begin{enumerate}
\item[$(i)$] $\vert z_j(s)\vert < r_j,$
\item[$(ii)$] one has
\begin{align*}
\begin{cases}
{}^{\mathbf{FFML}}\mathcal{D}_{0,s}^{(\theta,\sigma)}{L_1}^*(s)
= \mathbb{Q}_1 \left(s, {L_1}^*(s), {L_2}^*(s),
{L_3}^*(s) \right) + z_1(s), \\[0.3cm]
{}^{\mathbf{FFML}}\mathcal{D}_{0,s}^{(\theta,\sigma)}{L_2}^*(s)
= \mathbb{Q}_2 \left(s, {L_1}^*(s), {L_2}^*(s),
{L_3}^*(s) \right) + z_2(s), \\[0.3cm]
{}^{\mathbf{FFML}}\mathcal{D}_{0,s}^{(\theta,\sigma)}{L_3}^*(s)
= \mathbb{Q}_3 \left(s, {L_1}^*(s),
{L_2}^*(s), {L_3}^*(s) \right) + z_3(s).
\end{cases}
\end{align*}
\end{enumerate}
\end{Remark}

\begin{Definition}
\label{def43}
The fractal-fractional polluted lake model \eqref{model2}
is Ulam--Hyers--Rassias stable with respect to
$\hbar_j$, $j\in \{ 1,2,3\}$, if
$\exists\, 0 < a_{(\mathbb{Q}_j,\hbar_j)} \in \mathbb{R}$
such that $\forall\,r_j>0$ and $\forall\,
\left( {L_1}^*, {L_2}^*, {L_3}^*
\right)\in \mathbb{X}$ fulfilling
\begin{align}
\label{eq62}
\begin{cases}
\Big\vert {}^{\mathbf{FFML}}\mathcal{D}_{0,s}^{(\theta,\sigma)}{L_1}^*(s)
- \mathbb{Q}_1 \left(s, {L_1}^*(s), {L_2}^*(s), {L_3}^*(s) \right)
\Big\vert < r_1\hbar_1(s), \\[0.3cm]
\Big\vert {}^{\mathbf{FFML}}\mathcal{D}_{0,s}^{(\theta,\sigma)}{L_2}^*(s)
- \mathbb{Q}_2 \left(s, {L_1}^*(s), {L_2}^*(s), {L_3}^*(s) \right)
\Big\vert < r_2\hbar_2(s), \\[0.3cm]
\Big\vert {}^{\mathbf{FFML}}\mathcal{D}_{0,s}^{(\theta,\sigma)}{L_3}^*(s)
- \mathbb{Q}_3 \left(s, {L_1}^*(s), {L_2}^*(s), {L_3}^*(s) \right)
\Big\vert < r_3\hbar_3(s),
\end{cases}
\end{align}
there exists a solution
$\left( {L_1}, {L_2}, {L_3}  \right) \in \mathbb{X}$
of the FF-model of polluted lake system \eqref{model2} such that
\begin{equation*}
\begin{cases}
\big\vert {L_1}^*(s) - {L_1}(s) \big\vert
\leq r_1a_{(\mathbb{Q}_1,\hbar_1)}\hbar_1(s),
\quad \forall\, s \in \mathbb{J}, \\[0.3cm]
\big\vert {L_2}^*(s) - {L_2}(s) \big\vert
\leq r_2a_{(\mathbb{Q}_2,\hbar_2)}\hbar_2(s),
\quad \forall\, s \in \mathbb{J}, \\[0.3cm]
\big\vert {L_3}^*(s) - {L_3}(s) \big\vert
\leq r_3a_{(\mathbb{Q}_3,\hbar_3)}\hbar_3(s),
\quad \forall\, s \in \mathbb{J},
\end{cases}
\end{equation*}
with $\hbar_1, \hbar_2, \hbar_3 \in C([0,S],\mathbb{R}^+)$.
\end{Definition}

\begin{Remark}
If $\hbar_j(s)=1$, then Definition~\ref{def43}
reduces to the Ulam--Hyers criterion.
\end{Remark}

\begin{Definition}
\label{def44}
The FF polluted lake system \eqref{model2} is
generalized  Ulam--Hyers--Rasias stable with respect to
$\hbar_j $ if exists $a_{(\mathbb{Q}_j,\hbar_j)}
\in C(\mathbb{R}^+, \mathbb{R}^+)$ such that for all $\big( {L_1}^*,
{L_2}^*, {L_3}^*  \big)\in \mathbb{X} $ satisfying
\begin{align*}
\begin{cases}
\Big\vert {}^{\mathbf{FFML}}\mathcal{D}_{0,s}^{(\theta,\sigma)}{L_1}^*(s)
- \mathbb{Q}_1 \left(s, {L_1}^*(s), {L_2}^*(s), {L_3}^*(s) \right)
\Big\vert < \hbar_1(s), \\[0.3cm]
\Big\vert {}^{\mathbf{FFML}}\mathcal{D}_{0,s}^{(\theta,\sigma)}{L_2}^*(s)
- \mathbb{Q}_2 \left(s, {L_1}^*(s), {L_2}^*(s), {L_3}^*(s) \right)
\Big\vert < \hbar_2(s), \\[0.3cm]
\Big\vert {}^{\mathbf{FFML}}\mathcal{D}_{0,s}^{(\theta,\sigma)}{L_3}^*(s)
- \mathbb{Q}_3 \left(s, {L_1}^*(s), {L_2}^*(s), {L_3}^*(s) \right)
\Big\vert < \hbar_3(s),
\end{cases}
\end{align*}
there exists a solution
$\left( {L_1}, {L_2}, {L_3}  \right) \in \mathbb{X}$
of the FF-model of polluted lake system \eqref{model2} such that
\begin{equation*}
\begin{cases}
\big\vert {L_1}^*(s) - {L_1}(s) \big\vert
\leq a_{(\mathbb{Q}_1,\hbar_1)}(r_1)\hbar_1(s), \\[0.3cm]
\big\vert {L_2}^*(s) - {L_2}(s) \big\vert
\leq a_{(\mathbb{Q}_2,\hbar_2)}(r_2)\hbar_2(s), \\[0.3cm]
\big\vert {L_3}^*(s) - {L_3}(s) \big\vert
\leq a_{(\mathbb{Q}_3,\hbar_3)}(r_3)\hbar_3(s).
\end{cases}
\end{equation*}
\end{Definition}

\begin{Remark}
\label{Rem46}
Note that $\left( {L_1}^*, {L_2}^*, {L_3}^*  \right)
\in \mathbb{X} $ is a solution for~\eqref{eq62} if, and only if,
$\exists\, z_1,z_2,z_3\in C([0,S],\mathbb{R})$ (each of them depend
on ${L_1}^*, {L_2}^*, {L_3}^*$, respectively)
such that $\forall\,s\in \mathbb{J}$,
\begin{enumerate}
\item[$(i)$] $ \vert z_j(s) \vert < r_j \hbar_j(s)$,
\item[$(ii)$] we have
\begin{align*}
\begin{cases}
{}^{\mathbf{FFML}}\mathcal{D}_{0,s}^{(\theta,\sigma)}{L_1}^*(s)
= \mathbb{Q}_1 \left(s, {L_1}^*(s), {L_2}^*(s), {L_3}^*(s) \right)
+ z_1(s),\\[0.3cm]
{}^{\mathbf{FFML}}\mathcal{D}_{0,s}^{(\theta,\sigma)}{L_2}^*(s)
= \mathbb{Q}_2 \left(s, {L_1}^*(s), {L_2}^*(s), {L_3}^*(s) \right)
+ z_2(s), \\[0.3cm]
{}^{\mathbf{FFML}}\mathcal{D}_{0,s}^{(\theta,\sigma)}{L_3}^*(s)
= \mathbb{Q}_3 \left(s, {L_1}^*(s), {L_2}^*(s), {L_3}^*(s) \right)
+ z_3(s).
\end{cases}
\end{align*}
\end{enumerate}
\end{Remark}

Lemmas~\ref{lem57} and \ref{lem58} are useful
to prove Theorems~\ref{thm47} and \ref{thm48},
respectively.
	
\begin{Lemma}
\label{lem57}
For each $r_1,r_2,r_3>0$, suppose that
$({L_1}^*, {L_2}^*, {L_3}^*)\in \mathbb{X}$
is a solution of \eqref{eq61}. Then, functions
${L_1}^*, {L_2}^*, {L_3}^* \in \mathbb{C}$
fulfill the following three inequalities:
\begin{align}
\label{eq53}
\Bigg\vert {L_1}^*(s)
&- \Big( L_{1,0} +\frac{(1-\theta)\sigma s^{\sigma-1}}{\mathcal{AB}(\theta)}
\mathbb{Q}_1\left(s, {L_1}^*(s), {L_2}^*(s), {L_3}^*(s)\right)
+ \frac{\theta\sigma}{\mathcal{AB}(\theta) \Gamma(\theta)}
\int_0^s \mathfrak{w}^{\sigma -1} (s- \mathfrak{w})^{\theta-1}  \nonumber\\[0.3cm]
&\times  \mathbb{Q}_1(\mathfrak{w}, {L_1}^*(\mathfrak{w}),
{L_2}^*(\mathfrak{w}), {L_3}^*(\mathfrak{w})) \, \mathrm{d}\mathfrak{w} \Big)
\Bigg\vert \leq \left[  \frac{(1-\theta)\sigma S^{\sigma-1}}{\mathcal{AB}(\theta)}
+  \frac{\theta\sigma S^{\theta+\sigma - 1} \Gamma(\sigma) }{\mathcal{AB}(\theta)
\Gamma(\theta + \sigma)} \right]r_1,
\end{align}
\begin{align}
\label{eq54}
\Bigg\vert {L_2}^*(s) &- \Big( L_{2,0}
+\frac{(1-\theta)\sigma s^{\sigma-1}}{\mathcal{AB}(\theta)}
\mathbb{Q}_2(s, {L_1}^*(s), {L_2}^*(s), {L_3}^*(s))
+ \frac{\theta\sigma}{\mathcal{AB}(\theta) \Gamma(\theta)}
\int_0^s \mathfrak{w}^{\sigma -1} (s- \mathfrak{w})^{\theta-1}  \nonumber\\[0.3cm]
&\times  \mathbb{Q}_2(\mathfrak{w}, {L_1}^*(\mathfrak{w}),
{L_2}^*(\mathfrak{w}), {L_3}^*(\mathfrak{w})) \, \mathrm{d}\mathfrak{w} \Big)
\Bigg\vert \leq \left[  \frac{(1-\theta)\sigma S^{\sigma-1}}{\mathcal{AB}(\theta)}
+ \frac{\theta\sigma S^{\theta+\sigma - 1} \Gamma(\sigma) }{\mathcal{AB}(\theta)
\Gamma(\theta + \sigma)} \right]r_2,
\end{align}
and
\begin{align}
\label{eq55}
\Bigg\vert {L_3}^*(s) &- \Big( L_{3,0}
+\frac{(1-\theta)\sigma s^{\sigma-1}}{\mathcal{AB}(\theta)}
\mathbb{Q}_3(s, {L_1}^*(s), {L_2}^*(s), {L_3}^*(s))
+ \frac{\theta\sigma}{\mathcal{AB}(\theta) \Gamma(\theta)}
\int_0^s \mathfrak{w}^{\sigma -1} (s- \mathfrak{w})^{\theta-1}  \nonumber\\[0.3cm]
&\times  \mathbb{Q}_3(\mathfrak{w}, {L_1}^*(\mathfrak{w}),
{L_2}^*(\mathfrak{w}), {L_3}^*(\mathfrak{w})) \, \mathrm{d}\mathfrak{w} \Big)
\Bigg\vert \leq \left[  \frac{(1-\theta)\sigma S^{\sigma-1}}{\mathcal{AB}(\theta)}
+ \frac{\theta\sigma S^{\theta+\sigma - 1} \Gamma(\sigma) }{\mathcal{AB}(\theta)
\Gamma(\theta + \sigma)} \right]r_3.
\end{align}
\end{Lemma}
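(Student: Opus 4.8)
The plan is to reduce the three estimates to the size of the perturbation functions supplied by Remark~\ref{Rem45}, and then to bound those functions using exactly the kernel estimates already employed in the proof of Theorem~\ref{th33} and in \eqref{eq410}.

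First I would apply Remark~\ref{Rem45}: since $({L_1}^*, {L_2}^*, {L_3}^*)\in\mathbb{X}$ solves \eqref{eq61}, there exist $z_1, z_2, z_3 \in C([0,S],\mathbb{R})$ (depending respectively on ${L_1}^*, {L_2}^*, {L_3}^*$) with $|z_j(s)| < r_j$ for all $s\in\mathbb{J}$ such that the triple satisfies the perturbed system of item~$(ii)$ of that remark. Next I would invert this perturbed system in the same way that \eqref{eq4444} was turned into \eqref{eq46}--\eqref{eq477}: applying the fractal-fractional Atangana--Baleanu integral ${}^{\mathbf{FFML}}\mathcal{I}_{0,s}^{(\theta,\sigma)}$ to each of the three equations and using the initial data ${L_j}^*(0)=L_{j,0}$, I obtain that ${L_j}^*(s)$ equals the ``unperturbed'' expression appearing inside the large parentheses of \eqref{eq53}--\eqref{eq55} plus the two extra terms $\frac{(1-\theta)\sigma s^{\sigma-1}}{\mathcal{AB}(\theta)}z_j(s)$ and $\frac{\theta\sigma}{\mathcal{AB}(\theta)\Gamma(\theta)}\int_0^s \mathfrak{w}^{\sigma-1}(s-\mathfrak{w})^{\theta-1}z_j(\mathfrak{w})\,\mathrm{d}\mathfrak{w}$, for $j=1,2,3$.

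Subtracting and applying the triangle inequality, $\big|{L_j}^*(s) - (\text{unperturbed expression})\big|$ is bounded by $\frac{(1-\theta)\sigma s^{\sigma-1}}{\mathcal{AB}(\theta)}|z_j(s)| + \frac{\theta\sigma}{\mathcal{AB}(\theta)\Gamma(\theta)}\int_0^s \mathfrak{w}^{\sigma-1}(s-\mathfrak{w})^{\theta-1}|z_j(\mathfrak{w})|\,\mathrm{d}\mathfrak{w}$. I then use $|z_j|<r_j$, the bounds $s^{\sigma-1}\le S^{\sigma-1}$ and $s^{\theta+\sigma-1}\le S^{\theta+\sigma-1}$ on $\mathbb{J}=[0,S]$, and the Beta-integral identity $\int_0^s \mathfrak{w}^{\sigma-1}(s-\mathfrak{w})^{\theta-1}\,\mathrm{d}\mathfrak{w} = s^{\theta+\sigma-1}B(\theta,\sigma) = s^{\theta+\sigma-1}\Gamma(\theta)\Gamma(\sigma)/\Gamma(\theta+\sigma)$ --- all already used in the derivation of \eqref{eq410} --- to collapse the right-hand side to $\big[\frac{(1-\theta)\sigma S^{\sigma-1}}{\mathcal{AB}(\theta)} + \frac{\theta\sigma S^{\theta+\sigma-1}\Gamma(\sigma)}{\mathcal{AB}(\theta)\Gamma(\theta+\sigma)}\big]r_j$. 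Taking $j=1,2,3$ gives \eqref{eq53}, \eqref{eq54}, and \eqref{eq55}, respectively.

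There is no genuine obstacle here; the only point needing a little care is the legitimacy of the inversion step, namely that ${}^{\mathbf{FFML}}\mathcal{I}_{0,s}^{(\theta,\sigma)}$ applied to ${}^{\mathbf{FFML}}\mathcal{D}_{0,s}^{(\theta,\sigma)}{L_j}^*$ returns ${L_j}^*(s)-L_{j,0}$, i.e. that the perturbed equation can be integrated termwise just as the unperturbed one was in Section~\ref{sec:3}. This rests on the continuity of $z_j$ and of $\mathbb{Q}_j$ together with the fundamental relation between the FF-derivative \eqref{eq21} and the FF-integral \eqref{eq22}; once that is granted, the remainder is the routine estimation described above.
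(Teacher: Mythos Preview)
Your proposal is correct and follows essentially the same route as the paper: invoke Remark~\ref{Rem45} to introduce the perturbation functions $z_j$, pass to the integral representation via the fractal-fractional integral, isolate the $z_j$-terms by the triangle inequality, and bound them using $|z_j|<r_j$ together with the Beta-integral evaluation. The paper carries out the computation for $j=1$ and declares the other two cases similar, just as you do.
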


\begin{proof}
Let $ r_1>0$ be arbitrary. Since ${L_1}^*\in \mathbb{C} $ satisfies
$$
\Big\vert {}^{\mathbf{FFML}}\mathcal{D}_{0,s}^{(\theta,\sigma)}{L_1}^*(s)
- \mathbb{Q}_1 \left(s, {L_1}^*(s), {L_2}^*(s), {L_3}^*(s) \right)
\Big\vert < r_1,
$$
it follows from Remark~\ref{Rem45} that one can take a function $ z_1(s) $ such that
$$
{}^{\mathbf{FFML}}\mathcal{D}_{0,s}^{(\theta,\sigma)}{L_1}^*(s)
= \mathbb{Q}_1 \left(s, {L_1}^*(s), {L_2}^*(s), {L_3}^*(s) \right)
+ z_1(s)
$$
and $ \vert z_1(s) \vert\leq r_1 $. Clearly,
\begin{align*}
{L_1}^*(s) &= L_{1,0}
+ \frac{(1-\theta)\sigma s^{\sigma-1}}{\mathcal{AB}(\theta)}
\left[ \mathbb{Q}_1(s, {L_1}^*(s), {L_2}^*(s),
{L_3}^*(s)) + z_1(s) \right] \\[0.3cm]
&\quad +\frac{\theta\sigma}{\mathcal{AB}(\theta) \Gamma(\theta)}
\int_0^s \mathfrak{w}^{\sigma -1} (s- \mathfrak{w})^{\theta-1}
\Big[ \mathbb{Q}_1(\mathfrak{w}, {L_1}^*(\mathfrak{w}),
{L_2}^*(\mathfrak{w}), {L_3}^*(\mathfrak{w}))
+ z_1(\mathfrak{w}) \Big]\, \mathrm{d}\mathfrak{w}.
\end{align*}
In this case, we estimate
\begin{align*}
\Bigg\vert {L_1}^*(s) &- \Big( L_{1,0}
+ \frac{(1-\theta)\sigma s^{\sigma-1}}{\mathcal{AB}(\theta)}
\mathbb{Q}_1(s, {L_1}^*(s), {L_2}^*(s), {L_3}^*(s))\\[0.3cm]
&\qquad + \frac{\theta\sigma}{\mathcal{AB}(\theta) \Gamma(\theta)}
\int_0^s \mathfrak{w}^{\sigma -1} (s- \mathfrak{w})^{\theta-1}
\mathbb{Q}_1(\mathfrak{w}, {L_1}^*(\mathfrak{w}),
{L_2}^*(\mathfrak{w}), {L_3}^*(\mathfrak{w}))
\, \mathrm{d}\mathfrak{w} \Big) \Bigg\vert\\[0.3cm]
&\leq \frac{(1-\theta)\sigma s^{\sigma-1}}{\mathcal{AB}(\theta)}
\vert z_1(s)\vert  + \frac{\theta\sigma}{\mathcal{AB}(\theta)
\Gamma(\theta)} \int_0^s \mathfrak{w}^{\sigma -1}
(s- \mathfrak{w})^{\theta-1} \vert z_1(\mathfrak{w})\vert
\, \mathrm{d}\mathfrak{w}\\[0.3cm]
&\leq \frac{(1-\theta)\sigma S^{\sigma-1}}{\mathcal{AB}(\theta)}r_1
+ \frac{\theta\sigma S^{\theta+\sigma - 1} \Gamma(\sigma) }{
\mathcal{AB}(\theta) \Gamma(\theta + \sigma)} r_1\\[0.3cm]
&= \left[  \frac{(1-\theta)\sigma S^{\sigma-1}}{\mathcal{AB}(\theta)}
+ \frac{\theta\sigma S^{\theta+\sigma - 1}
\Gamma(\sigma) }{\mathcal{AB}(\theta) \Gamma(\theta + \sigma)} \right]r_1.
\end{align*}
This means that \eqref{eq53} is fulfilled.
We prove \eqref{eq54} and \eqref{eq55}
in a similar way.
\end{proof}

To prove our next result (see Lemma~\ref{lem58}),
we consider the following condition:
\begin{enumerate}
\item[(C2)] there exists increasing mappings
$\hbar_j\in C([0,S],\mathbb{R}^+)$, $j\in \{ 1,2,3\}$,
and $\Delta_{\hbar_j}>0$, provided that
\begin{align}\label{lambda2}
{}^{\mathbf{FFML}}\mathcal{I}_{0,s}^{(\theta,\sigma)} \hbar_j(s)
< \Delta_{\hbar_j} \hbar_j(s),
~(j\in \{ 1,2,3\}), \forall\,s\in \mathbb{J}.
\end{align}
\end{enumerate}

\begin{Lemma}
\label{lem58}
Let $(C2)$ hold. For each $r_1,r_2,r_3>0$, suppose that
$\left({L_1}^*, {L_2}^*, {L_3}^*\right)\in \mathbb{X}$
is a solution of \eqref{eq62}. Then, functions
${L_1}^*, {L_2}^*, {L_3}^* \in \mathbb{C}$
fulfill the following three inequalities:
\begin{align}
\label{eq57}
&\Big\vert {L_1}^*(s) - \left( L_{1,0}
+{}^{\mathbf{FFML}}\mathcal{I}_{0,s}^{(\theta,\sigma)}
\mathbb{Q}_1(s, {L_1}^*(s), {L_2}^*(s), {L_3}^*(s)) \right)
\Big\vert \leq r_1 \Delta_{\hbar_1} \hbar_1 (s), \\[0.3cm]
&\Big\vert {L_2}^*(s) - \left( L_{2,0}
+ {}^{\mathbf{FFML}}\mathcal{I}_{0,s}^{(\theta,\sigma)}
\mathbb{Q}_2(s, {L_1}^*(s), {L_2}^*(s), {L_3}^*(s)) \right)
\Big\vert \leq r_2 \Delta_{\hbar_2} \hbar_2 (s), \\[0.3cm]
&\Big\vert {L_3}^*(s) - \left(L_{3,0}
+{}^{\mathbf{FFML}}\mathcal{I}_{0,s}^{(\theta,\sigma)}
\mathbb{Q}_3(s, {L_1}^*(s), {L_2}^*(s), {L_3}^*(s))\right)
\Big\vert \leq r_3 \Delta_{\hbar_3} \hbar_3 (s).
\end{align}
\end{Lemma}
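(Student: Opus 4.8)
\textbf{Proof proposal for Lemma~\ref{lem58}.}
The plan is to mirror the argument of Lemma~\ref{lem57}, with the constant bound $r_j$ replaced by the weight function $r_j \hbar_j(s)$ and the elementary estimate of the fractal-fractional integral replaced by the hypothesis (C2). First I would fix $r_1 > 0$ and apply Remark~\ref{Rem46}: since $\left({L_1}^*, {L_2}^*, {L_3}^*\right)$ solves \eqref{eq62}, there is a function $z_1 \in C([0,S],\mathbb{R})$ with $\vert z_1(s)\vert < r_1 \hbar_1(s)$ for all $s \in \mathbb{J}$ and
\begin{equation*}
{}^{\mathbf{FFML}}\mathcal{D}_{0,s}^{(\theta,\sigma)}{L_1}^*(s)
= \mathbb{Q}_1 \left(s, {L_1}^*(s), {L_2}^*(s), {L_3}^*(s) \right) + z_1(s).
\end{equation*}
Applying the fractal-fractional integral ${}^{\mathbf{FFML}}\mathcal{I}_{0,s}^{(\theta,\sigma)}$ to both sides (exactly as in the passage from \eqref{eq4444} to \eqref{eq46}) and using the initial condition ${L_1}^*(0) = L_{1,0}$, one obtains
\begin{equation*}
{L_1}^*(s) = L_{1,0}
+ {}^{\mathbf{FFML}}\mathcal{I}_{0,s}^{(\theta,\sigma)} \mathbb{Q}_1(s, {L_1}^*(s), {L_2}^*(s), {L_3}^*(s))
+ {}^{\mathbf{FFML}}\mathcal{I}_{0,s}^{(\theta,\sigma)} z_1(s).
\end{equation*}

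Next I would move the first two terms to the left, take absolute values, and bound the remaining term: by the triangle inequality for the integral operator \eqref{eq22} and the nonnegativity of its kernels,
\begin{equation*}
\Big\vert {L_1}^*(s) - \left( L_{1,0}
+ {}^{\mathbf{FFML}}\mathcal{I}_{0,s}^{(\theta,\sigma)} \mathbb{Q}_1(s, {L_1}^*(s), {L_2}^*(s), {L_3}^*(s)) \right) \Big\vert
\leq {}^{\mathbf{FFML}}\mathcal{I}_{0,s}^{(\theta,\sigma)} \vert z_1(s)\vert
\leq {}^{\mathbf{FFML}}\mathcal{I}_{0,s}^{(\theta,\sigma)} \big( r_1 \hbar_1(s) \big).
\end{equation*}
Since $r_1$ is a constant and the operator is linear, the last expression equals $r_1\, {}^{\mathbf{FFML}}\mathcal{I}_{0,s}^{(\theta,\sigma)} \hbar_1(s)$, which by condition (C2) is strictly less than $r_1 \Delta_{\hbar_1} \hbar_1(s)$. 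This establishes \eqref{eq57}. The estimates for ${L_2}^*$ and ${L_3}^*$ follow verbatim, using $z_2, z_3$ from Remark~\ref{Rem46}, the bounds $\vert z_j(s)\vert < r_j \hbar_j(s)$, and the corresponding instances of (C2).

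I do not anticipate a genuine obstacle here; the lemma is essentially a bookkeeping step. The only point requiring minor care is the justification that applying ${}^{\mathbf{FFML}}\mathcal{I}_{0,s}^{(\theta,\sigma)}$ inverts ${}^{\mathbf{FFML}}\mathcal{D}_{0,s}^{(\theta,\sigma)}$ together with the initial data — but this is precisely the computation already carried out in Section~\ref{sec:3} (equations \eqref{eq4444}--\eqref{eq46}), so it may simply be invoked. A second small point is that (C2) is stated with a strict inequality, so the conclusions \eqref{eq57} are likewise strict (or one weakens to $\leq$ without loss); I would keep the $\leq$ form as written in the statement, which follows a fortiori. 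The monotonicity of $\hbar_j$ is not actually needed for this particular estimate, but it is part of (C2) and will be used later in Theorem~\ref{thm48}.
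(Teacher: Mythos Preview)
Your proposal is correct and follows essentially the same route as the paper: invoke Remark~\ref{Rem46} to produce $z_j$ with $\vert z_j(s)\vert \leq r_j\hbar_j(s)$, apply the fractal-fractional integral to obtain the explicit representation of ${L_j}^*(s)$, isolate ${}^{\mathbf{FFML}}\mathcal{I}_{0,s}^{(\theta,\sigma)} z_j(s)$, and then bound it via (C2). Your side observations about the strict-versus-nonstrict inequality and the irrelevance of the monotonicity of $\hbar_j$ at this stage are accurate refinements that the paper does not spell out.
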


\begin{proof}
Let $ r_1>0$. Since ${L_1}^*\in \mathbb{C} $ satisfies
$$
\Big\vert {}^{\mathbf{FFML}}\mathcal{D}_{0,s}^{(\theta,\sigma)}{L_1}^*(s)
- \mathbb{Q}_1 \left(s, {L_1}^*(s), {L_2}^*(s), {L_3}^*(s) \right)
\Big\vert < r_1\hbar_1(s),
$$
it follows from Remark~\ref{Rem46} that we can take $z_1(s)$ such that
$$
{}^{\mathbf{FFML}}\mathcal{D}_{0,s}^{(\theta,\sigma)}{L_1}^*(s)
= \mathbb{Q}_1 \big(s, {L_1}^*(s), {L_2}^*(s), {L_3}^*(s) \big) + z_1(s)
$$
and $ \vert z_1(s) \vert\leq r_1\hbar_1(s) $. Evidently,
\begin{align*}
{L_1}^*(s)
&= L_{1,0}
+\frac{(1-\theta)\sigma s^{\sigma-1}}{\mathcal{AB}(\theta)}
\left[ \mathbb{Q}_1(s, {L_1}^*(s), {L_2}^*(s),
{L_3}^*(s)) + z_1(s) \right] \\[0.3cm]
&\quad +\frac{\theta\sigma}{\mathcal{AB}(\theta) \Gamma(\theta)}
\int_0^s \mathfrak{w}^{\sigma -1} (s- \mathfrak{w})^{\theta-1}
\left[ \mathbb{Q}_1(\mathfrak{w}, {L_1}^*(\mathfrak{w}),
{L_2}^*(\mathfrak{w}), {L_3}^*(\mathfrak{w}))
+ z_1(\mathfrak{w}) \right]\, \mathrm{d}\mathfrak{w}.
\end{align*}
Then, we estimate
\begin{align*}
\Big\vert {L_1}^*(s) - &\left( L_{1,0} +
{}^{\mathbf{FFML}}\mathcal{I}_{0,s}^{(\theta,\sigma)}
\mathbb{Q}_1(s, {L_1}^*(s), {L_2}^*(s),
{L_3}^*(s)) \right) \Big\vert\\[0.3cm]
&= \Bigg\vert {L_1}^*(s) - \Big( L_{1,0}
+\frac{(1-\theta)\sigma s^{\sigma-1}}{\mathcal{AB}(\theta)}
\mathbb{Q}_1(s, {L_1}^*(s), {L_2}^*(s), {L_3}^*(s))\\[0.3cm]
&\qquad + \frac{\theta\sigma}{\mathcal{AB}(\theta) \Gamma(\theta)}
\int_0^s \mathfrak{w}^{\sigma -1} (s- \mathfrak{w})^{\theta-1}
\mathbb{Q}_1(\mathfrak{w}, {L_1}^*(\mathfrak{w}),
{L_2}^*(\mathfrak{w}), {L_3}^*(\mathfrak{w}))
\, \mathrm{d}\mathfrak{w} \Big) \Bigg\vert \\[0.3cm]
&\leq \frac{(1-\theta)\sigma s^{\sigma-1}}{\mathcal{AB}(\theta)}
\vert z_1(s)\vert  + \frac{\theta\sigma}{\mathcal{AB}(\theta)
\Gamma(\theta)} \int_0^s \mathfrak{w}^{\sigma -1} (s- \mathfrak{w})^{\theta-1}
\vert z_1(\mathfrak{w})\vert \, \mathrm{d}\mathfrak{w}\\[0.3cm]
&= {}^{\mathbf{FFML}}\mathcal{I}_{0,s}^{(\theta,\sigma)} \vert z_1(s)\vert\\[0.3cm]
&\leq {}^{\mathbf{FFML}}\mathcal{I}_{0,s}^{(\theta,\sigma)} r_1\hbar_1(s)\\[0.3cm]
&\leq r_1 \Delta_{\hbar_1} \hbar_1(s).
\end{align*}
We prove the remaining inequalities in a similar way.
\end{proof}

We are now in a position to investigate the Ulam--Hyers stability
for the FF-model of polluted lake system \eqref{model2}.

\begin{Theorem}
\label{thm47}
Assume (C1) holds. Then our polluted lake system
\eqref{model2} is both Ulam--Hyers and generalized Ulam--Hyers stable with
\begin{equation*}
\left[ \frac{(1-\theta)\sigma S^{\sigma-1}}{\mathcal{AB}(\theta)}
+  \frac{\theta\sigma S^{\theta+\sigma - 1} \Gamma(\sigma) }{\mathcal{AB}(\theta)
\Gamma(\theta + \sigma)} \right] \alpha_j < 1,
\quad j\in \{ 1,2,3\},
\end{equation*}
in which $\alpha_1, \alpha_2, \alpha_3 > 0$ are given by \eqref{eq51}.
\end{Theorem}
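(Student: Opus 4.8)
The plan is to combine the integral representation \eqref{eq477} of a genuine solution with the perturbation bounds of Lemma~\ref{lem57} and the component-wise Lipschitz estimates of Lemma~\ref{lem51}, and then to close the argument by a single triangle inequality together with the smallness hypothesis. Throughout, it is convenient to abbreviate
\[
\Lambda := \frac{(1-\theta)\sigma S^{\sigma-1}}{\mathcal{AB}(\theta)}
+ \frac{\theta\sigma S^{\theta+\sigma-1}\Gamma(\sigma)}{\mathcal{AB}(\theta)\Gamma(\theta+\sigma)},
\]
so that the hypothesis of the theorem reads $\Lambda\,\alpha_j<1$ for every $j\in\{1,2,3\}$.

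First I would fix $r_1,r_2,r_3>0$ and an approximate triplet $\big({L_1}^*,{L_2}^*,{L_3}^*\big)\in\mathbb{X}$ satisfying \eqref{eq61}, and let $\big({L_1},{L_2},{L_3}\big)\in\mathbb{X}$ be the solution of \eqref{model2} subject to the same initial data \eqref{model2:IC} furnished by Theorems~\ref{th33} and \ref{thm52}; in particular it obeys the integral identities \eqref{eq477}. For the first component, the triangle inequality gives, for each $s\in\mathbb{J}$,
\[
\big|{L_1}^*(s)-{L_1}(s)\big| \le \mathcal{E}_1(s)+\mathcal{D}_1(s),
\]
where $\mathcal{E}_1(s)$ is the distance between ${L_1}^*(s)$ and the right-hand side of the first line of \eqref{eq477} evaluated along $\big({L_1}^*,{L_2}^*,{L_3}^*\big)$, and $\mathcal{D}_1(s)$ is the distance between that expression and the one obtained by evaluating the same right-hand side along $\big({L_1},{L_2},{L_3}\big)$. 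By Lemma~\ref{lem57}, $\mathcal{E}_1(s)\le\Lambda r_1$; and, reasoning exactly as in the proof of Theorem~\ref{thm52} (insert the kernel form \eqref{eq47} and apply the Lipschitz bound for $\mathbb{Q}_1$ from Lemma~\ref{lem51}), one gets $\mathcal{D}_1(s)\le\Lambda\,\alpha_1\,\Vert{L_1}^*-{L_1}\Vert$.

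Taking the supremum over $s\in\mathbb{J}$ and using $1-\Lambda\alpha_1>0$, I would deduce
\[
\Vert{L_1}^*-{L_1}\Vert \le \frac{\Lambda}{1-\Lambda\alpha_1}\,r_1 =: a_{\mathbb{Q}_1}r_1,
\]
and, running the identical computation for the indices $2$ and $3$, $\Vert{L_j}^*-{L_j}\Vert\le a_{\mathbb{Q}_j}r_j$ with $a_{\mathbb{Q}_j}=\Lambda/(1-\Lambda\alpha_j)$; this is exactly Ulam--Hyers stability in the sense of Definition~\ref{def41}. Choosing then $a_{\mathbb{Q}_j}(r):=a_{\mathbb{Q}_j}\,r$, a continuous nondecreasing function vanishing at the origin, yields the generalized Ulam--Hyers stability of Definition~\ref{def42}. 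The only step that requires genuine care --- and the one I would single out as the main obstacle --- is controlling the coupling: since $\mathbb{Q}_1$ also involves $L_3$, the clean estimate $\mathcal{D}_1(s)\le\Lambda\alpha_1\Vert{L_1}^*-{L_1}\Vert$ has to be extracted by invoking the precise component-wise Lipschitz constants of Lemma~\ref{lem51} and treating one state variable at a time, exactly as was done in the uniqueness argument; everything else is routine bookkeeping, with the condition $\Lambda\alpha_j<1$ being precisely what makes the self-referential inequality collapse.
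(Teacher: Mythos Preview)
Your proposal is correct and follows essentially the same route as the paper: split $|{L_1}^*(s)-{L_1}(s)|$ via the triangle inequality into the perturbation term controlled by Lemma~\ref{lem57} and the Lipschitz term controlled by Lemma~\ref{lem51}, then invoke $\Lambda\alpha_j<1$ to absorb the self-referential piece and obtain $a_{\mathbb{Q}_j}=\Lambda/(1-\Lambda\alpha_j)$, with the linear choice $a_{\mathbb{Q}_j}(r)=a_{\mathbb{Q}_j}r$ giving the generalized version. Your flagging of the coupling issue (that $\mathbb{Q}_1$ also involves $L_3$) is apt---the paper handles it in the same implicit, component-wise fashion as in the uniqueness proof.
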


\begin{proof}
Let $ r_1>0$ and ${L_1}^*\in \mathbb{C} $ be an arbitrary solution of \eqref{eq61}.
By Theorem~\ref{thm52}, let $ {L_1} \in \mathbb{C} $ be the unique solution
of the FF polluted lake system \eqref{model2}. Then $ {L_1}(s) $ is defined as
\begin{align*}
{L_1}(s)
&= L_{1,0} + \frac{(1-\theta)\sigma s^{\sigma-1}}{\mathcal{AB}(\theta)}
\mathbb{Q}_1(s, {L_1}(s), {L_2}(s), {L_3}(s))\\[0.3cm]
&\quad +\frac{\theta\sigma}{\mathcal{AB}(\theta) \Gamma(\theta)}
\int_0^s \mathfrak{w}^{\sigma -1} (s- \mathfrak{w})^{\theta-1}
\mathbb{Q}_1(\mathfrak{w}, {L_1}(\mathfrak{w}), {L_2}(\mathfrak{w}),
{L_3}(\mathfrak{w}))\, \mathrm{d}\mathfrak{w}.
\end{align*}
From the triangle inequality, Lemma~\ref{lem57} gives
\begin{align*}
\big\vert &{L_1}^*(s)
- {L_1}(s) \big\vert \leq \Big\vert {L_1}^*(s)-L_{1,0}
-\frac{(1-\theta)\sigma s^{\sigma-1}}{\mathcal{AB}(\theta)}
\mathbb{Q}_1(s, {L_1}(s), {L_2}(s), {L_3}(s))\\[0.3cm]
&\quad -\frac{\theta\sigma}{\mathcal{AB}(\theta) \Gamma(\theta)}
\int_0^s \mathfrak{w}^{\sigma -1} (s- \mathfrak{w})^{\theta-1}
\mathbb{Q}_1(\mathfrak{w}, {L_1}(\mathfrak{w}), {L_2}(\mathfrak{w}),
{L_3}(\mathfrak{w}))\, \mathrm{d}\mathfrak{w} \Big\vert\\[0.3cm]
&\leq \Bigg\vert {L_1}^*(s) - \Big( L_{1,0}
+\frac{(1-\theta)\sigma s^{\sigma-1}}{\mathcal{AB}(\theta)}
\mathbb{Q}_1(s, {L_1}^*(s), {L_2}^*(s), {L_3}^*(s))\\[0.3cm]
&\quad + \frac{\theta\sigma}{\mathcal{AB}(\theta) \Gamma(\theta)}
\int_0^s \mathfrak{w}^{\sigma -1} (s- \mathfrak{w})^{\theta-1}
\mathbb{Q}_1(\mathfrak{w}, {L_1}^*(\mathfrak{w}),
{L_2}^*(\mathfrak{w}), {L_3}^*(\mathfrak{w}))
\, \mathrm{d}\mathfrak{w} \Big) \Bigg\vert\\[0.3cm]
&\quad + \frac{(1-\theta)\sigma s^{\sigma-1}}{\mathcal{AB}(\theta)}
\big\vert \mathbb{Q}_1(s, {L_1}^*(s), {L_2}^*(s),
{L_3}^*(s)) - \mathbb{Q}_1(s, {L_1}(s), {L_2}(s),
{L_3}(s)) \big\vert\\[0.3cm]
&\quad + \frac{\theta\sigma}{\mathcal{AB}(\theta) \Gamma(\theta)}
\int_0^s \mathfrak{w}^{\sigma -1} (s- \mathfrak{w})^{\theta-1}
\big\vert \mathbb{Q}_1(\mathfrak{w}, {L_1}^*(\mathfrak{w}),
{L_2}^*(\mathfrak{w}), {L_3}^*(\mathfrak{w}))\\
&\quad -  \mathbb{Q}_1(\mathfrak{w}, {L_1}(\mathfrak{w}),
{L_2}(\mathfrak{w}), {L_3}(\mathfrak{w}))
\big\vert \,\mathrm{d}\mathfrak{w}\\[0.3cm]
&\leq \left[  \frac{(1-\theta)\sigma S^{\sigma-1}}{\mathcal{AB}(\theta)}
+ \frac{\theta\sigma S^{\theta+\sigma - 1} \Gamma(\sigma) }{
\mathcal{AB}(\theta) \Gamma(\theta + \sigma)} \right]r_1
+ \frac{(1-\theta)\sigma S^{\sigma-1}}{\mathcal{AB}(\theta)}
\alpha_1 \Vert {L_1}^* - {L_1} \Vert \\[0.3cm]
&\quad + \frac{\theta\sigma S^{\theta+\sigma - 1} \Gamma(\sigma) }{
\mathcal{AB}(\theta) \Gamma(\theta + \sigma)} \alpha_1 \Vert {L_1}^*
- {L_1} \Vert \\[0.3cm]
&\leq \left[  \frac{(1-\theta)\sigma S^{\sigma-1}}{\mathcal{AB}(\theta)}
+  \frac{\theta\sigma S^{\theta+\sigma - 1} \Gamma(\sigma) }{
\mathcal{AB}(\theta) \Gamma(\theta + \sigma)} \right]  r_1
+ \left[  \frac{(1-\theta)\sigma S^{\sigma-1}}{\mathcal{AB}(\theta)}
+  \frac{\theta\sigma S^{\theta+\sigma - 1} \Gamma(\sigma) }{\mathcal{AB}(\theta)
\Gamma(\theta + \sigma)} \right] \alpha_1 \Vert {L_1}-{L_1}^*\Vert.
\end{align*}
Hence,
\begin{equation*}
\Vert {L_1}^* - {L_1} \Vert
\leq \frac{\left[  \dfrac{(1-\theta)\sigma S^{\sigma-1}}{\mathcal{AB}(\theta)}
+  \dfrac{\theta\sigma S^{\theta+\sigma - 1} \Gamma(\sigma) }{\mathcal{AB}(\theta)
\Gamma(\theta + \sigma)} \right]  r_1 }{1
- \left[  \dfrac{(1-\theta)\sigma S^{\sigma-1}}{\mathcal{AB}(\theta)}
+  \dfrac{\theta\sigma S^{\theta+\sigma - 1} \Gamma(\sigma) }{\mathcal{AB}(\theta)
\Gamma(\theta + \sigma)} \right] \alpha_1 }.
\end{equation*}
Set $a_{\mathbb{Q}_1}= \dfrac{\left[
\dfrac{(1-\theta)\sigma S^{\sigma-1}}{\mathcal{AB}(\theta)}
+  \dfrac{\theta\sigma S^{\theta+\sigma - 1} \Gamma(\sigma) }{
\mathcal{AB}(\theta) \Gamma(\theta + \sigma)} \right] }{1
- \left[  \dfrac{(1-\theta)\sigma S^{\sigma-1}}{\mathcal{AB}(\theta)}
+  \dfrac{\theta\sigma S^{\theta+\sigma - 1} \Gamma(\sigma) }{\mathcal{AB}(\theta)
\Gamma(\theta + \sigma)} \right] \alpha_1 }$. In this case,
$\Vert {L_1}^* - {L_1} \Vert  \leq  a_{\mathbb{Q}_1}r_1$.
Similarly, we obtain
$$
\Vert {L_2}^* - {L_2} \Vert
\leq  a_{\mathbb{Q}_2}r_2,
$$
$$
\Vert {L_3}^* - {L_3} \Vert
\leq  a_{\mathbb{Q}_3}r_3,
$$
where
$$
a_{\mathbb{Q}_j}= \dfrac{\left[\dfrac{(1-\theta)
\sigma S^{\sigma-1}}{\mathcal{AB}(\theta)}
+  \dfrac{\theta\sigma S^{\theta+\sigma - 1} \Gamma(\sigma) }{\mathcal{AB}(\theta)
\Gamma(\theta + \sigma)} \right]}{1- \left[  \dfrac{(1-\theta)\sigma
S^{\sigma-1}}{\mathcal{AB}(\theta)}  +  \dfrac{\theta\sigma
S^{\theta+\sigma - 1} \Gamma(\sigma) }{\mathcal{AB}(\theta)
\Gamma(\theta + \sigma)} \right] \alpha_j },
\quad j\in \{ 2,3\}.
$$
We conclude that the FF-model of polluted lake system \eqref{model2}
is Ulam--Hyers stable. On the other hand, if we take
$$
a_{\mathbb{Q}_j}(r_j)
= \dfrac{\left[  \dfrac{(1-\theta)\sigma S^{\sigma-1}}{\mathcal{AB}(\theta)}
+ \dfrac{\theta\sigma S^{\theta+\sigma - 1} \Gamma(\sigma) }{\mathcal{AB}(\theta)
\Gamma(\theta + \sigma)} \right]  r_j }{1- \left[
\dfrac{(1-\theta)\sigma S^{\sigma-1}}{\mathcal{AB}(\theta)}
+ \dfrac{\theta\sigma S^{\theta+\sigma - 1} \Gamma(\sigma) }{\mathcal{AB}(\theta)
\Gamma(\theta + \sigma)} \right] \alpha_j },
\quad j\in \{ 1,2,3\},
$$
then $a_{\mathbb{Q}_j}(0)=0$ and the proof is finished:
\eqref{model2} is generalized Ulam--Hyers stable.
\end{proof}

Theorem~\ref{thm48} establishes Ulam--Hyers-Rassias stability
for the fractal-fractional polluted lake system \eqref{model2}.

\begin{Theorem}
\label{thm48}
If (C1) and (C2) hold, then the FF-model of polluted lake system \eqref{model2} is
simultaneously stable in the sense of Definitions~\ref{def41} and \ref{def42}.
\end{Theorem}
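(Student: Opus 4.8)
The plan is to replicate the argument of Theorem~\ref{thm47}, but with the constant defect estimates of Lemma~\ref{lem57} replaced by the $\hbar_j$-weighted estimates of Lemma~\ref{lem58}, so that condition (C2) enters the bound through the constants $\Delta_{\hbar_j}$. Fix $r_1>0$ and let $\left({L_1}^*,{L_2}^*,{L_3}^*\right)\in\mathbb{X}$ be an arbitrary solution of the perturbed system \eqref{eq62}. By Theorem~\ref{thm52}, the FF polluted lake system \eqref{model2} admits a unique solution $\left({L_1},{L_2},{L_3}\right)\in\mathbb{X}$, and its first component $L_1$ is given by the integral representation in \eqref{eq477}.

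First I would insert the reconstructed value $L_{1,0}+{}^{\mathbf{FFML}}\mathcal{I}_{0,s}^{(\theta,\sigma)}\mathbb{Q}_1(s,{L_1}^*(s),{L_2}^*(s),{L_3}^*(s))$ and apply the triangle inequality, so that $\vert{L_1}^*(s)-{L_1}(s)\vert$ splits into two pieces: the defect of ${L_1}^*$ from its own Volterra representation, which by Lemma~\ref{lem58} is at most $r_1\Delta_{\hbar_1}\hbar_1(s)$; and the term coming from $\mathbb{Q}_1(s,{L_1}^*,{L_2}^*,{L_3}^*)-\mathbb{Q}_1(s,{L_1},{L_2},{L_3})$ integrated against the fractal-fractional kernel. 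For the second piece, I would apply the Lipschitz bound of Lemma~\ref{lem51} with constant $\alpha_1$ and evaluate the Beta-type integral exactly as in \eqref{eq410}, producing the factor $\left[\frac{(1-\theta)\sigma S^{\sigma-1}}{\mathcal{AB}(\theta)}+\frac{\theta\sigma S^{\theta+\sigma-1}\Gamma(\sigma)}{\mathcal{AB}(\theta)\Gamma(\theta+\sigma)}\right]\alpha_1\Vert{L_1}^*-{L_1}\Vert$.

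Collecting the two contributions and using the smallness hypothesis $\left[\frac{(1-\theta)\sigma S^{\sigma-1}}{\mathcal{AB}(\theta)}+\frac{\theta\sigma S^{\theta+\sigma-1}\Gamma(\sigma)}{\mathcal{AB}(\theta)\Gamma(\theta+\sigma)}\right]\alpha_1<1$ (as in Theorems~\ref{thm52} and \ref{thm47}), I can solve for $\Vert{L_1}^*-{L_1}\Vert$ and return to the pointwise inequality to conclude $\vert{L_1}^*(s)-{L_1}(s)\vert\le r_1\,a_{(\mathbb{Q}_1,\hbar_1)}\hbar_1(s)$ with
$$
a_{(\mathbb{Q}_1,\hbar_1)}=\frac{\Delta_{\hbar_1}}{1-\left[\frac{(1-\theta)\sigma S^{\sigma-1}}{\mathcal{AB}(\theta)}+\frac{\theta\sigma S^{\theta+\sigma-1}\Gamma(\sigma)}{\mathcal{AB}(\theta)\Gamma(\theta+\sigma)}\right]\alpha_1}.
$$
Repeating the computation verbatim for the second and third components, with $\alpha_2,\hbar_2,\Delta_{\hbar_2}$ and $\alpha_3,\hbar_3,\Delta_{\hbar_3}$, yields the analogous constants $a_{(\mathbb{Q}_2,\hbar_2)}$ and $a_{(\mathbb{Q}_3,\hbar_3)}$; this establishes Ulam--Hyers--Rassias stability in the sense of Definition~\ref{def43}. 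Setting $a_{(\mathbb{Q}_j,\hbar_j)}(r_j)=r_j\,a_{(\mathbb{Q}_j,\hbar_j)}$, which vanishes at $r_j=0$, upgrades the conclusion to the generalized Ulam--Hyers--Rassias stability of Definition~\ref{def44}.

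The point requiring care is to propagate the $\hbar_j(s)$-weight through the estimate rather than crudely bounding $\hbar_j$ by its supremum on $[0,S]$: this is exactly what condition (C2), together with the monotonicity of $\hbar_j$, makes possible, and it is what separates the Rassias-type conclusion here from the plain Ulam--Hyers one already obtained in Theorem~\ref{thm47}. All other ingredients — the continuity needed to apply ${}^{\mathbf{FFML}}\mathcal{I}_{0,s}^{(\theta,\sigma)}$, and the identity $B(\theta,\sigma)=\Gamma(\theta)\Gamma(\sigma)/\Gamma(\theta+\sigma)$ used to pass from $S^{\theta+\sigma-1}B(\theta,\sigma)/\Gamma(\theta)$ to $S^{\theta+\sigma-1}\Gamma(\sigma)/\Gamma(\theta+\sigma)$ — are routine and already appeared in the earlier proofs.
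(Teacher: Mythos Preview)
Your proposal is correct and follows essentially the same approach as the paper: triangle inequality, Lemma~\ref{lem58} for the defect term, the Lipschitz bound from Lemma~\ref{lem51} for the kernel difference, and the same resulting constant $a_{(\mathbb{Q}_j,\hbar_j)}=\Delta_{\hbar_j}\big/\big(1-[\cdots]\alpha_j\big)$. You also correctly read the theorem as concerning Definitions~\ref{def43} and~\ref{def44} (Ulam--Hyers--Rassias and its generalization), which is indeed what the paper's proof establishes despite the statement's reference to Definitions~\ref{def41} and~\ref{def42}; the only cosmetic difference is that the paper obtains the generalized version by setting $r_j=1$ rather than by defining $a_{(\mathbb{Q}_j,\hbar_j)}(r_j)=r_j\,a_{(\mathbb{Q}_j,\hbar_j)}$.
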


\begin{proof}
Let $r_1>0$, and ${L_1}^* \in \mathbb{C} $ satisfy \eqref{eq62}.
By Theorem~\ref{thm52}, let $ {L_1} \in \mathbb{C} $
be the (unique) solution of the FF polluted lake system model \eqref{model2}.
Then $ {L_1}(s) $ becomes
\begin{align*}
{L_1}(s)
&= L_{1,0} + \frac{(1-\theta)\sigma s^{\sigma-1}}{\mathcal{AB}(\theta)}
\mathbb{Q}_1(s, {L_1}(s), {L_2}(s), {L_3}(s)) \\[0.3cm]
&\quad +\frac{\theta\sigma}{\mathcal{AB}(\theta) \Gamma(\theta)}
\int_0^s \mathfrak{w}^{\sigma -1} (s- \mathfrak{w})^{\theta-1}
\mathbb{Q}_1(\mathfrak{w}, {L_1}(\mathfrak{w}),
{L_2}(\mathfrak{w}), {L_3}(\mathfrak{w}))
\, \mathrm{d}\mathfrak{w}.
\end{align*}
With the aid of the triangle inequality, Lemma~\ref{lem58} gives
\begin{align*}
\big\vert  {L_1}^*(s)
&- {L_1}(s) \big\vert \leq \Big\vert {L_1}^*(s)
- L_{1,0} - \frac{(1-\theta)\sigma s^{\sigma-1}}{\mathcal{AB}(\theta)}
\mathbb{Q}_1(s, {L_1}(s), {L_2}(s), {L_3}(s)) \\[0.3cm]
&\quad -\frac{\theta\sigma}{\mathcal{AB}(\theta) \Gamma(\theta)}
\int_0^s \mathfrak{w}^{\sigma -1} (s- \mathfrak{w})^{\theta-1}
\mathbb{Q}_1(\mathfrak{w}, {L_1}(\mathfrak{w}),
{L_2}(\mathfrak{w}), {L_3}(\mathfrak{w}))\,
\mathrm{d}\mathfrak{w} \Big\vert \\[0.3cm]
&\leq \Bigg\vert {L_1}^*(s) - \Big( L_{1,0}
+ \frac{(1-\theta)\sigma s^{\sigma-1}}{\mathcal{AB}(\theta)}
\mathbb{Q}_1(s, {L_1}^*(s), {L_2}^*(s), {L_3}^*(s))\\[0.3cm]
&\quad + \frac{\theta\sigma}{\mathcal{AB}(\theta) \Gamma(\theta)}
\int_0^s \mathfrak{w}^{\sigma -1} (s- \mathfrak{w})^{\theta-1}
\mathbb{Q}_1(\mathfrak{w}, {L_1}^*(\mathfrak{w}),
{L_2}^*(\mathfrak{w}), {L_3}^*(\mathfrak{w}))
\, \mathrm{d}\mathfrak{w} \Big) \Bigg\vert\\[0.3cm]
&\quad + \frac{(1-\theta)\sigma s^{\sigma-1}}{\mathcal{AB}(\theta)}
\big\vert \mathbb{Q}_1(s, {L_1}^*(s), {L_2}^*(s),
{L_3}^*(s)) - \mathbb{Q}_1(s, {L_1}(s),
{L_2}(s), {L_3}(s)) \big\vert\\[0.3cm]
&\quad + \frac{\theta\sigma}{\mathcal{AB}(\theta) \Gamma(\theta)}
\int_0^s \mathfrak{w}^{\sigma -1} (s- \mathfrak{w})^{\theta-1}
\big\vert \mathbb{Q}_1(\mathfrak{w}, {L_1}^*(\mathfrak{w}),
{L_2}^*(\mathfrak{w}), {L_3}^*(\mathfrak{w}))\\
&\quad - \mathbb{Q}_1(\mathfrak{w}, {L_1}(\mathfrak{w}),
{L_2}(\mathfrak{w}), {L_3}(\mathfrak{w}))
\big\vert \,\mathrm{d}\mathfrak{w}\\[0.3cm]
&\leq \Big\vert {L_1}^*(s) - \Big( L_{1,0}
+ {}^{\mathbf{FFML}}\mathcal{I}_{0,s}^{(\theta,\sigma)}
\mathbb{Q}_1(s, {L_1}^*(s), {L_2}^*(s),
{L_3}^*(s)) \Big) \Big\vert\\[0.3cm]
&\quad + \frac{(1-\theta)\sigma s^{\sigma-1}}{\mathcal{AB}(\theta)}
\big\vert \mathbb{Q}_1(s, {L_1}^*(s), {L_2}^*(s),
{L_3}^*(s)) - \mathbb{Q}_1(s, {L_1}(s),
{L_2}(s), {L_3}(s)) \big\vert\\[0.3cm]
&\quad + \frac{\theta\sigma}{\mathcal{AB}(\theta) \Gamma(\theta)}
\int_0^s \mathfrak{w}^{\sigma -1} (s- \mathfrak{w})^{\theta-1}
\big\vert \mathbb{Q}_1(\mathfrak{w}, {L_1}^*(\mathfrak{w}),
{L_2}^*(\mathfrak{w}), {L_3}^*(\mathfrak{w}))\\
&\quad - \mathbb{Q}_1(\mathfrak{w}, {L_1}(\mathfrak{w}),
{L_2}(\mathfrak{w}), {L_3}(\mathfrak{w}))
\big\vert \,\mathrm{d}\mathfrak{w}\\[0.3cm]
&\leq r_1\Delta_{\hbar_1}\hbar_1(s) + \frac{(1-\theta)
\sigma S^{\sigma-1}}{\mathcal{AB}(\theta)} \alpha_1
\Vert {L_1}^* - {L_1} \Vert
+ \frac{\theta\sigma S^{\theta+\sigma - 1} \Gamma(\sigma) }{\mathcal{AB}(\theta)
\Gamma(\theta + \sigma)} \alpha_1 \Vert {L_1}^*
- {L_1} \Vert\\[0.3cm]
&\leq r_1\Delta_{\hbar_1}\hbar_1(s)
+ \left[\frac{(1-\theta)\sigma S^{\sigma-1}}{\mathcal{AB}(\theta)}
+  \frac{\theta\sigma S^{\theta+\sigma - 1} \Gamma(\sigma) }{\mathcal{AB}(\theta)
\Gamma(\theta + \sigma)} \right] \alpha_1 \Vert {L_1}-{L_1}^*\Vert.
\end{align*}
Accordingly, we obtain that
\begin{equation*}
\Vert {L_1}^* - {L_1} \Vert
\leq \frac{ r_1 \Delta_{\hbar_1} \hbar_1(s) }{1- \left[
\dfrac{(1-\theta)\sigma S^{\sigma-1}}{\mathcal{AB}(\theta)}
+  \dfrac{\theta\sigma S^{\theta+\sigma - 1}
\Gamma(\sigma)}{\mathcal{AB}(\theta)
\Gamma(\theta + \sigma)} \right] \alpha_1}.
\end{equation*}
Set
$$
a_{(\mathbb{Q}_1,\hbar_1)}
=\dfrac{\Delta_{\hbar_1}}{1- \left[  \dfrac{(1-\theta)\sigma
S^{\sigma-1}}{\mathcal{AB}(\theta)}  +  \dfrac{\theta\sigma
S^{\theta+\sigma - 1} \Gamma(\sigma) }{\mathcal{AB}(\theta)
\Gamma(\theta + \sigma)} \right] \alpha_1}.
$$
Then $\Vert {L_1}^* - {L_1} \Vert
\leq r_1 a_{(\mathbb{Q}_1,\hbar_1)} \hbar_1 (s)$. Similarly,
$$
\Vert {L_2}^* - {L_2} \Vert \leq r_2 a_{(\mathbb{Q}_2,\hbar_2)} \hbar_2 (s),
$$
$$
\Vert {L_3}^* - {L_3} \Vert \leq r_3 a_{(\mathbb{Q}_3,\hbar_3)} \hbar_3 (s),
$$
where
$$
a_{(\mathbb{Q}_j,\hbar_j)}
=\dfrac{\Delta_{\hbar_j}}{1- \left[
\dfrac{(1-\theta)\sigma S^{\sigma-1}}{\mathcal{AB}(\theta)}
+ \dfrac{\theta\sigma S^{\theta+\sigma - 1} \Gamma(\sigma) }{
\mathcal{AB}(\theta) \Gamma(\theta + \sigma)} \right]
\alpha_j},\quad j\in \{ 2,3\}.
$$
As a consequence, the fractal-fractional polluted lake system \eqref{model2}
is stable in the sense of Definition~\ref{def41}. By defining
$r_j=1$, $j\in \{ 1,2,3\}$, our FF polluted lake
system model \eqref{model2} is also stable
in the sense of Definition~\ref{def42}.
\end{proof}


\section{Numerical algorithm via the Adams--Bashforth method}
\label{sec:6}

The Adams--Bashforth method is a robust numerical integration 
technique commonly used for solving most differential equations. 
Its higher-order accuracy and efficiency make it particularly suitable 
for approximating the solution of dynamic systems, such as those 
describing the behavior of polluted lake systems. By choosing the 
Adams--Bashforth technique, we aim to achieve accurate and stable 
numerical solutions for the fractal-fractional polluted 
lake system \eqref{model2}.

To do this, we apply the
fractional Adams--Bashforth technique with two-step
Lagrange polynomials. For that we redefine the fractal-fractional integral
equations \eqref{eq477} at $s_{k+1}$. Precisely, we discretize the integral
equations \eqref{eq477} for $s =  s_{k+1}$ as follows:
\begin{align*}
\begin{cases}
{L_1}(s_{k+1}) = \displaystyle L_{1,0}
+ \frac{(1-\theta)\sigma s_k^{\sigma-1}}{\mathcal{AB}(\theta)}
\mathbb{Q}_1(s_k, {L_1}(s_k), {L_2}(s_k), {L_3}(s_k))\\[0.5cm]
\quad+\displaystyle\frac{\theta\sigma}{\mathcal{AB}(\theta) \Gamma(\theta)}
\int_0^{s_{k+1}} \mathfrak{w}^{\sigma -1} (s_{k+1}- \mathfrak{w})^{\theta-1}
\mathbb{Q}_1(\mathfrak{w}, {L_1}(\mathfrak{w}), {L_2}(\mathfrak{w}),
{L_3}(\mathfrak{w}))\, \mathrm{d}\mathfrak{w},\\[0.7cm]
{L_2}(s_{k+1}) = \displaystyle L_{2,0}
+\frac{(1-\theta)\sigma s_k^{\sigma-1}}{\mathcal{AB}(\theta)}
\mathbb{Q}_2(s_k, {L_1}(s_k), {L_2}(s_k), {L_3}(s_k))\\[0.5cm]
\quad+\displaystyle\frac{\theta\sigma}{\mathcal{AB}(\theta) \Gamma(\theta)}
\int_0^{s_{k+1}} \mathfrak{w}^{\sigma -1} (s_{k+1}- \mathfrak{w})^{\theta-1}
\mathbb{Q}_2(\mathfrak{w}, {L_1}(\mathfrak{w}),
{L_2}(\mathfrak{w}), {L_3}(\mathfrak{w}))\,
\mathrm{d}\mathfrak{w},\\[0.7cm]
{L_3}(s_{k+1}) = \displaystyle L_{3,0}
+ \frac{(1-\theta)\sigma s_k^{\sigma-1}}{\mathcal{AB}(\theta)}
\mathbb{Q}_3(s_k, {L_1}(s_k), {L_2}(s_k), {L_3}(s_k))\\[0.5cm]
\quad+\displaystyle\frac{\theta\sigma}{\mathcal{AB}(\theta)
\Gamma(\theta)} \int_0^{s_{k+1}} \mathfrak{w}^{\sigma -1}
(s_{k+1}- \mathfrak{w})^{\theta-1} \mathbb{Q}_3(\mathfrak{w},
{L_1}(\mathfrak{w}), {L_2}(\mathfrak{w}),
{L_3}(\mathfrak{w}))\, \mathrm{d}\mathfrak{w}.
\end{cases}
\end{align*}
The approximation of the above integrals are given by
\begin{align*}
\begin{cases}
{L_1}(s_{k+1}) = \displaystyle L_{1,0}
+\frac{(1-\theta)\sigma s_k^{\sigma-1}}{\mathcal{AB}(\theta)}
\mathbb{Q}_1(s_k, {L_1}(s_k), {L_2}(s_k),
{L_3}(s_k))\\[0.5cm]
\quad+\displaystyle\frac{\theta\sigma}{\mathcal{AB}(\theta)
\Gamma(\theta)} \sum_{\ell=1}^{k} \int_{s_\ell}^{s_{\ell+1}}
\mathfrak{w}^{\sigma -1} (s_{k+1}- \mathfrak{w})^{\theta-1}
\mathbb{Q}_1(\mathfrak{w}, {L_1}(\mathfrak{w}),
{L_2}(\mathfrak{w}), {L_3}(\mathfrak{w}))
\, \mathrm{d}\mathfrak{w}, \\[0.7cm]
{L_2}(s_{k+1}) = \displaystyle L_{2,0}
+\frac{(1-\theta)\sigma s_k^{\sigma-1}}{\mathcal{AB}(\theta)}
\mathbb{Q}_2(s_k, {L_1}(s_k), {L_2}(s_k),
{L_3}(s_k))\\[0.5cm]
\quad +\displaystyle\frac{\theta\sigma}{\mathcal{AB}(\theta)
\Gamma(\theta)} \sum_{\ell=1}^{k} \int_{s_\ell}^{s_{\ell+1}}
\mathfrak{w}^{\sigma -1} (s_{k+1}- \mathfrak{w})^{\theta-1}
\mathbb{Q}_2(\mathfrak{w}, {L_1}(\mathfrak{w}),
{L_2}(\mathfrak{w}), {L_3}(\mathfrak{w}))
\, \mathrm{d}\mathfrak{w}, \\[0.7cm]
{L_3}(s_{k+1}) = \displaystyle L_{3,0}
+\frac{(1-\theta)\sigma s_k^{\sigma-1}}{\mathcal{AB}(\theta)}
\mathbb{Q}_3(s_k, {L_1}(s_k), {L_2}(s_k),
{L_3}(s_k))\\[0.5cm]
\quad+\displaystyle\frac{\theta\sigma}{\mathcal{AB}(\theta)
\Gamma(\theta)} \sum_{\ell=1}^{k} \int_{s_\ell}^{s_{\ell+1}}
\mathfrak{w}^{\sigma -1} (s_{k+1}- \mathfrak{w})^{\theta-1}
\mathbb{Q}_3(\mathfrak{w}, {L_1}(\mathfrak{w}),
{L_2}(\mathfrak{w}), {L_3}(\mathfrak{w}))
\, \mathrm{d}\mathfrak{w}.
\end{cases}
\end{align*}
Next, we approximate $\mathfrak{w}^{\sigma-1}
\mathbb{Q}_j (\mathfrak{w}, {L_1}(\mathfrak{w}),
{L_2}(\mathfrak{w}), {L_3}(\mathfrak{w}))$, $j=1,2,3$,
on $[s_{\ell} , s_{\ell+1}]$ by applying two-step Lagrange
interpolation polynomials under the step size $\mathbf{h} = s_\ell - s_{\ell-1}$.
By direct computations, we obtain the following algorithm that yields
numerical solutions to the FF-model of polluted lake system \eqref{model2}:
\begin{align}
\label{eqq61}
{L_1}_{,k+1}
&= L_{1,0} + \frac{(1-\theta)\sigma s_k^{\sigma-1}}{\mathcal{AB}(\theta)}
\mathbb{Q}_1(s_k, {L_1}_,k, {L_2}_,k, {L_3}_,k)
+\frac{\sigma \mathbf{h}^{\theta}}{\mathcal{AB}(\theta)\Gamma(\theta+2)}\nonumber\\[0.5cm]
&\times \sum_{\ell=1}^k \left[ s_\ell^{\sigma - 1}
\mathbb{Q}_1\left(s_\ell, {L_1}_{,\ell}, {L_2}_{,\ell}, {L_3}_{,\ell}\right)
\hat{Y}_{1}(k,\ell) - s_{\ell-1}^{\sigma -1} \mathbb{Q}_1\big(s_{\ell-1},
{L_1}_{,\ell-1}, {L_2}_{,\ell-1}, {L_3}_{,\ell-1}  \big)\hat{Y}_{2}(k,\ell) \right],
\end{align}
\begin{align}
\label{eqq62}
{L_2}_{,k+1}
&= L_{2,0} + \frac{(1-\theta)\sigma s_k^{\sigma-1}}{\mathcal{AB}(\theta)}
\mathbb{Q}_2(s_k, {L_1}_,k, {L_2}_,k, {L_3}_,k)
+\frac{\sigma \mathbf{h}^{\theta}}{\mathcal{AB}(\theta)
\Gamma(\theta+2)}\nonumber\\[0.5cm]
&\times \sum_{\ell=1}^k \left[ s_\ell^{\sigma - 1}
\mathbb{Q}_2\left(s_\ell, {L_1}_{,\ell}, {L_2}_{,\ell},
{L_3}_{,\ell}  \right) \hat{Y}_{1}(k,\ell) - s_{\ell-1}^{\sigma -1}
\mathbb{Q}_2\big(s_{\ell-1}, {L_1}_{,\ell-1},
{L_2}_{,\ell-1}, {L_3}_{,\ell-1}\big)\hat{Y}_{2}(k,\ell) \right],
\end{align}
\begin{align}
\label{eqq63}
{L_3}_{,k+1}
&= L_{3,0} + \frac{(1-\theta)\sigma
s_k^{\sigma-1}}{\mathcal{AB}(\theta)}
\mathbb{Q}_3(s_k, {L_1}_,k, {L_2}_,k, {L_3}_,k)
+\frac{\sigma \mathbf{h}^{\theta}}{\mathcal{AB}(\theta)
\Gamma(\theta+2)}\nonumber\\[0.5cm]
&\times\sum_{\ell=1}^k \left[ s_\ell^{\sigma - 1}
\mathbb{Q}_3\big(s_\ell, {L_1}_{,\ell}, {L_2}_{,\ell},
{L_3}_{,\ell}  \big) \hat{Y}_{1}(k,\ell)
- s_{\ell-1}^{\sigma -1} \mathbb{Q}_3\left(s_{\ell-1},
{L_1}_{,\ell-1}, {L_2}_{,\ell-1},
{L_3}_{,\ell-1}  \right)\hat{Y}_{2}(k,\ell) \right],
\end{align}
where
$$
\hat{Y}_{1}(k,\ell) = (k+1-\ell)^\theta (k-\ell+2+\theta)
-(k-\ell)^\theta (k-\ell+2+2\theta)
$$
and
$$
\hat{Y}_{2}(k,\ell)
= (k+1-\ell)^{\theta +1} - (k-\ell)^\theta (k-\ell+1+\theta).
$$


\section{Numerical algorithm via Newton's polynomials}
\label{sec:7}

Here we develop a different approximation algorithm 
(based on Newton's Polynomials) to compute numerically 
the solutions of our fractal-fractional
polluted lake system \eqref{model2}.
The use of Newton's polynomials in interpolation is motivated 
by their simplicity and applicability for approximating functions 
based on a set of given data points. In the context of modeling and analysis, 
Newton's polynomials offer a flexible approach to represent complex relationships 
within the polluted lake system. The polynomial interpolation technique enables 
us to construct a continuous function that approximates the behavior of the system, 
facilitating a more detailed and comprehensive understanding of its dynamics.
To the best of our knowledge, the idea was first introduced in \cite{aa}.
Precisely, we follow \cite{aa} with the IVP \eqref{eq43}
subject to the conditions \eqref{eq44} and \eqref{eq45}.
In this case, we have
\begin{equation*}
\mathbb{K}(s) - \mathbb{K}(0) = \frac{\theta\sigma}{\mathcal{AB}(\theta)
\Gamma(\theta)} \int_0^s \mathfrak{w}^{\sigma -1} (s- \mathfrak{w})^{\theta-1}
\mathbb{Q}(\mathfrak{w}, \mathbb{K}(\mathfrak{w}))\, \mathrm{d}\mathfrak{w}
+\frac{(1-\theta)\sigma s^{\sigma-1}}{\mathcal{AB}(\theta)} \mathbb{Q}(s, \mathbb{K}(s)).
\end{equation*}
Set $ \mathbb{Q}^*(s,\mathbb{K}(s)) = \sigma s^{\sigma -1} \mathbb{Q}(s,\mathbb{K}(s))$. Then,
\begin{equation*}
\mathbb{K}(s) - \mathbb{K}(0) = \frac{\theta}{\mathcal{AB}(\theta)
\Gamma(\theta)} \int_0^s (s- \mathfrak{w})^{\theta-1}
\mathbb{Q}^*(\mathfrak{w}, \mathbb{K}(\mathfrak{w}))\, \mathrm{d}\mathfrak{w}
+\frac{(1-\theta)}{\mathcal{AB}(\theta)} \mathbb{Q}^*(s, \mathbb{K}(s)).
\end{equation*}
By discretizing the above equation at $s=s_{k+1}=(k+1)\mathbf{h}$, we get
\begin{equation*}
\mathbb{K}(s_{k+1}) - \mathbb{K}(0)
= \frac{\theta}{\mathcal{AB}(\theta) \Gamma(\theta)}
\int_0^{s_{k+1}} (s_{k+1}- \mathfrak{w})^{\theta-1}
\mathbb{Q}^*(\mathfrak{w}, \mathbb{K}(\mathfrak{w}))\,
\mathrm{d}\mathfrak{w} + \frac{(1-\theta)}{\mathcal{AB}(\theta)}
\mathbb{Q}^*(s_k, \mathbb{K}(s_k)).
\end{equation*}
Approximating the above integral, we can write that
\begin{equation}
\label{eq71}
\begin{aligned}
\mathbb{K}(s_{k+1})
&= \mathbb{K}_0 + \frac{(1-\theta)}{\mathcal{AB}(\theta)}
\mathbb{Q}^*(s_k, \mathbb{K}(s_k))\\[0.2cm]
&\quad +\frac{\theta}{\mathcal{AB}(\theta) \Gamma(\theta)}
\sum_{\ell=2}^{k}\int_{s_\ell}^{s_{\ell+1}} (s_{k+1}
- \mathfrak{w})^{\theta-1} \mathbb{Q}^*(\mathfrak{w},
\mathbb{K}(\mathfrak{w}))\, \mathrm{d}\mathfrak{w}.
\end{aligned}
\end{equation}
Now we approximate function $\mathbb{Q}^*(s, \mathbb{K}(s))$
with the Newton polynomial
\begin{multline}
\label{eq72}
P_k^*(\mathfrak{w})
= \mathbb{Q}^*(s_{k-2}, \mathbb{K}(s_{k-2}))
+ \frac{\mathbb{Q}^*(s_{k-1}, \mathbb{K}(s_{k-1}))
- \mathbb{Q}^*(s_{k-2}, \mathbb{K}(s_{k-2}))}{\mathbf{h}}
(\mathfrak{w} - s_{k-2})\\
+ \frac{\mathbb{Q}^*(s_{k}, \mathbb{K}(s_{k}))
- 2 \mathbb{Q}^*(s_{k-1}, \mathbb{K}(s_{k-1}))
+ \mathbb{Q}^*(s_{k-2}, \mathbb{K}(s_{k-2}))}{2\mathbf{h}^2}
(\mathfrak{w} - s_{k-2})(\mathfrak{w} - s_{k-1}).
\end{multline}
Substituting \eqref{eq72} into \eqref{eq71}, we obtain that
\begin{align*}
\mathbb{K}_{k+1}
&= \mathbb{K}_0 + \frac{(1-\theta)}{\mathcal{AB}(\theta)}
\mathbb{Q}^*(s_k, \mathbb{K}(s_k)) +\frac{\theta}{\mathcal{AB}(\theta)
\Gamma(\theta)} \sum_{\ell=2}^{k}\int_{s_\ell}^{s_{\ell+1}}
\Big[ \mathbb{Q}^*(s_{\ell-2}, \mathbb{K}_{\ell-2})\\[0.3cm]
&\quad + \frac{\mathbb{Q}^*(s_{\ell-1}, \mathbb{K}_{\ell-1})
- \mathbb{Q}^*(s_{\ell-2}, \mathbb{K}_{\ell-2}) }{\mathbf{h}}
(\mathfrak{w} - s_{\ell-2}) \\[0.3cm]
&\quad + \frac{\mathbb{Q}^*(s_{\ell}, \mathbb{K}_{\ell})
- 2 \mathbb{Q}^*(s_{\ell-1}, \mathbb{K}_{\ell-1})
+ \mathbb{Q}^*(s_{\ell-2}, \mathbb{K}_{\ell-2})}{2\mathbf{h}^2} (\mathfrak{w}
- s_{\ell-2})(\mathfrak{w} - s_{\ell-1}) \Big]\\[0.3cm]
&\quad \times (s_{k+1}- \mathfrak{w})^{\theta-1} \, \mathrm{d}\mathfrak{w}.
\end{align*}
Simplifying the above relations, we get
\begin{align*}
\mathbb{K}_{k+1}
&= \mathbb{K}_0
+\frac{(1-\theta)}{\mathcal{AB}(\theta)}
\mathbb{Q}^*(s_k, \mathbb{K}(s_k)) \\[0.3cm]
&\quad +\frac{\theta}{\mathcal{AB}(\theta) \Gamma(\theta)}
\sum_{\ell=2}^{k} \left[ \int_{s_\ell}^{s_{\ell+1}}
\mathbb{Q}^*(s_{\ell-2}, \mathbb{K}_{\ell-2})(s_{k+1}
-\mathfrak{w})^{\theta - 1}\, \mathrm{d}\mathfrak{w} \right.\\[0.3cm]
&\quad + \int_{s_\ell}^{s_{\ell+1}} \frac{\mathbb{Q}^*(s_{\ell-1},
\mathbb{K}_{\ell-1}) - \mathbb{Q}^*(s_{\ell-2},
\mathbb{K}_{\ell-2}) }{\mathbf{h}} (\mathfrak{w} - s_{\ell-2})(s_{k+1}
-\mathfrak{w})^{\theta - 1}\, \mathrm{d}\mathfrak{w} \\[0.3cm]
&\quad + \int_{s_\ell}^{s_{\ell+1}} \frac{\mathbb{Q}^*(s_{\ell},
\mathbb{K}_{\ell}) - 2 \mathbb{Q}^*(s_{\ell-1}, \mathbb{K}_{\ell-1})
+ \mathbb{Q}^*(s_{\ell-2}, \mathbb{K}_{\ell-2})}{2\mathbf{h}^2}
(\mathfrak{w} - s_{\ell-2})(\mathfrak{w} - s_{\ell-1}) \\[0.3cm]
&\quad \times (s_{k+1}- \mathfrak{w})^{\theta-1}
\, \mathrm{d}\mathfrak{w} \Big]
\end{align*}
and it follows that
\begin{equation}
\label{eq73}
\begin{aligned}
\mathbb{K}_{k+1}
&= \mathbb{K}_0 +
\frac{(1-\theta)}{\mathcal{AB}(\theta)}
\mathbb{Q}^*(s_k, \mathbb{K}(s_k)) \\[0.3cm]
&\quad +\frac{\theta}{\mathcal{AB}(\theta) \Gamma(\theta)}
\sum_{\ell=2}^{k}  \mathbb{Q}^*(s_{\ell-2}, \mathbb{K}_{\ell-2})
\int_{s_\ell}^{s_{\ell+1}}(s_{k+1}-\mathfrak{w})^{\theta - 1}
\, \mathrm{d}\mathfrak{w} \\[0.3cm]
&\quad + \frac{\theta}{\mathcal{AB}(\theta) \Gamma(\theta)}
\sum_{\ell=2}^{k} \frac{\mathbb{Q}^*(s_{\ell-1}, \mathbb{K}_{\ell-1})
- \mathbb{Q}^*(s_{\ell-2}, \mathbb{K}_{\ell-2}) }{\mathbf{h}}
\int_{s_\ell}^{s_{\ell+1}}  (\mathfrak{w} - s_{\ell-2})(s_{k+1}
-\mathfrak{w})^{\theta - 1}\, \mathrm{d}\mathfrak{w} \\[0.3cm]
&\quad + \frac{\theta}{\mathcal{AB}(\theta) \Gamma(\theta)}
\sum_{\ell=2}^{k}  \frac{\mathbb{Q}^*(s_{\ell}, \mathbb{K}_{\ell})
- 2 \mathbb{Q}^*(s_{\ell-1}, \mathbb{K}_{\ell-1})
+ \mathbb{Q}^*(s_{\ell-2}, \mathbb{K}_{\ell-2})}{2\mathbf{h}^2}\\[0.3cm]
&\quad \times \int_{s_\ell}^{s_{\ell+1}} (\mathfrak{w}
- s_{\ell-2})(\mathfrak{w} - s_{\ell-1}) (s_{k+1}
- \mathfrak{w})^{\theta-1} \, \mathrm{d}\mathfrak{w}.
\end{aligned}
\end{equation}
On the other hand, by computing the above three integrals separately,
one gets
\begin{equation}
\label{eq74}
\int_{s_\ell}^{s_{\ell+1}}
(s_{k+1}-\mathfrak{w})^{\theta - 1}\, \mathrm{d}\mathfrak{w}
= \frac{\mathbf{h}^\theta}{\theta}
\left[ (k-\ell + 1)^\theta - (k-\ell)^\theta \right],
\end{equation}
\begin{multline}
\label{eq75}
\int_{s_\ell}^{s_{\ell+1}}  (\mathfrak{w} - s_{\ell-2})(s_{k+1}
-\mathfrak{w})^{\theta - 1}\, \mathrm{d}\mathfrak{w}\\
= \frac{\mathbf{h}^{\theta+1}}{\theta(\theta+1)}
\left[ (k-\ell +1)^\theta (k-\ell +3 + 2\theta)
- (k-\ell +1)^\theta (k-\ell +3 +3\theta) \right],
\end{multline}
and
\begin{equation}
\label{eq76}
\begin{aligned}
\int_{s_\ell}^{s_{\ell+1}} (\mathfrak{w} - s_{\ell-2})
&(\mathfrak{w} - s_{\ell-1})  (s_{k+1}- \mathfrak{w})^{\theta-1}
\, \mathrm{d}\mathfrak{w}
= \frac{\mathbf{h}^{\theta+2}}{\theta(\theta+1)(\theta+2)}
\Big( (k-\ell +1)^\theta \big[ 2(k-\ell)^2 \\[0.2cm]
&+ (3\theta + 10)(k-\ell) + 2\theta^2 + 9\theta + 12 \big]
- (k-\ell)^\theta \big[ 2(k-\ell)^2 \\[0.2cm]
&+ (5\theta + 10)(k-\ell) + 6\theta^2 + 18\theta + 12 \big] \Big).
\end{aligned}
\end{equation}
By putting \eqref{eq74}, \eqref{eq75}, and \eqref{eq76} into \eqref{eq73},
we obtain that
\begin{align}\label{eq77}
\mathbb{K}_{k+1}
&= \mathbb{K}_0
+\frac{(1-\theta)}{\mathcal{AB}(\theta)}
\mathbb{Q}^*(s_k, \mathbb{K}(s_k)) \nonumber\\[0.3cm]
&\quad +\frac{\theta\mathbf{h}^\theta}{\mathcal{AB}(\theta)
\Gamma(\theta + 1)} \sum_{\ell=2}^{k}  \mathbb{Q}^*(s_{\ell-2},
\mathbb{K}_{\ell-2})
\big[ (k-\ell + 1)^\theta - (k-\ell)^\theta \big] \nonumber\\[0.3cm]
&\quad + \frac{\theta \mathbf{h}^\theta }{\mathcal{AB}(\theta)
\Gamma(\theta + 2)} \sum_{\ell=2}^{k} \big[\mathbb{Q}^*(s_{\ell-1},
\mathbb{K}_{\ell-1}) - \mathbb{Q}^*(s_{\ell-2},
\mathbb{K}_{\ell-2}) \big] \nonumber\\[0.3cm]
&\quad \times \big[ (k-\ell +1)^\theta (k-\ell +3 + 2\theta)
- (k-\ell +1)^\theta (k-\ell +3 +3\theta) \big] \nonumber\\[0.3cm]
&\quad + \frac{\theta \mathbf{h}^\theta }{2\mathcal{AB}(\theta)
\Gamma(\theta + 3)} \sum_{\ell=2}^{k}
\big[ \mathbb{Q}^*(s_{\ell}, \mathbb{K}_{\ell})
- 2 \mathbb{Q}^*(s_{\ell-1}, \mathbb{K}_{\ell-1})
+ \mathbb{Q}^*(s_{\ell-2}, \mathbb{K}_{\ell-2}) \big] \nonumber\\[0.3cm]
&\quad \times \Big[ (k-\ell +1)^\theta \big[ 2(k-\ell)^2
+ (3\theta + 10)(k-\ell) + 2\theta^2 + 9\theta + 12 \big]
- (k-\ell)^\theta \big[ 2(k-\ell)^2 \nonumber\\[0.2cm]
&\quad + (5\theta + 10)(k-\ell) + 6\theta^2 + 18\theta + 12 \big] \Big].
\end{align}
Finally, we replace $ \mathbb{Q}^*(s,\mathbb{K}(s))
= \sigma s^{\sigma -1} \mathbb{Q}(s,\mathbb{K}(s))$ into
\eqref{eq77}, and we get that
\begin{equation}
\label{eq78}
\begin{aligned}
\mathbb{K}_{k+1}
&= \mathbb{K}_0 +
\frac{(1-\theta)\sigma s_k^{\sigma-1}}{\mathcal{AB}(\theta)}
\mathbb{Q}(s_k, \mathbb{K}(s_k)) \\[0.3cm]
&+\frac{\theta\sigma\mathbf{h}^\theta}{\mathcal{AB}(\theta)
\Gamma(\theta + 1)} \sum_{\ell=2}^{k}  s_{\ell-2}^{\sigma-1}
\mathbb{Q}(s_{\ell-2}, \mathbb{K}_{\ell-2}) \hat{\Psi}_1(k,\ell,\theta)\\[0.3cm]
&+ \frac{\theta \sigma \mathbf{h}^\theta }{\mathcal{AB}(\theta)
\Gamma(\theta + 2)} \sum_{\ell=2}^{k} \Big[ s_{\ell-1}^{\sigma-1}
\mathbb{Q}(s_{\ell-1}, \mathbb{K}_{\ell-1}) - s_{\ell-2}^{\sigma-1}
\mathbb{Q}(s_{\ell-2}, \mathbb{K}_{\ell-2}) \Big]
\hat{\Psi}_2(k,\ell,\theta) \\[0.3cm]
&+ \frac{\theta \sigma \mathbf{h}^\theta }{2\mathcal{AB}(\theta)
\Gamma(\theta + 3)} \sum_{\ell=2}^{k} \Big[ s_{\ell}^{\sigma-1}
\mathbb{Q}(s_{\ell}, \mathbb{K}_{\ell}) - 2 s_{\ell-1}^{\sigma-1}
\mathbb{Q}(s_{\ell-1}, \mathbb{K}_{\ell-1}) + s_{\ell-2}^{\sigma-1}
\mathbb{Q}(s_{\ell-2}, \mathbb{K}_{\ell-2}) \Big]
\hat{\Psi}_3(k,\ell,\theta),
\end{aligned}
\end{equation}
where
\begin{equation}
\label{eq79}
\begin{aligned}
\hat{\Psi}_1(k,\ell,\theta)
&= (k-\ell + 1)^\theta - (k-\ell)^\theta,\\[0.2cm]
\hat{\Psi}_2(k,\ell,\theta)
&=  (k-\ell +1)^\theta (k-\ell +3 + 2\theta)
- (k-\ell)^\theta (k-\ell +3 +3\theta),\\[0.2cm]
\hat{\Psi}_3(k,\ell,\theta)
&=  (k-\ell +1)^\theta \left[ 2(k-\ell)^2
+ (3\theta + 10)(k-\ell) + 2\theta^2 + 9\theta + 12 \right]\\[0.2cm]
&\quad - (k-\ell)^\theta \left[ 2(k-\ell)^2 + (5\theta + 10)(k-\ell)
+ 6\theta^2 + 18\theta + 12 \right].
\end{aligned}
\end{equation}
Using the numerical scheme \eqref{eq78},
the numerical solutions to the fractal-fractional
polluted lake system \eqref{model2} are given by
\begin{equation}
\label{eq710}
\begin{aligned}
{L_1}_{,k+1}
&= L_{1,0} +
\frac{(1-\theta)\sigma s_k^{\sigma-1}}{\mathcal{AB}(\theta)}
\mathbb{Q}_1(s_k, {L_1}(s_k), {L_2}(s_k), {L_3}(s_k))\\[0.3cm]
&\quad +\frac{\theta\sigma\mathbf{h}^\theta}{\mathcal{AB}(\theta)
\Gamma(\theta + 1)} \sum_{\ell=2}^{k}  s_{\ell-2}^{\sigma-1}
\mathbb{Q}_1(s_{\ell-2}, {L_1}_{,\ell-2}, {L_2}_{,\ell-2},
{L_3}_{,\ell-2}) \hat{\Psi}_1(k,\ell,\theta)\\[0.3cm]
&\quad + \frac{\theta \sigma \mathbf{h}^\theta }{\mathcal{AB}(\theta)
\Gamma(\theta + 2)} \sum_{\ell=2}^{k} \Big[ s_{\ell-1}^{\sigma-1}
\mathbb{Q}_1(s_{\ell-1}, {L_1}_{,\ell-1}, {L_2}_{,\ell-1},
{L_3}_{,\ell-1})\\
&\quad - s_{\ell-2}^{\sigma-1} \mathbb{Q}_1(s_{\ell-2},
{L_1}_{,\ell-2}, {L_2}_{,\ell-2},
{L_3}_{,\ell-2}) \Big] \hat{\Psi}_2(k,\ell,\theta)\\[0.3cm]
&\quad + \frac{\theta \sigma \mathbf{h}^\theta }{2\mathcal{AB}(\theta)
\Gamma(\theta + 3)} \sum_{\ell=2}^{k} \Big[ s_{\ell}^{\sigma-1}
\mathbb{Q}_1(s_{\ell}, {L_1}_{,\ell},
{L_2}_{,\ell}, {L_3}_{,\ell})\\
&\quad - 2 s_{\ell-1}^{\sigma-1} \mathbb{Q}_1(s_{\ell-1},
{L_1}_{,\ell-1}, {L_2}_{,\ell-1},
{L_3}_{,\ell-1})\\[0.3cm]
&\quad + s_{\ell-2}^{\sigma-1} \mathbb{Q}_1(s_{\ell-2},
{L_1}_{,\ell-2}, {L_2}_{,\ell-2},
{L_3}_{,\ell-2}) \Big] \hat{\Psi}_3(k,\ell,\theta),
\end{aligned}
\end{equation}
\begin{equation}
\label{eq711}
\begin{aligned}
{L_2}_{,k+1}
&= L_{2,0} +
\frac{(1-\theta)\sigma s_k^{\sigma-1}}{\mathcal{AB}(\theta)}
\mathbb{Q}_2(s_k, {L_1}(s_k), {L_2}(s_k), {L_3}(s_k))\\[0.3cm]
&\quad +\frac{\theta\sigma\mathbf{h}^\theta}{\mathcal{AB}(\theta)
\Gamma(\theta + 1)} \sum_{\ell=2}^{k}  s_{\ell-2}^{\sigma-1}
\mathbb{Q}_2(s_{\ell-2}, {L_1}_{,\ell-2}, {L_2}_{,\ell-2},
{L_3}_{,\ell-2}) \hat{\Psi}_1(k,\ell,\theta)\\[0.3cm]
&\quad + \frac{\theta \sigma \mathbf{h}^\theta }{\mathcal{AB}(\theta)
\Gamma(\theta + 2)} \sum_{\ell=2}^{k} \Big[ s_{\ell-1}^{\sigma-1}
\mathbb{Q}_2(s_{\ell-1}, {L_1}_{,\ell-1}, {L_2}_{,\ell-1},
{L_3}_{,\ell-1})\\
&\quad - s_{\ell-2}^{\sigma-1} \mathbb{Q}_2(s_{\ell-2}, {L_1}_{,\ell-2},
{L_2}_{,\ell-2}, {L_3}_{,\ell-2}) \Big] \hat{\Psi}_2(k,\ell,\theta)\\[0.3cm]
&\quad + \frac{\theta \sigma \mathbf{h}^\theta }{2\mathcal{AB}(\theta)
\Gamma(\theta + 3)} \sum_{\ell=2}^{k} \Big[ s_{\ell}^{\sigma-1}
\mathbb{Q}_2(s_{\ell}, {L_1}_{,\ell}, {L_2}_{,\ell}, {L_3}_{,\ell})\\
&\quad - 2 s_{\ell-1}^{\sigma-1} \mathbb{Q}_2(s_{\ell-1}, {L_1}_{,\ell-1},
{L_2}_{,\ell-1}, {L_3}_{,\ell-1}) \\[0.3cm]
&\quad + s_{\ell-2}^{\sigma-1} \mathbb{Q}_2(s_{\ell-2}, {L_1}_{,\ell-2},
{L_2}_{,\ell-2}, {L_3}_{,\ell-2}) \Big] \hat{\Psi}_3(k,\ell,\theta),
\end{aligned}
\end{equation}
and
\begin{equation}
\label{eq712}
\begin{aligned}
{L_3}_{,k+1}
&= L_{3,0} +
\frac{(1-\theta)\sigma s_k^{\sigma-1}}{\mathcal{AB}(\theta)}
\mathbb{Q}_3(s_k, {L_1}(s_k), {L_2}(s_k),
{L_3}(s_k))\\[0.3cm]
&\quad +\frac{\theta\sigma\mathbf{h}^\theta}{\mathcal{AB}(\theta) \Gamma(\theta + 1)}
\sum_{\ell=2}^{k}  s_{\ell-2}^{\sigma-1} \mathbb{Q}_3(s_{\ell-2},
{L_1}_{,\ell-2}, {L_2}_{,\ell-2}, {L_3}_{,\ell-2})
\hat{\Psi}_1(k,\ell,\theta)\\[0.3cm]
&\quad + \frac{\theta \sigma \mathbf{h}^\theta }{\mathcal{AB}(\theta)
	\Gamma(\theta + 2)} \sum_{\ell=2}^{k}
\Big[ s_{\ell-1}^{\sigma-1} \mathbb{Q}_3(s_{\ell-1},
{L_1}_{,\ell-1}, {L_2}_{,\ell-1}, {L_3}_{,\ell-1})\\
&\quad - s_{\ell-2}^{\sigma-1} \mathbb{Q}_3(s_{\ell-2}, {L_1}_{,\ell-2},
{L_2}_{,\ell-2}, {L_3}_{,\ell-2}) \Big]
\hat{\Psi}_2(k,\ell,\theta)\\[0.3cm]
&\quad + \frac{\theta \sigma \mathbf{h}^\theta }{2\mathcal{AB}(\theta)
\Gamma(\theta + 3)} \sum_{\ell=2}^{k} \Big[ s_{\ell}^{\sigma-1}
\mathbb{Q}_3(s_{\ell}, {L_1}_{,\ell}, {L_2}_{,\ell}, {L_3}_{,\ell})\\
&\quad - 2 s_{\ell-1}^{\sigma-1} \mathbb{Q}_3(s_{\ell-1}, {L_1}_{,\ell-1},
{L_2}_{,\ell-1}, {L_3}_{,\ell-1})\\[0.3cm]
&\quad + s_{\ell-2}^{\sigma-1} \mathbb{Q}_3(s_{\ell-2},
{L_1}_{,\ell-2}, {L_2}_{,\ell-2},
{L_3}_{,\ell-2}) \Big] \hat{\Psi}_3(k,\ell,\theta),
\end{aligned}
\end{equation}
where $\hat{\Psi}_j(k,\ell,\theta)$
are defined in \eqref{eq79}, $j = 1,2,3$.


\section{Numerical simulations and discussion}
\label{sec:8}

Now we apply the Adams--Bashforth method (ABM) and Newton's polynomials method (NPM),
proposed respectively in Sections~\ref{sec:6} and \ref{sec:7}, to examine
and find numerical solutions $L_1, L_2, L_3$ of the proposed FF-model
and to observe the applicability, accuracy, and exactness of the developed algorithms.
To simulate the quantity of pollution in the modeled lakes, we coded
the algorithms \eqref{eqq61}--\eqref{eqq63} and
\eqref{eq710}--\eqref{eq712} in MATLAB, version R2019A.

To compare the results, we borrow from \cite{baz}
the following values for the parameters: $F_{21}=18$ $mi^{3}/year$,
$F_{31}=20$ $mi^{3}/year$, $F_{32}=18$ $mi^{3}/year$, $F_{13}=38$ $mi^{3}/year$,
$V_{1}=2900$ $mi^{3}$, $V_{2}=850$ $mi^{3}$, $V_{3}=1180$ $mi^{3}$.
Moreover, $L_{1,0}=L_{2,0}=L_{3,0}=0$. Also, various fractal
dimensions and fractional orders, i.e., $\theta=\sigma=0.85, 0.90, 0.95, 0.99$,
are considered for the simulations of the three state functions $L_1$,
$L_2$, and $L_3$.

We consider the suggested FF-model in three cases:
linear (Section~\ref{sec:8.1}),
exponentially decaying (Section~\ref{sec:8.2}),
and periodic (Section~\ref{sec:8.3}) input models.


\subsection{Linear input model}
\label{sec:8.1}

In this case, we consider the model in which the Lake~1 has a contaminant
with a linear concentration. Linear input states the steady behavior of the pollutant.
At time zero, the pollutant concentration is zero but, as the time increases,
the addition of pollutant is started and then is remained steadily. For example,
when a factory starts production at time zero, waste discharge begins at a fixed
rate and concentration. As a particular case, we chose $c(s)=\mu s$. Then,
for $\mu=100$, from \eqref{model2} we have
\begin{equation}
\label{model2x2}
\begin{cases}
{}^{\mathbf{FFML}}\mathcal{D}_{0,s}^{(\theta,\sigma)}{L_1}(s)
= \frac{38}{1180}L_3(s)+100s-\frac{20}{2900}
L_1(s)-\frac{18}{2900}L_1(s),\\[0.3cm]
{}^{\mathbf{FFML}}\mathcal{D}_{0,s}^{(\theta,\sigma)}{L_2}(s)
= \frac{18}{2900}L_1(s)-\frac{18}{850}L_2(s),\\[0.3cm]
{}^{\mathbf{FFML}}\mathcal{D}_{0,s}^{(\theta,\sigma)}{L_3}(s)
=\frac{20}{2900}L_1(s)+\frac{18}{850}L_2(s)
-\frac{38}{1180}L_3(s).
\end{cases}
\end{equation}
In Figures~\ref{fig:figsub2} (a), (b), and (c), the behavior
of the ABM approximations for each pair of the state functions
$L_{1}-L_{2}$, $L_{1}-L_{3}$,
and $L_{2}-L_{3}$, respectively, are given;
while in Figure \ref{fig:figsub2} (d), the 3D view
of $L_{1}-L_{2}-L_{3}$
under integer-order derivatives are graphically
illustrated for the linear input model
with time $s\in[0,60]$ and step size $h=0.1$.

Note that the parameter $h$ is explicitly defined 
as the step size, distinct from the stability parameters $\hbar_j$, 
$j\in \{ 1,2,3\}$, discussed in the stability Section~\ref{sec:5}. 
While here we emphasize $h$ as the step size in a specific context, 
Ulam--Hyers--Rassias stability, as a theory, is primarily concerned 
with the stability properties of functional equations. Unlike the numerical 
solution of differential equations, the choice of step size is not a direct 
consideration in the realm of Ulam--Hyers--Rassias stability. This stability 
theory focuses on understanding how small variations in functional equations 
lead to proportionate changes in the solutions, and the concept of a step size 
does not play a prominent role in that context.

\begin{figure}[ht!]
\centering
\subfigure[]{
\includegraphics[width=0.48\linewidth]{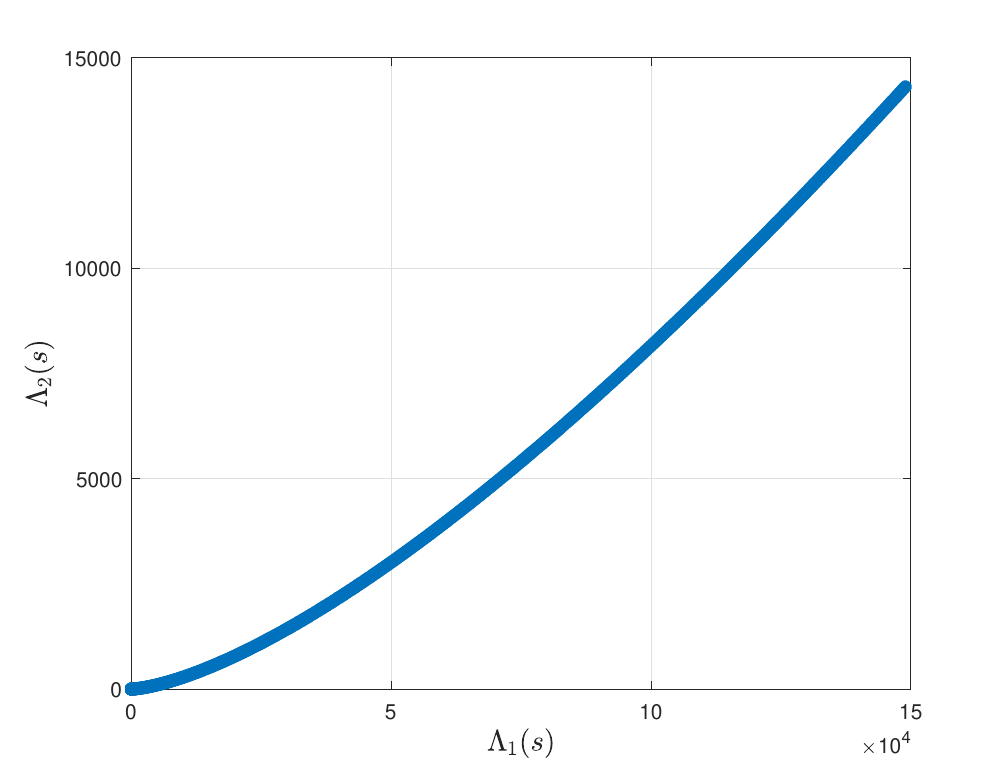}
\label{fig:sub2-first}}
\subfigure[]{
\includegraphics[width=0.48\linewidth]{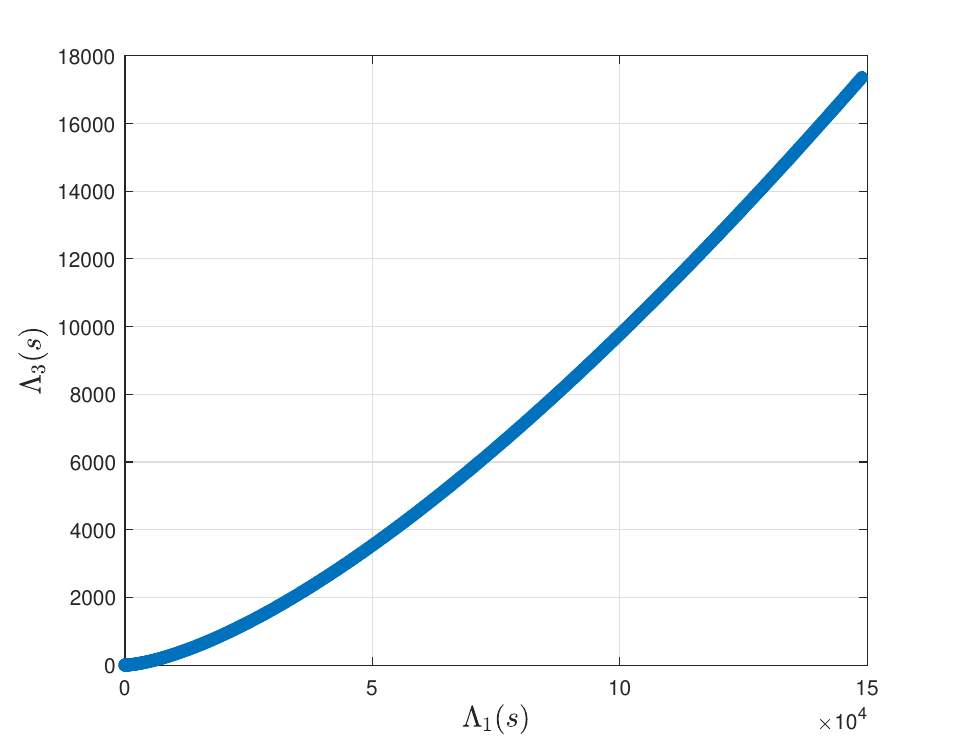}
\label{fig:sub2-second}}
\newline
\subfigure[]{
\includegraphics[width=0.48\linewidth]{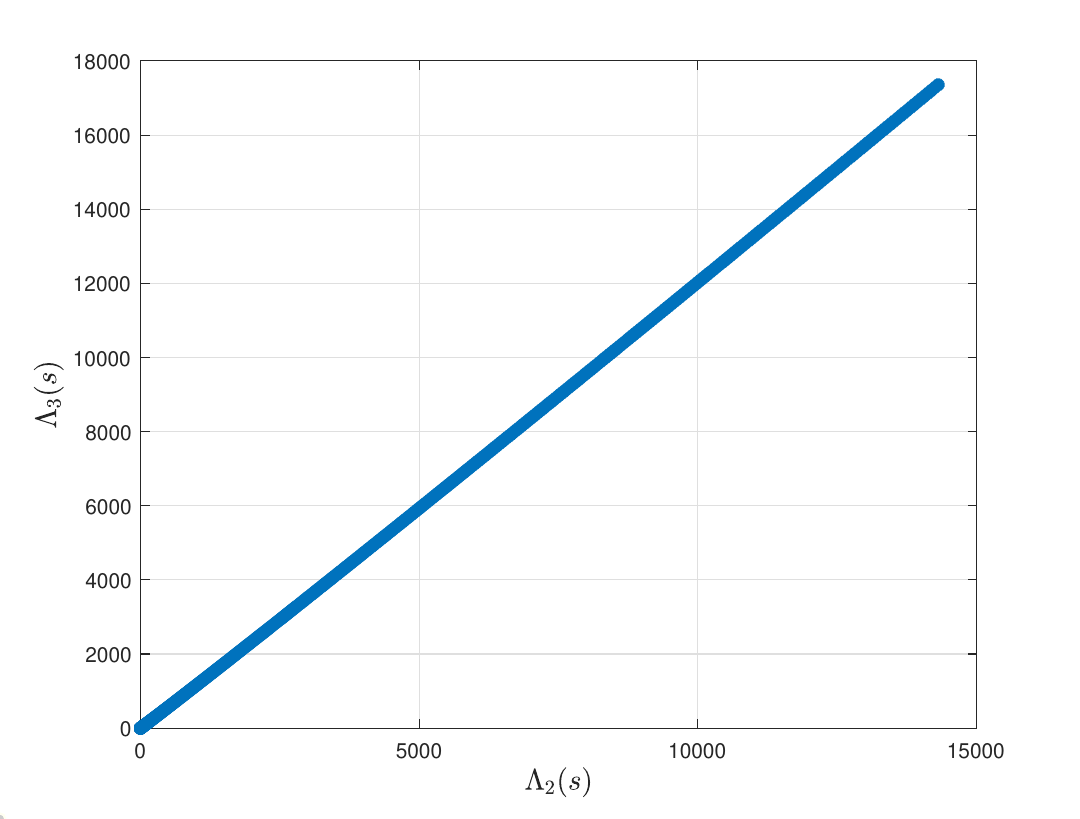}
\label{fig:sub2-third}}
\subfigure[]{
\includegraphics[width=0.48\linewidth]{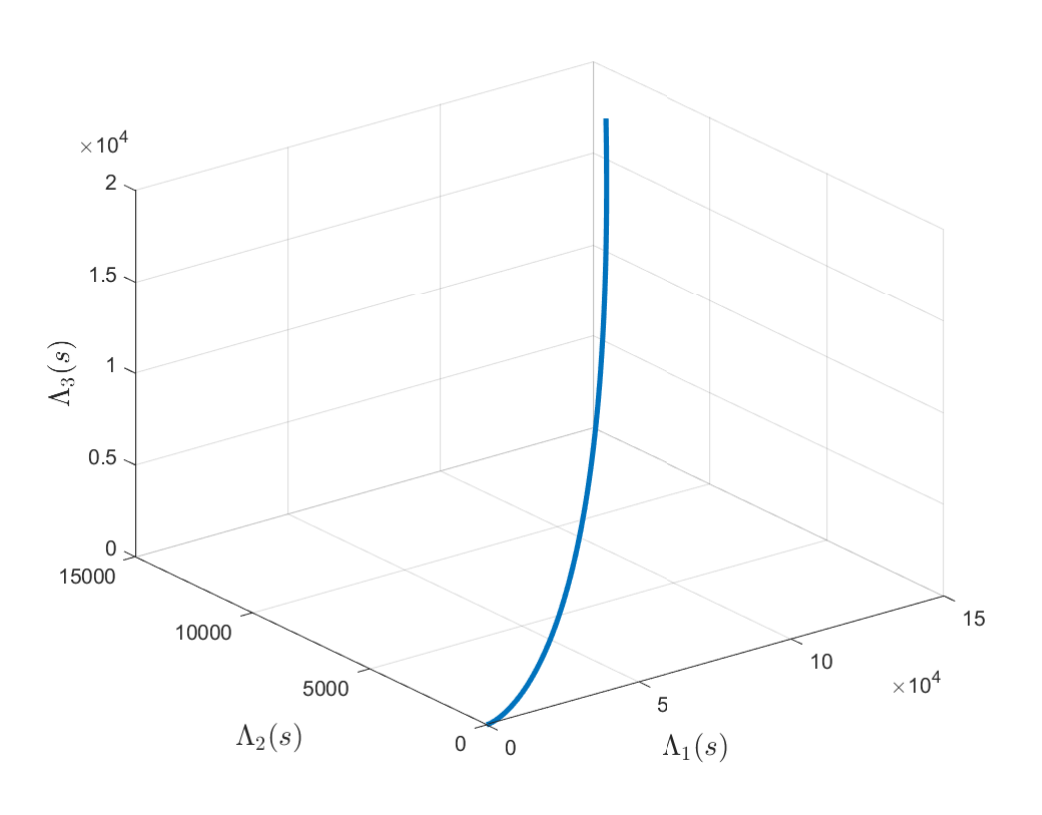}
\label{fig:sub2-fourth}}
\caption{Behaviors of each pair of state functions
(a) $L_{1}-L_{2}$, (b) $L_{1}-L_{3}$,
(c) $L_{2}-L_{3}$ and (d)~3D view of
$L_{1}-L_{2}-L_{3}$ under the integer-order.}
\label{fig:figsub2}
\end{figure}

In Table~\ref{table:1}, we present some numerical results of the two numerical techniques,
ABM and NPM, for the three state functions $L_{1}$, $L_{2}$ and $L_{3}$
in the linear input case, under integer-order derivatives
and step size $h=0.1$. From the obtained numerical results,
we can assert that the Adams--Bashforth approximations for the phase functions
$L_{1}$, $L_{2}$, and $L_{3}$ strongly agree with the ones obtained
by the Newton polynomials method for the time $s$ up to $10$ years.
\begin{table}[ht!]
\begin{minipage}[t]{1\linewidth}
\makeatletter\def\@captype{table}\makeatother
\caption{Comparison between ABM and NPM for the linear input case.}
\label{table:1}
\begin{center}
\begin{tabular}{c c c c c c c}
\toprule
\multirow{2.5}{*}{$s$} & \multicolumn{3}{c}{Adams--Bashforth}
& \multicolumn{3}{c}{Newton Polynomials}\\
\cmidrule(lr){2-4} \cmidrule(lr){5-7} 	
& $L_{1}$ &  $L_{2}$  &  $L_{3}$
&  $L_{1}$  &  $L_{2}$  &  $L_{3}$\\ \midrule
$0$ & $0.000000$ & $0.00000$ & $0.000000$ & $0.000000$ & $0.000000$ & $0.000000$\\		
$1$ & $58.241762$ & $0.122200$ & $0.136038$ & $58.249029$ & $0.114027$ & $0.126944$\\				
$2$ & $216.569135$ & $0.912690$ & $1.018175$ & $216.603012$ & $0.891958$ & $0.995030$\\				
$3$ & $472.231987$ & $2.959673$ & $3.308340$ & $472.291839$ & $2.926723$ & $3.271438$\\				
$4$ & $824.018631$ & $6.830863$ & $7.650506$ & $824.103833$ & $6.786029$ & $7.600138$\\				
$5$ & $1270.753481$ & $13.074750$ & $14.671769$ & $1270.863423$ & $13.018359$ & $14.608218$\\				
$6$ & $1811.296055$ & $22.221220$ & $24.982726$ & $1811.430139$ & $22.153588$ & $24.906273$\\				
$7$ & $2444.539990$ & $34.782156$ & $39.177854$ & $2444.697632$ & $34.703595$ & $39.088774$\\				
$8$ & $3169.412094$ & $51.252025$ & $57.835881$ & $3169.592721$ & $51.162836$ & $57.734443$\\				
$9$ & $3984.871413$ & $72.108440$ & $81.520147$ & $3985.074465$ & $72.008917$ & $81.406618$\\				
$10$ & $4889.908324$ & $97.812711$ & $110.778966$ & $4890.133254$ & $97.703141$ & $110.653605$\\[1ex]
\bottomrule
\end{tabular}
\end{center}
\end{minipage}
\end{table}

In Figure~\ref{fig:fig4}, the comparison of the numerical results from ABM and NPM for the state
functions $L_{1}$, $L_{2}$, and $L_{3}$ is shown graphically, for the time
$s\in[0,60]$ and the linear input case. We observe that the results of ABM and NPM have a high
agreement between them for each one of the state functions, even in the longer period of 60 years.
\begin{figure}[ht!]
\centering
\subfigure[]{
\includegraphics[width=0.48\linewidth]{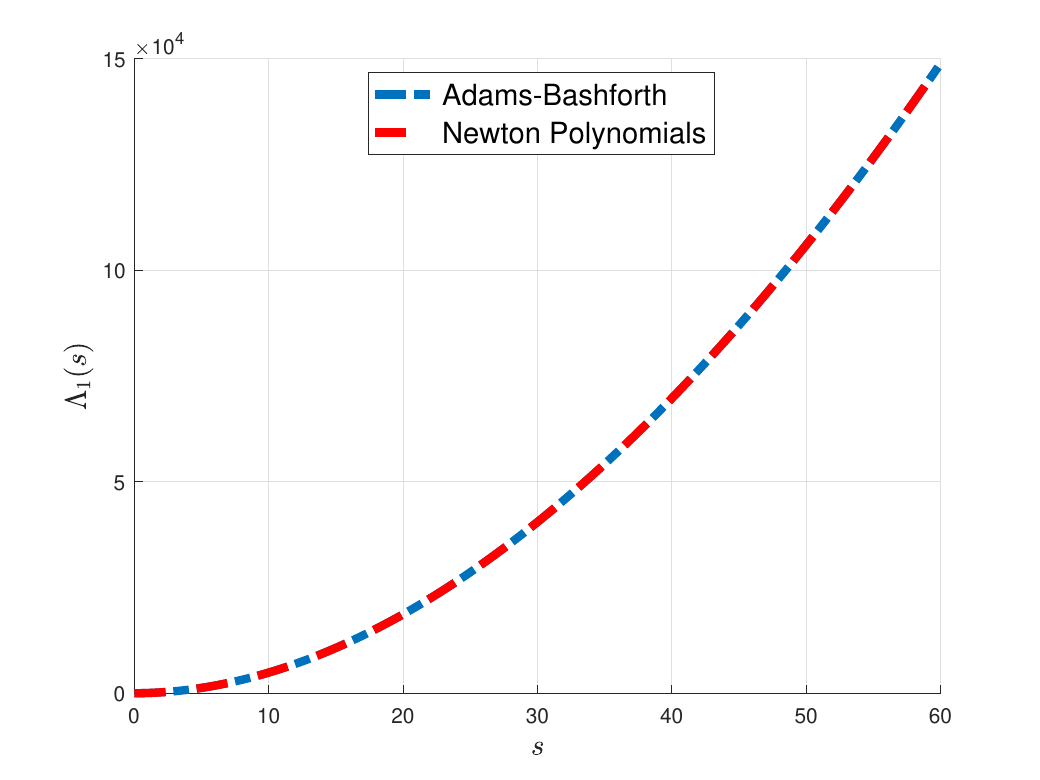}
\label{fig_linear1}}
\subfigure[]{
\includegraphics[width=0.48\linewidth]{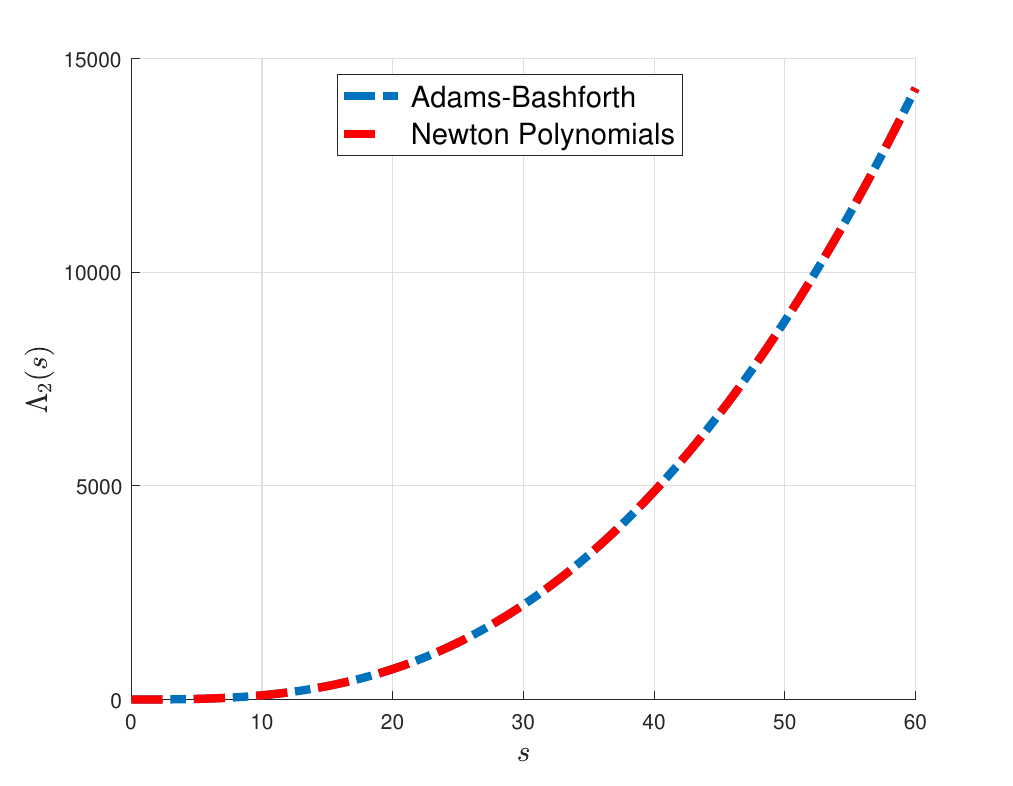}
\label{fig_linear2}}
\newline
\hfill
\subfigure[]{
\includegraphics[width=0.48\linewidth]{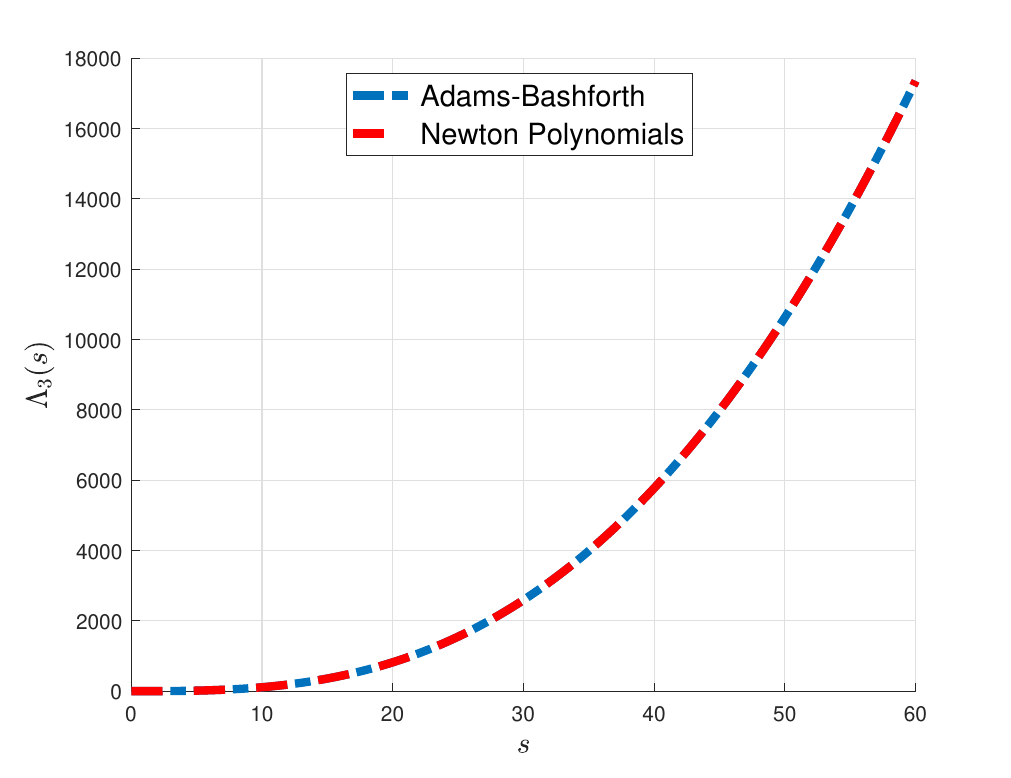}
\label{fig_linear3}}
\caption{Comparison between the ABM and NPM for
(a) $L_{1}(s)$, (b) $L_{2}(s)$ and
(c) $L_{3}(s)$ in the linear input model.}
\label{fig:fig4}
\end{figure}


In Figure~\ref{fig:figfraclinear}, we illustrate the behavior
of the three state functions $L_{1}$, $L_{2}$, and $L_{3}$
when the ABM is applied under the fractal-fractional orders $\theta=\sigma=0.85, 0.90, 0.95, 0.99$.
From these figures, we can observe that when the fractal-fractional order is getting closer
to the integer case, then the effect of the pollution is increasing on each lake model
at about the same rate. As an observation of these graphs, it can be said that the
non-integer order operator has a positive effect on the pollution
reduction in the lake pollution model.
\begin{figure}[ht!]
\centering
\subfigure[]{
\includegraphics[width=0.48\linewidth]{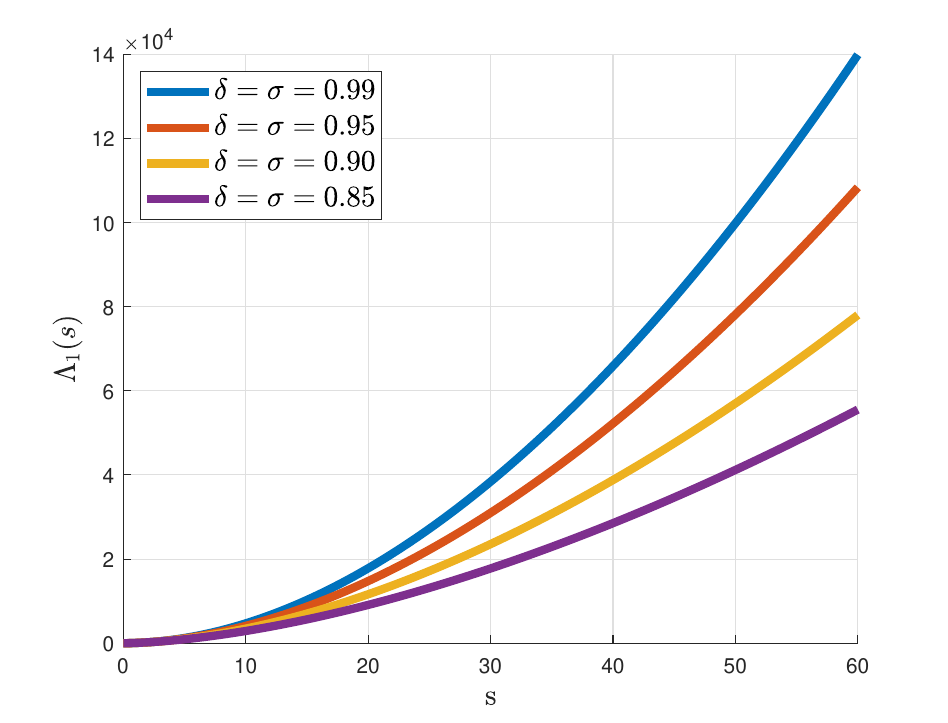}
\label{fig_fraclinear1}}
\subfigure[]{
\includegraphics[width=0.48\linewidth]{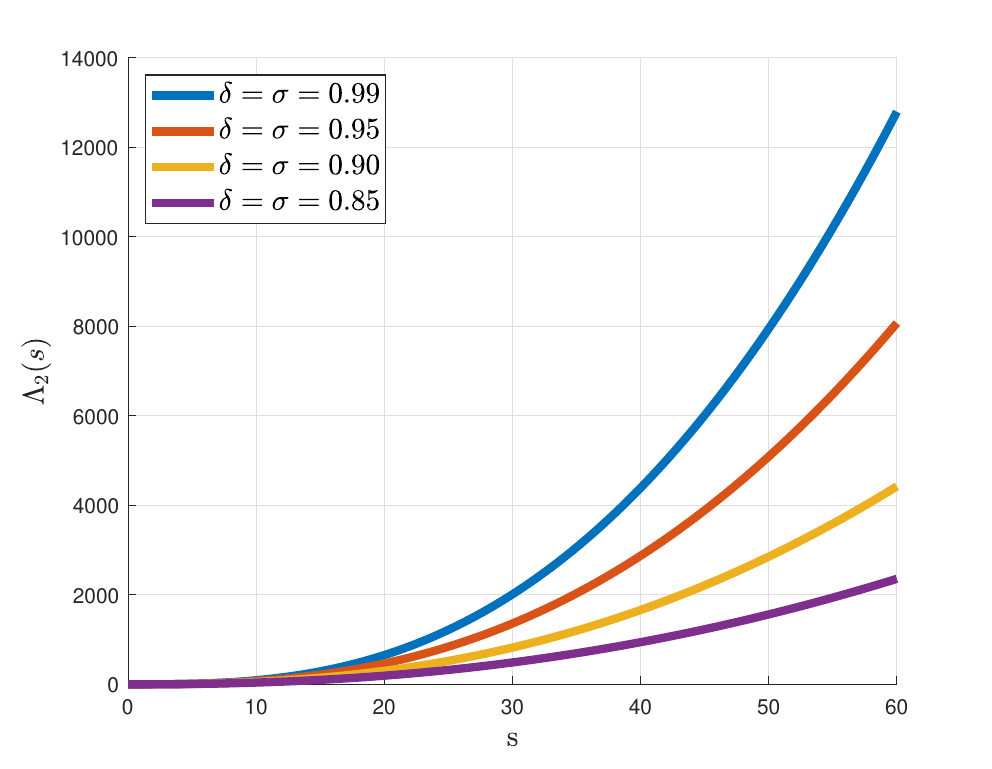}
\label{fig_fraclinear2}}
\newline
\hfill
\subfigure[]{
\includegraphics[width=0.48\linewidth]{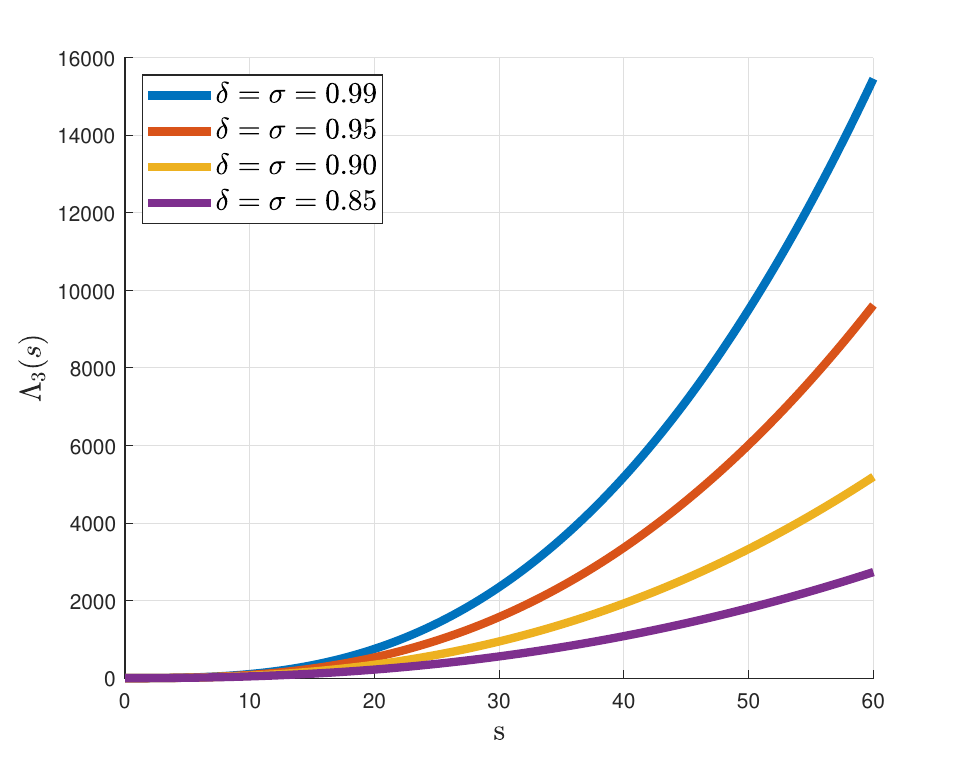}
\label{fig_fraclinear3}}
\caption{Behaviors of (a) $L_{1}(s)$, (b) $L_{2}(s)$
and (c) $L_{3}(s)$ for the linear input model under fractal
and fractional orders $\theta=\sigma=0.85, 0.90, 0.95, 0.99$.}
\label{fig:figfraclinear}
\end{figure}

A word is due about our choice of the values of the fractal-fractional orders. 
We considered fractional orders within the range of $[0.85, 1]$ because 
within this interval we observed consistent behaviors for different 
fractional orders. Specifically, as the fractal-fractional order decreases, 
we noted a proportional reduction in the impact of pollution 
on each lake model at about the same rate. This consistent trend in behavior 
as the fractional order decreases led us to cut the interval at the 
value 0.85. This choice captures the essential aspects of the model's 
response to varying fractional orders and provides a meaningful 
representation of the system dynamics.


\subsection{Exponentially decaying input model}
\label{sec:8.2}

When heavy dumping of pollutant is present,
it makes sense to consider the exponentially decaying input model,
i.e., the case when $c(s)=re^{-ps}$. An example of this case occurs
if every industry placed in a city collects and stores its wastage
during some days and then dumps it to Lake~1 after that stored period.
If we take $r=200$ and $p=10$, then system \eqref{model2} becomes
\begin{equation}
\label{model2x3}
\begin{cases}
{}^{\mathbf{FFML}}\mathcal{D}_{0,s}^{(\theta,\sigma)}{L_1}(s)
= \frac{38}{1180}L_3(s)+200e^{-10s}-\frac{20}{2900}L_1(s)
-\frac{18}{2900}L_1(s),\\[0.3cm]
{}^{\mathbf{FFML}}\mathcal{D}_{0,s}^{(\theta,\sigma)}{L_2}(s)
= \frac{18}{2900}L_1(s)-\frac{18}{850}L_2(s),\\[0.3cm]
{}^{\mathbf{FFML}}\mathcal{D}_{0,s}^{(\theta,\sigma)}{L_3}(s)
=\frac{20}{2900}L_1(s)+\frac{18}{850}L_2(s)
-\frac{38}{1180}L_3(s).
\end{cases}
\end{equation}

The graphical representation of the input function $c(s)$ is illustrated
in Figure~\ref{fig3x} for the exponentially decaying input case
$c(s)=200e^{-10s}$, $s\in[0,1]$.
\begin{figure}[ht!]
\centering
\includegraphics[width=0.6\linewidth]{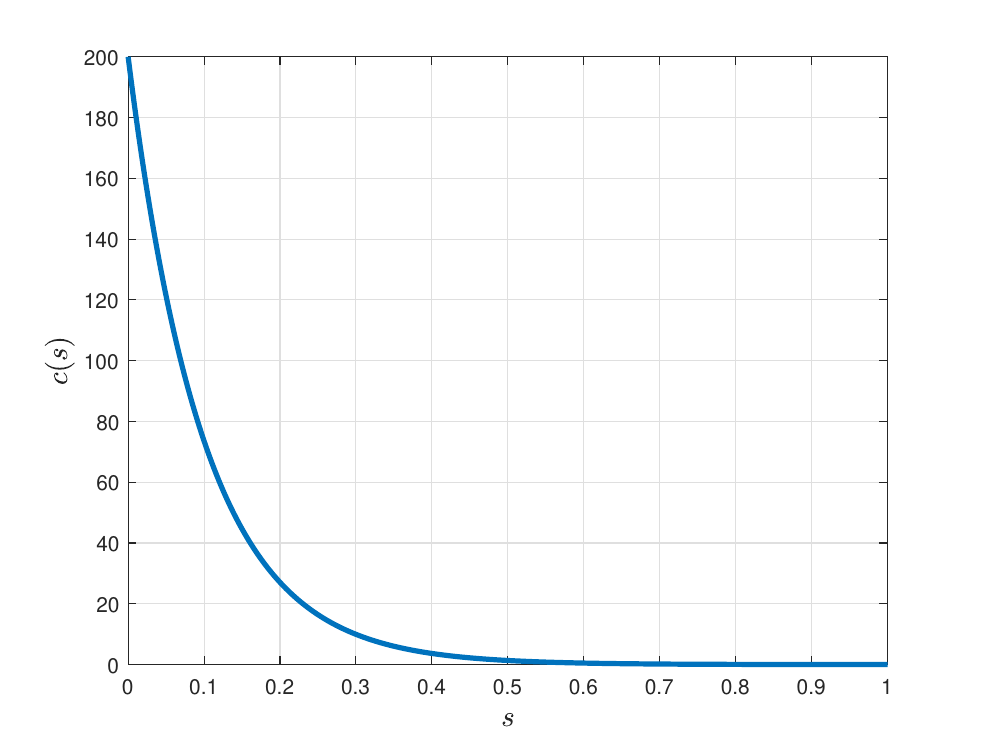}
\caption{Graphic of the exponentially decaying input.}
\label{fig3x}
\end{figure}

In Figures~\ref{fig:figsub3} (a), (b), and (c), the behavior
of the ABM approximations for each pair of the state functions
$L_{1}-L_{2}$, $L_{1}-L_{3}$, and
$L_{2}-L_{3}$, respectively, is shown,
while in Figure \ref{fig:figsub3} (d), the 3D view of $L_{1}-L_{2}-L_{3}$
under the integer-order derivative is graphically illustrated for the exponentially
decaying input model with time $s\in[0,60]$ and step size $h=0.01$.
\begin{figure}[ht!]
\centering
\subfigure[]{
\includegraphics[width=0.48\linewidth]{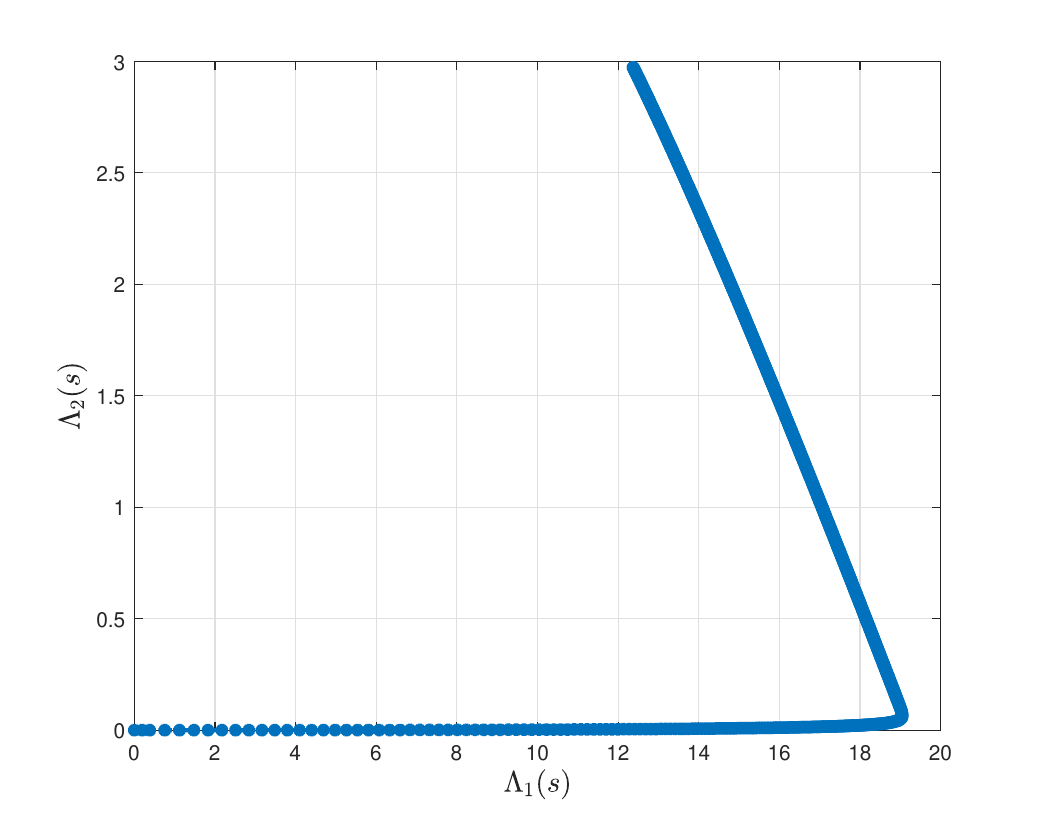}
\label{fig:sub3-first}}
\subfigure[]{
\includegraphics[width=0.48\linewidth]{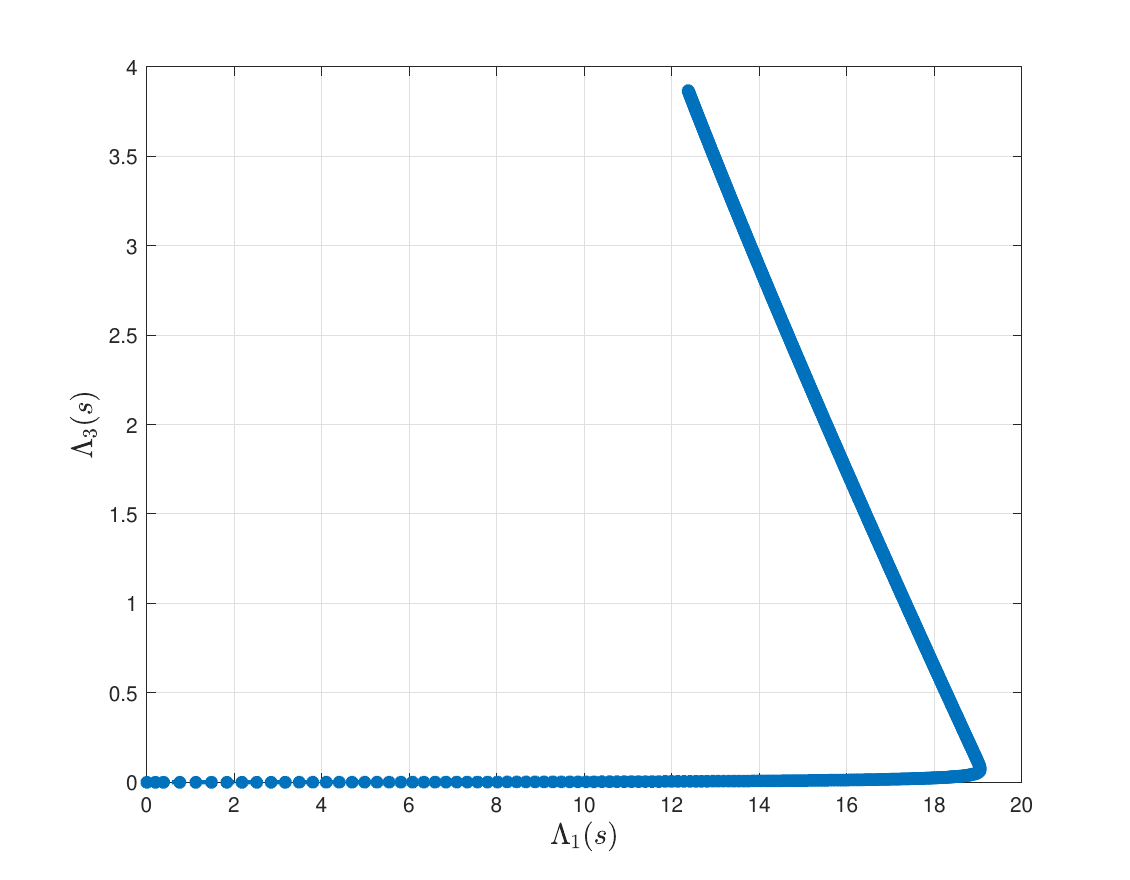}
\label{fig:sub3-second}}
\newline
\subfigure[]{
\includegraphics[width=0.48\linewidth]{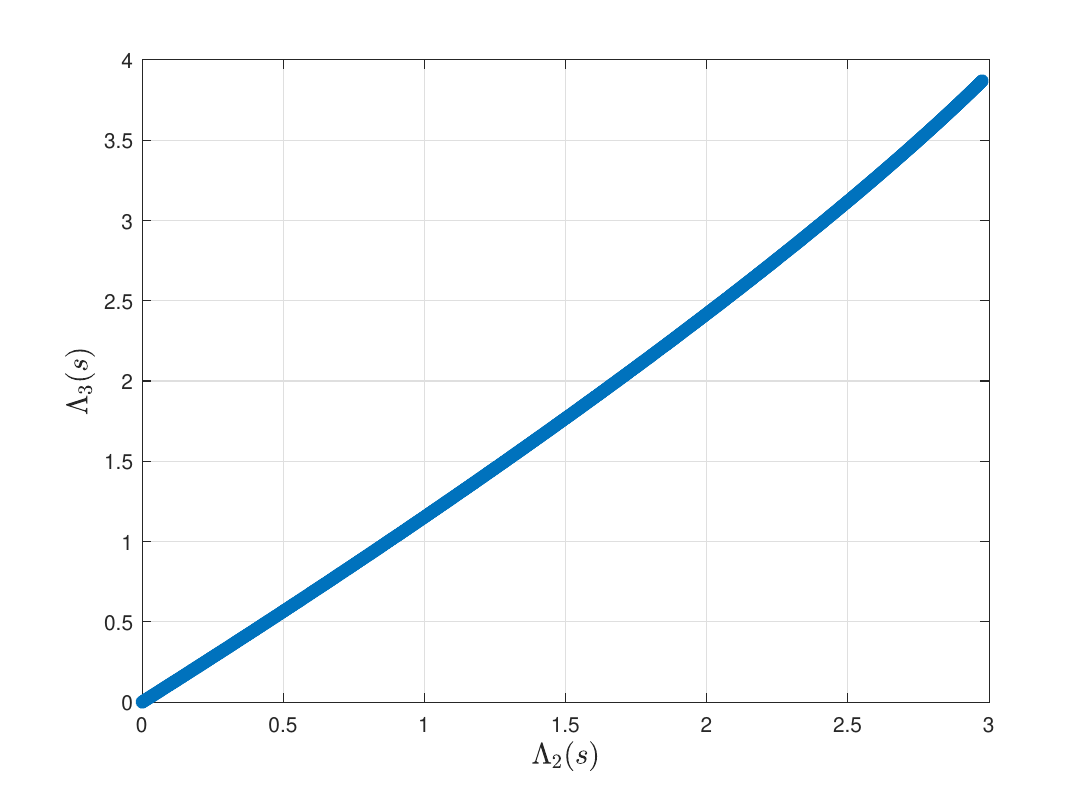}
\label{fig:sub3-third}}
\subfigure[]{
\includegraphics[width=0.48\linewidth]{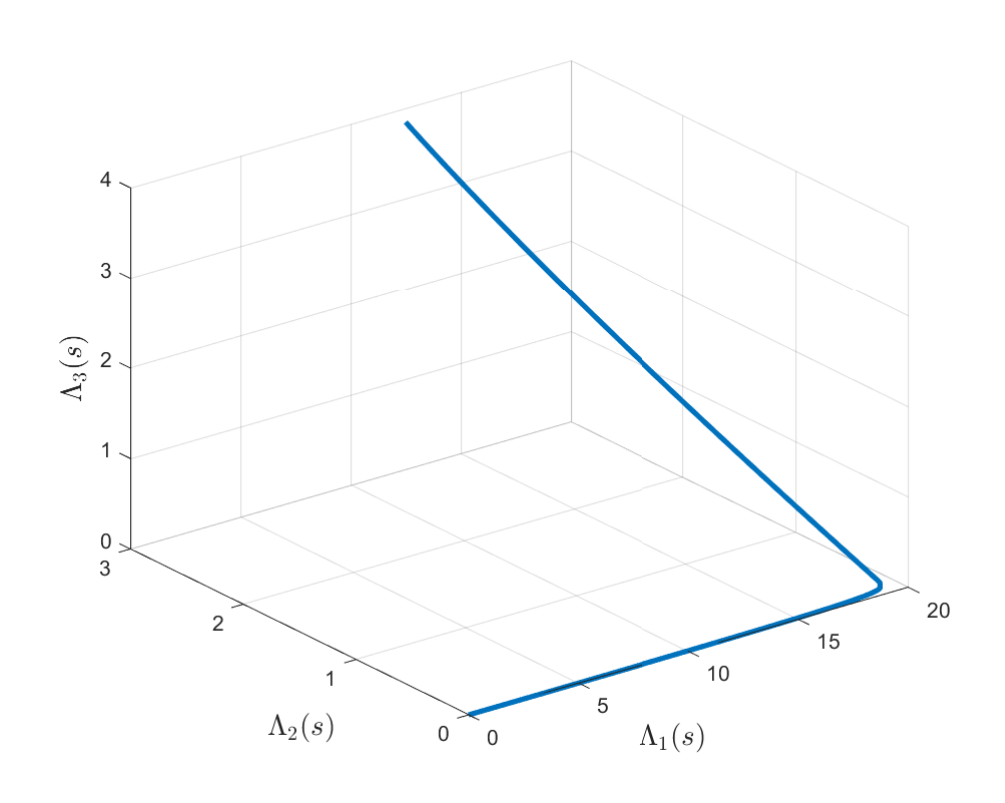}
\label{fig:sub3-fourth}}
\caption{Behaviors of each pair of state functions
(a) $L_{1}-L_{2}$, (b) $L_{1}-L_{3}$,
(c) $L_{2}-L_{3}$ and (d)~3D view of
$L_{1}-L_{2}-L_{3}$ under the integer-order.}
\label{fig:figsub3}
\end{figure}

In Table~\ref{table:2}, we provide a comparison
between the approximate solutions obtained using ABM and NPM
for the exponentially decaying input case with time $s\in[0,10]$,
step size $h=0.01$, and $\theta=\sigma=1$. From Table~\ref{table:2},
we can conclude that the ABM approximations are also in good agreement
with the NPM ones for the exponentially decaying input model.
\begin{table}[ht!]
\begin{minipage}[t]{1\linewidth}
\makeatletter\def\@captype{table}\makeatother
\caption{Numerical comparison between ABM and NPM
for the exponentially decaying input case.}
\label{table:2}
\begin{center}
\begin{tabular}{c c c c c c c}
\toprule
\multirow{2.5}{*}{$s$} & \multicolumn{3}{c}{Adams--Bashforth}
& \multicolumn{3}{c}{Newton Polynomials}\\ \cmidrule(lr){2-4}
\cmidrule(lr){5-7} 	& $L_{1}$ &  $L_{2}$  &  $L_{3}$
& $L_{1}$  &  $L_{2}$  &  $L_{3}$\\ \midrule
$0$ & $0.000000$ & $0.00000$ & $0.000000$ & $0.000000$ & $0.000000$ & $0.000000$\\				
$1$ & $18.988642$ & $0.105437$ & $0.117578$ & $18.988592$ & $0.105145$ & $0.117021$\\				
$2$ & $18.748146$ & $0.219107$ & $0.245295$ & $18.752821$ & $0.216590$ & $0.242237$\\				
$3$ & $18.513934$ & $0.328934$ & $0.369679$ & $18.523194$ & $0.324269$ & $0.364185$\\				
$4$ & $18.286698$ & $0.435044$ & $0.490805$ & $18.300406$ & $0.428303$ & $0.482940$\\				
$5$ & $18.066244$ & $0.537556$ & $0.608748$ & $18.084267$ & $0.528808$ & $0.598573$\\				
$6$ & $17.852383$ & $0.636586$ & $0.723579$ & $17.874593$ & $0.625901$ & $0.711155$\\				
$7$ & $17.644931$ & $0.732247$ & $0.835369$ & $17.671201$ & $0.719691$ & $0.820757$\\				
$8$ & $17.443709$ & $0.824649$ & $0.944190$ & $17.473918$ & $0.810285$ & $0.927446$\\				
$9$ & $17.248540$ & $0.913898$ & $1.050109$ & $17.282570$ & $0.897787$ & $1.031291$\\				
$10$ & $17.059256$ & $1.000097$ & $1.153195$ & $17.096990$ & $0.982299$ & $1.132359$\\[1ex]
\bottomrule
\end{tabular}
\end{center}
\end{minipage}
\end{table}

In Figure~\ref{fig:fig6}, we present a graphical comparison between
ABM and NPM approximations for the state functions $L_{1}$,
$L_{2}$, and $L_{3}$ in the exponentially decaying input case
with time $s\in[0,60]$. From Figure~\ref{fig:fig6}, it can be concluded that
the two introduced methods strongly agree with each other even in a large
time domain of 60 years.
\begin{figure}[ht!]
\centering
\subfigure[]{
\includegraphics[width=0.48\linewidth]{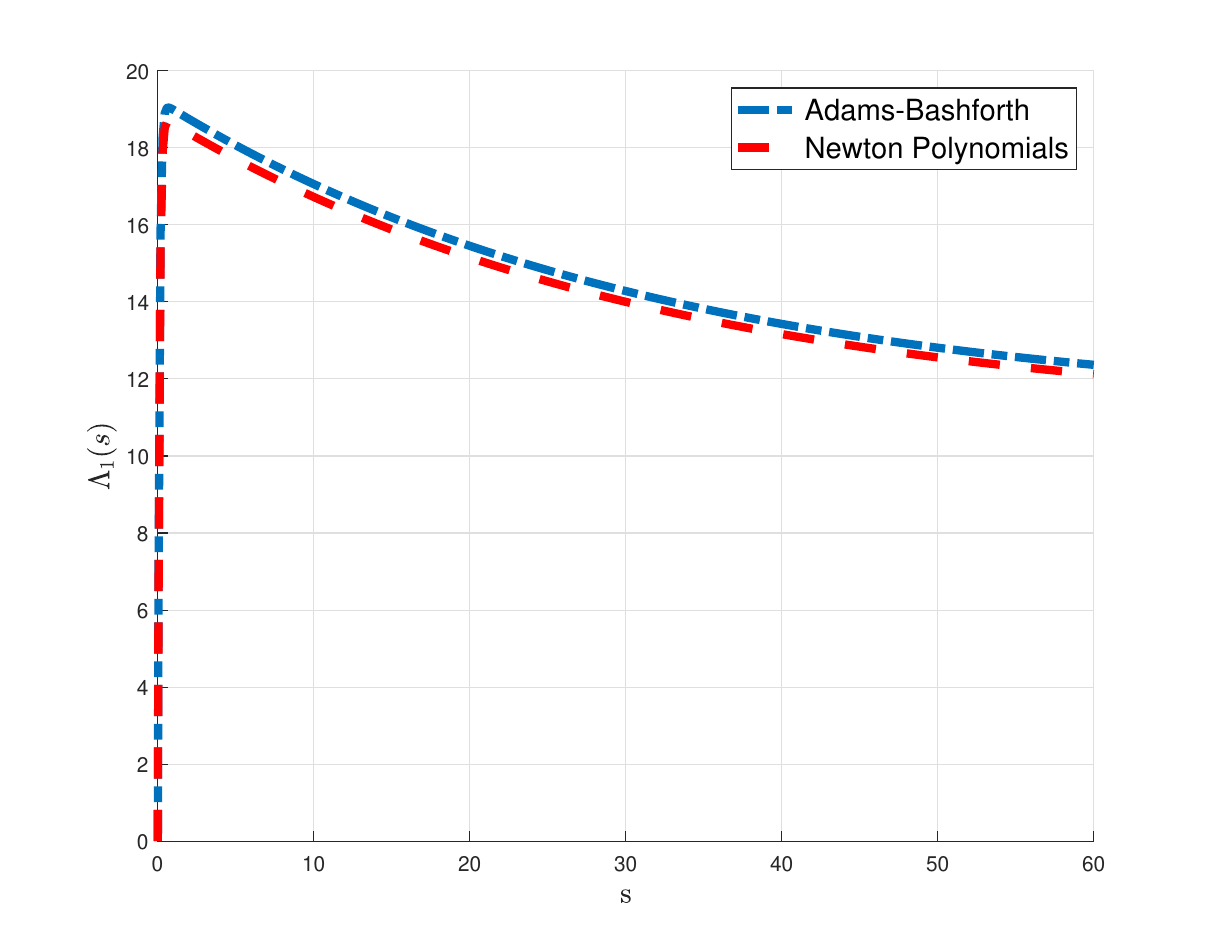}
\label{fig_exponential1}}
\subfigure[]{
\includegraphics[width=0.48\linewidth]{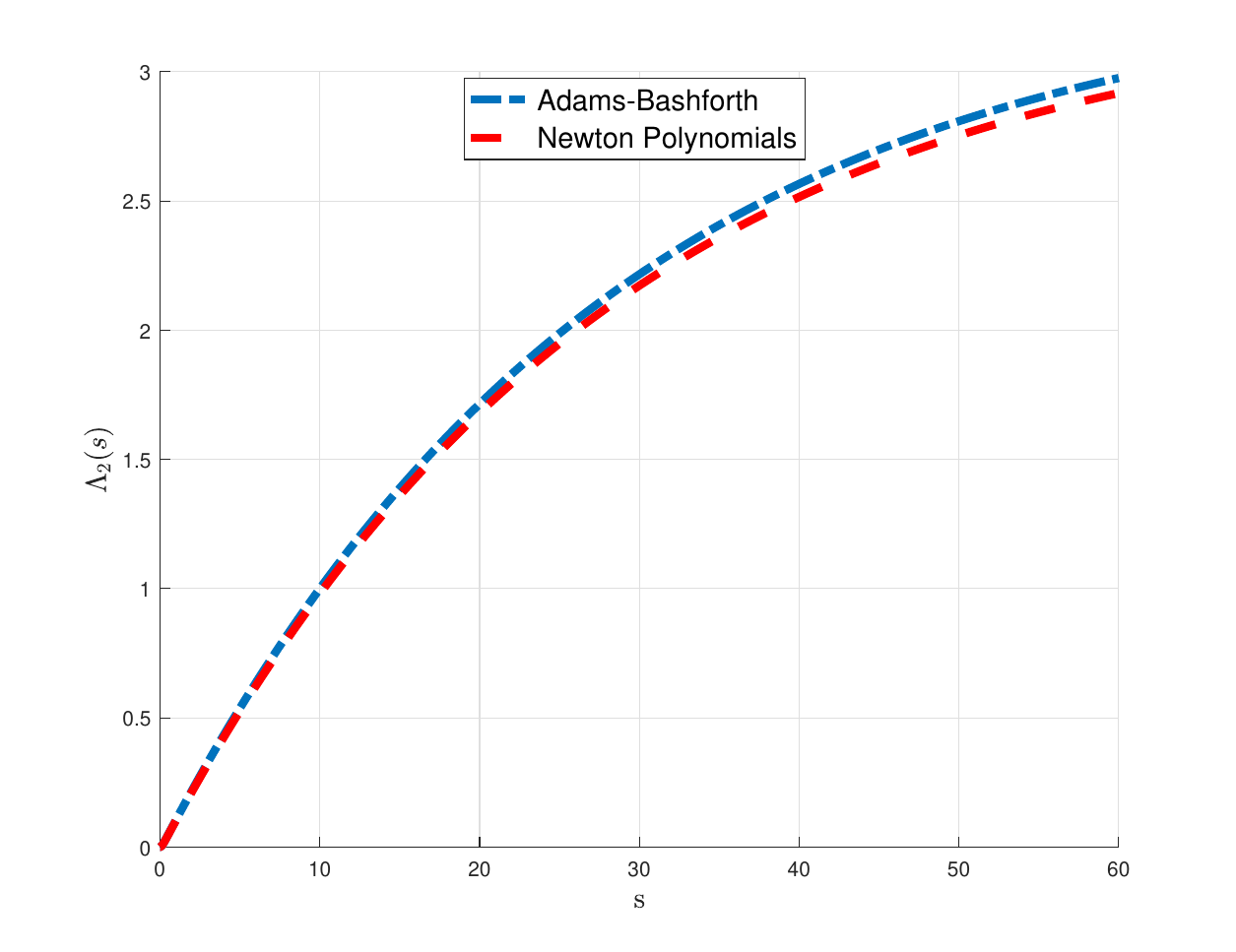}
\label{fig_exponential2}}
\newline
\hfill
\subfigure[]{
\includegraphics[width=0.48\linewidth]{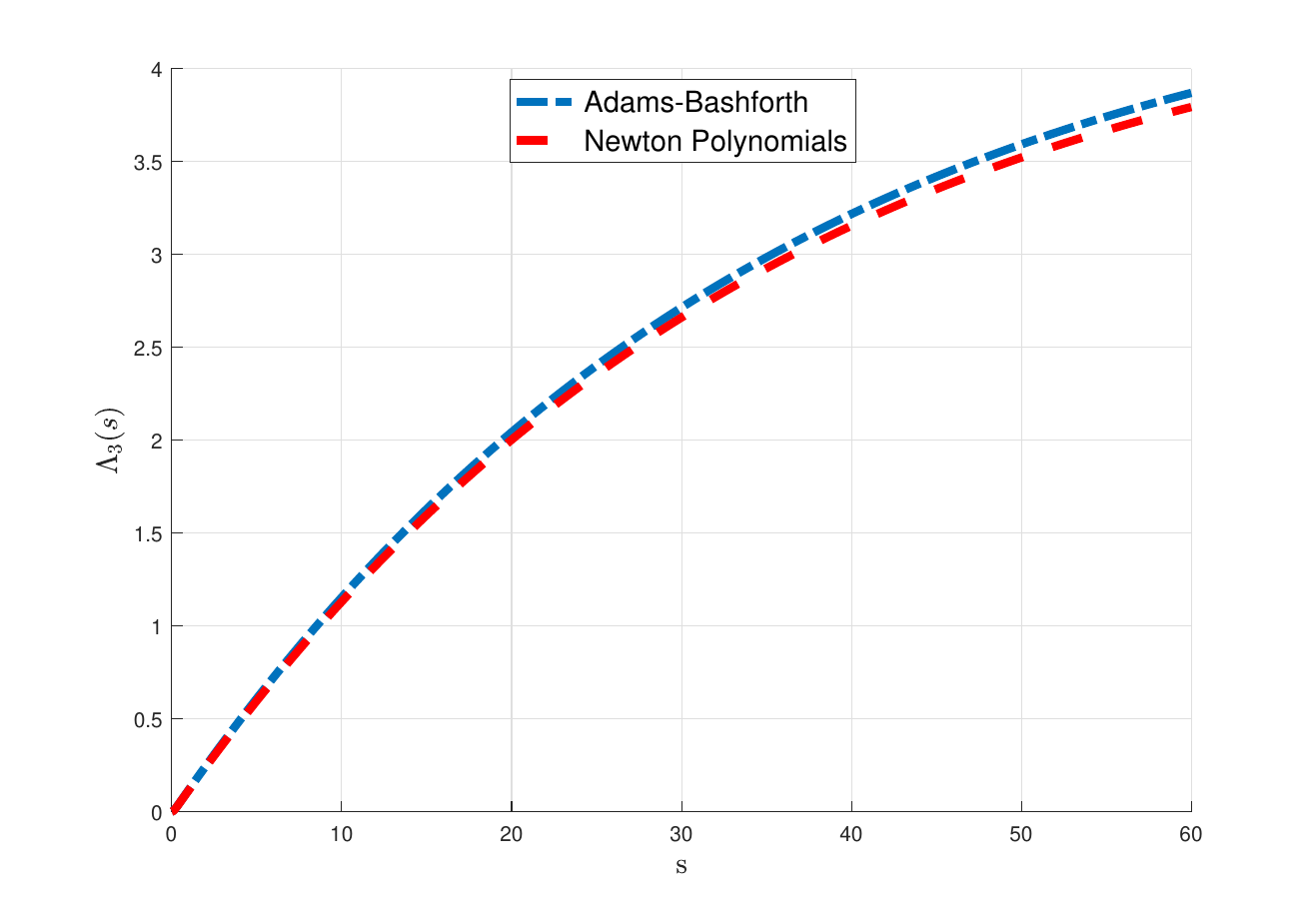}
\label{fig_exponential3}}
\caption{Comparison between the ABM and NPM for
(a) $L_{1}(s)$, (b) $L_{2}(s)$
and (c) $L_{3}$ in the exponentially decaying input model.}
\label{fig:fig6}
\end{figure}

In Figure~\ref{fig:figfracexpo}, we illustrate the numerical results
of the three state variables $L_{1}, L_{2}$, and $L_{3}$
for the exponentially decaying input model when the ABM is applied under
various fractal-fractional orders: $\theta=\sigma=0.85, 0.90, 0.95, 0.99$.
Figure~\ref{fig:figfracexpo} shows that the non-integer fractal-fractional operators
have an effect on decreasing the amount of pollution for each model when the time $s$ increases,
that is, the pollution is increasing harmoniously with the fractal-fractional order,
getting closer to the integer-order case.
\begin{figure}[ht!]
\centering
\subfigure[]{
\includegraphics[width=0.48\linewidth]{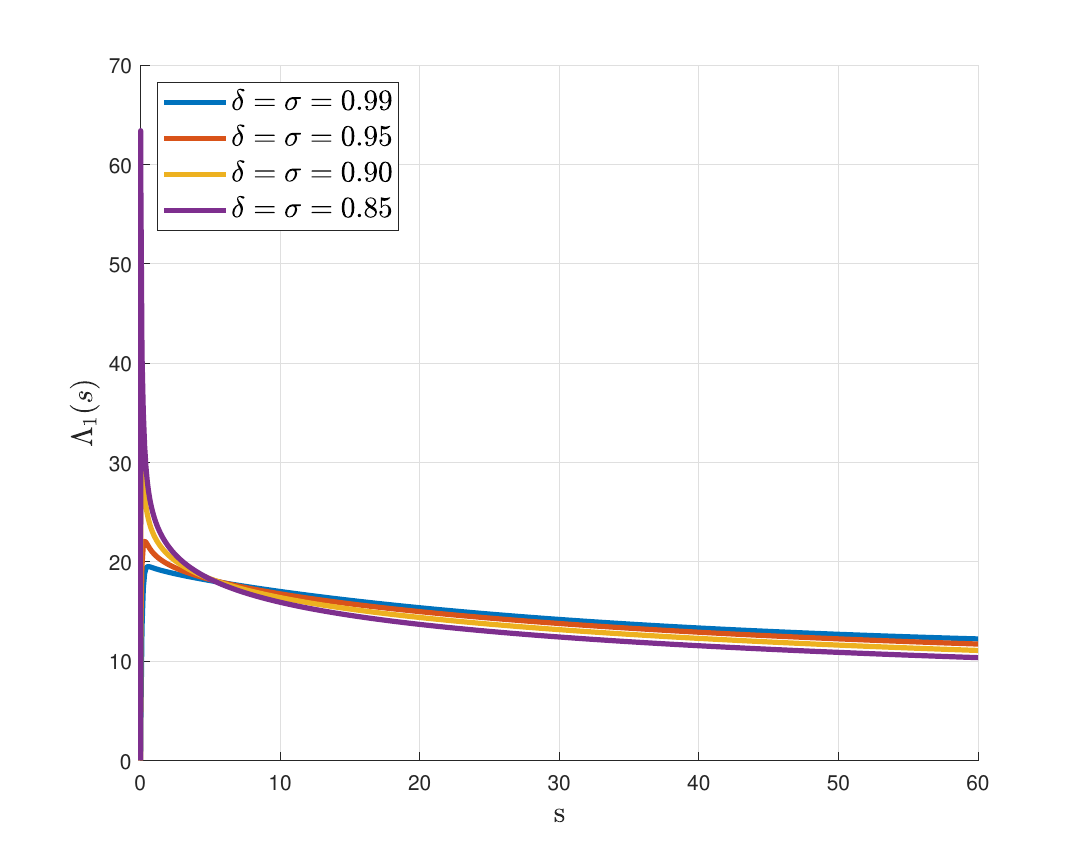}
\label{fig_fracexpo1}}
\subfigure[]{
\includegraphics[width=0.48\linewidth]{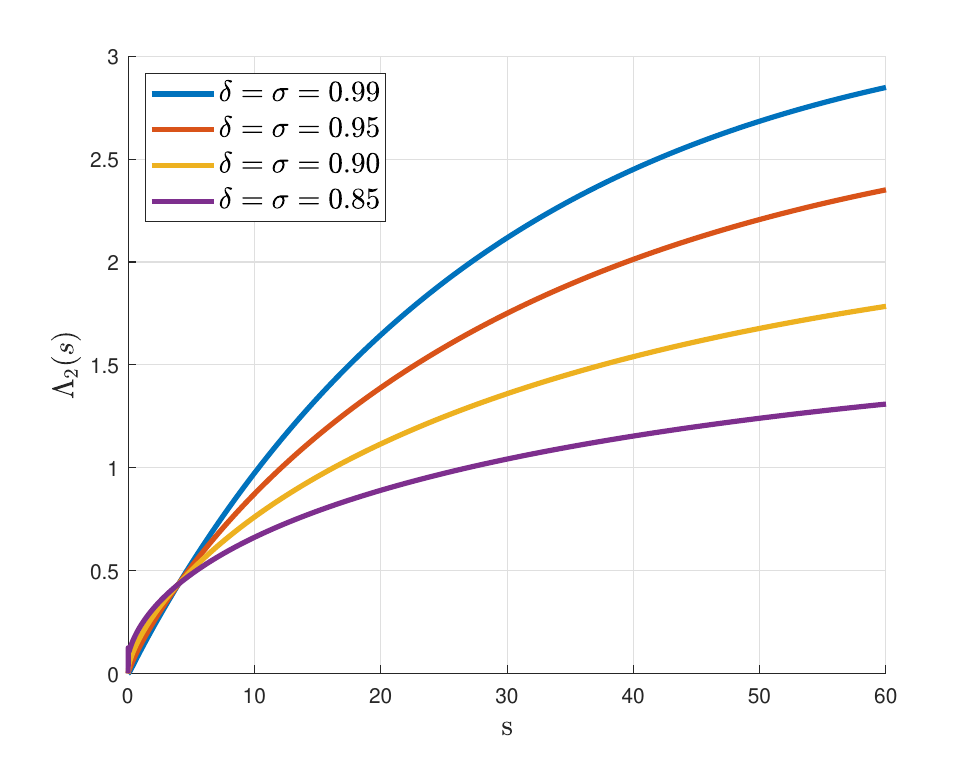}
\label{fig_fracexpo2}}
\newline
\hfill
\subfigure[]{
\includegraphics[width=0.48\linewidth]{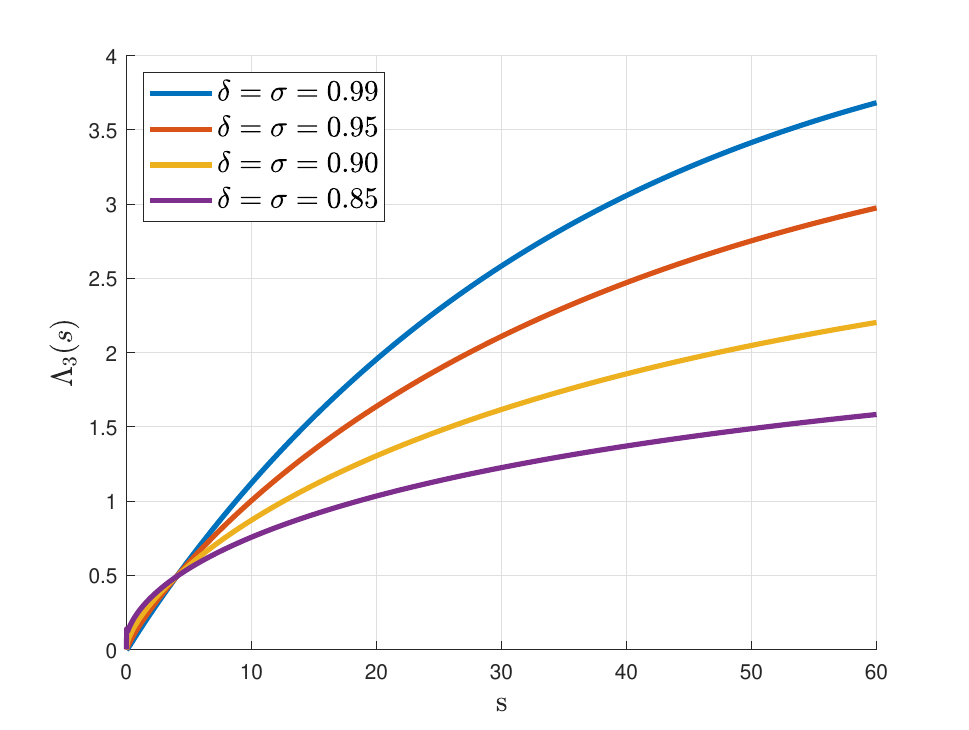}
\label{fig_fracexpo3}}
\caption{Behaviors of (a) $L_{1}(s)$, (b) $L_{2}(s)$ and
(c) $L_{3}(s)$ for the exponentially decaying input model under
fractal and fractional orders $\theta=\sigma=0.85, 0.90, 0.95, 0.99$.}
\label{fig:figfracexpo}
\end{figure}


\subsection{Periodic input model}
\label{sec:8.3}

As a last case of study, we consider a periodic input model in which
the pollutant appears in the lake periodically. A factory
that works during daytime only, can be an example of this case:
it generates waste and dump it in the lakes during the day
while at night the mixing of new pollutants stops.
For a concrete case, we selected $c(s)=a+\tau \sin(bs)$,
where $\tau$ and $b$ stands for the variations of amplitude and frequency,
respectively. Also, $a$ is considered as the average input of pollutant concentration.
In such a case $a=b=\tau=1$, system \eqref{model2} takes the following form:
\begin{equation}
\label{model2x4}
\begin{cases}
{}^{\mathbf{FFML}}\mathcal{D}_{0,s}^{(\theta,\sigma)}{L_1}(s)
= \frac{38}{1180}L_3(s)+1+\sin(s)-\frac{20}{2900}L_1(s)
-\frac{18}{2900}L_1(s),\\[0.3cm]
{}^{\mathbf{FFML}}\mathcal{D}_{0,s}^{(\theta,\sigma)}{L_2}(s)
= \frac{18}{2900}L_1(s)-\frac{18}{850}L_2(s),\\[0.3cm]
{}^{\mathbf{FFML}}\mathcal{D}_{0,s}^{(\theta,\sigma)}{L_3}(s)
=\frac{20}{2900}L_1(s)+\frac{18}{850}L_2(s)-\frac{38}{1180}L_3(s).
\end{cases}
\end{equation}

The graphical representation of the input function $c(s)$ is illustrated
in Figure~\ref{fig4x} for the periodic input case
$c(s)=1+\sin(s)$, $s\in[0,20]$.
\begin{figure}[ht!]
\centering
\includegraphics[width=0.6\linewidth]{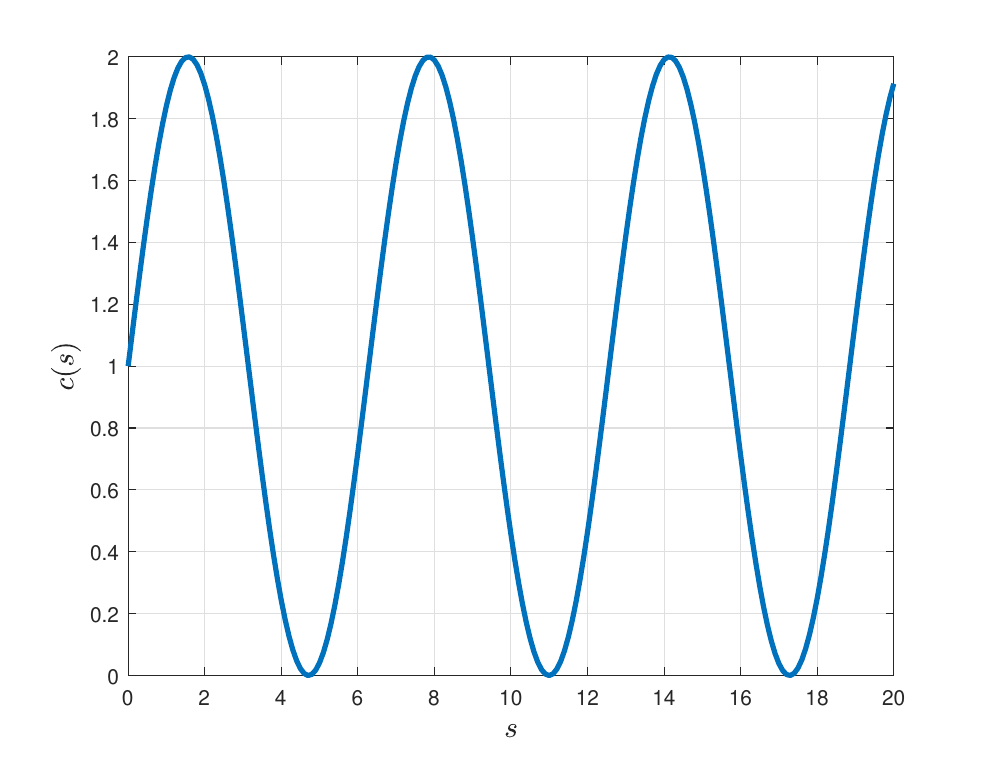}
\caption{Graphic of the periodic input.}
\label{fig4x}
\end{figure}
\begin{figure}[ht!]
\centering
\subfigure[]{
\includegraphics[width=0.48\linewidth]{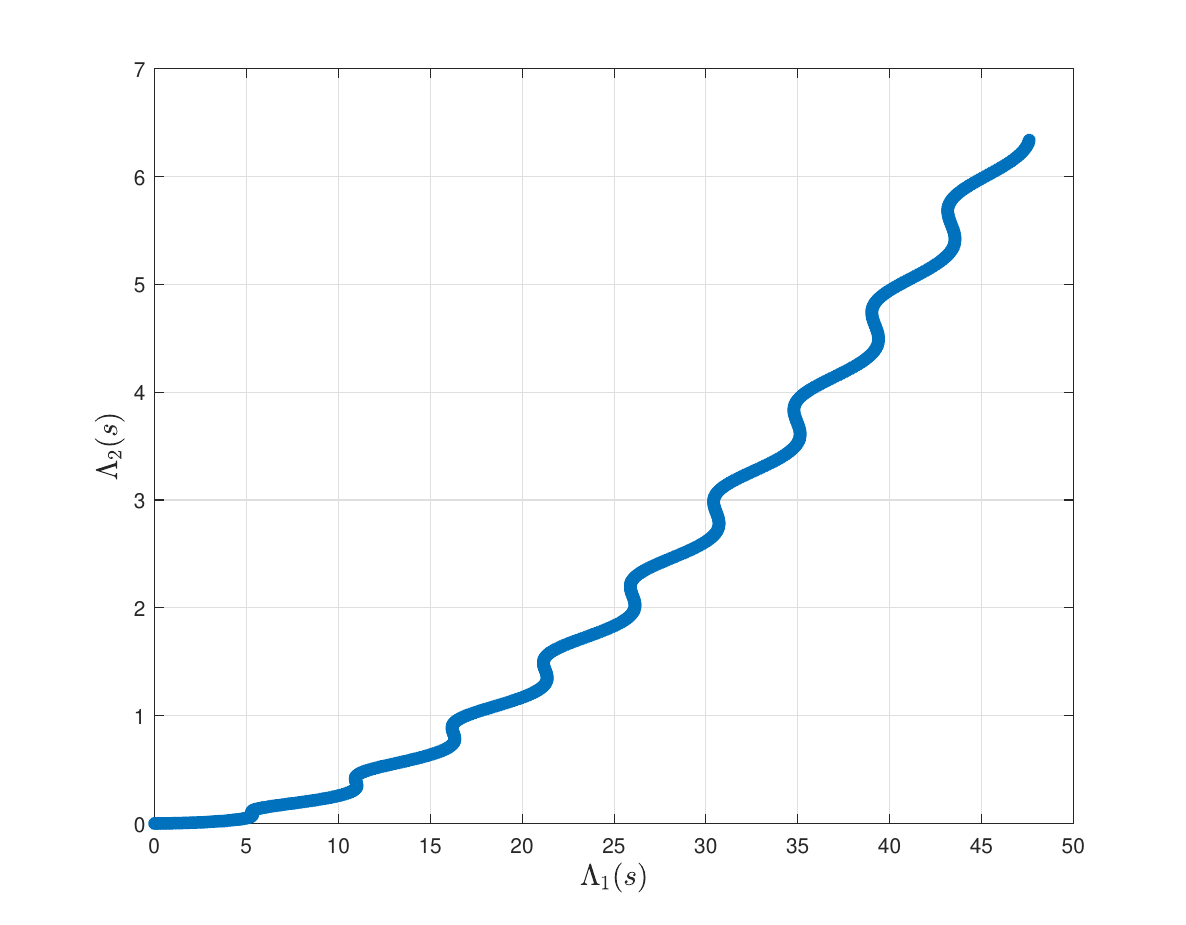}
\label{fig:sub4-first}}
\subfigure[]{
\includegraphics[width=0.48\linewidth]{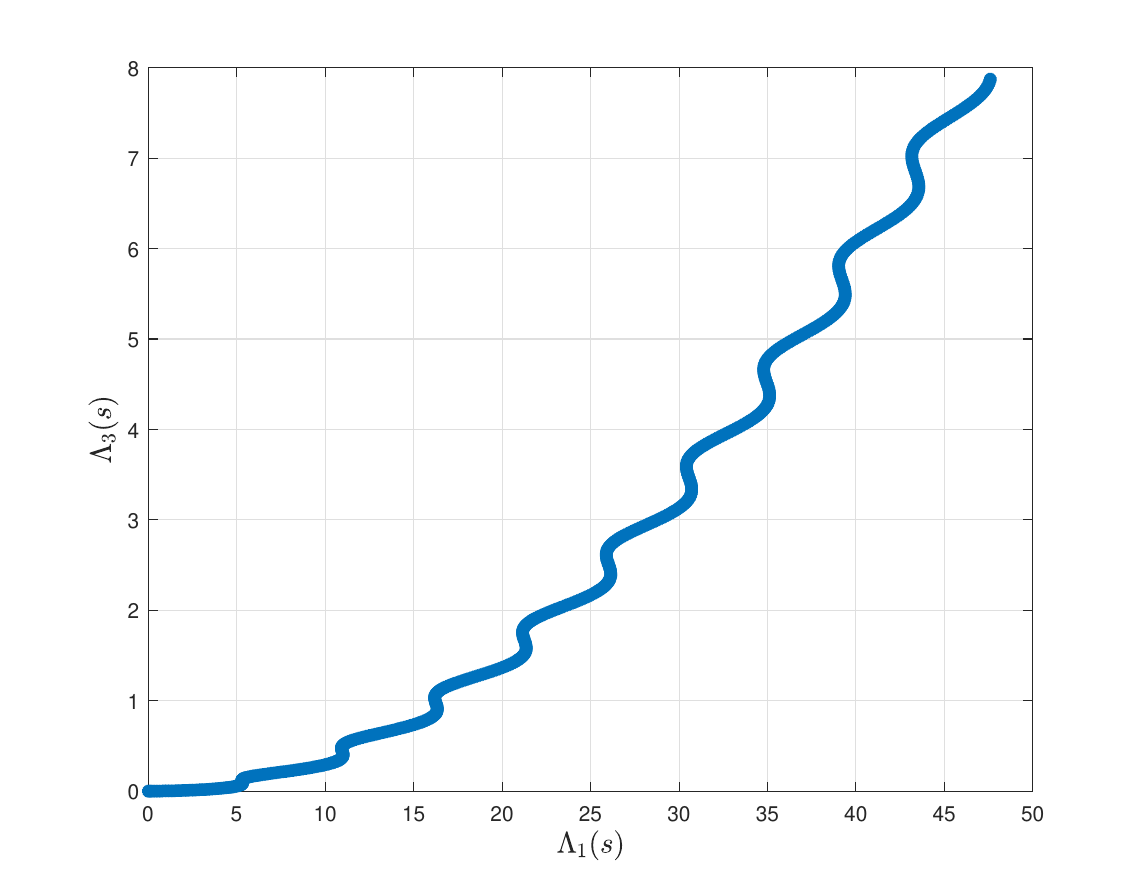}
\label{fig:sub4-second}}
\newline
\subfigure[]{
\includegraphics[width=0.48\linewidth]{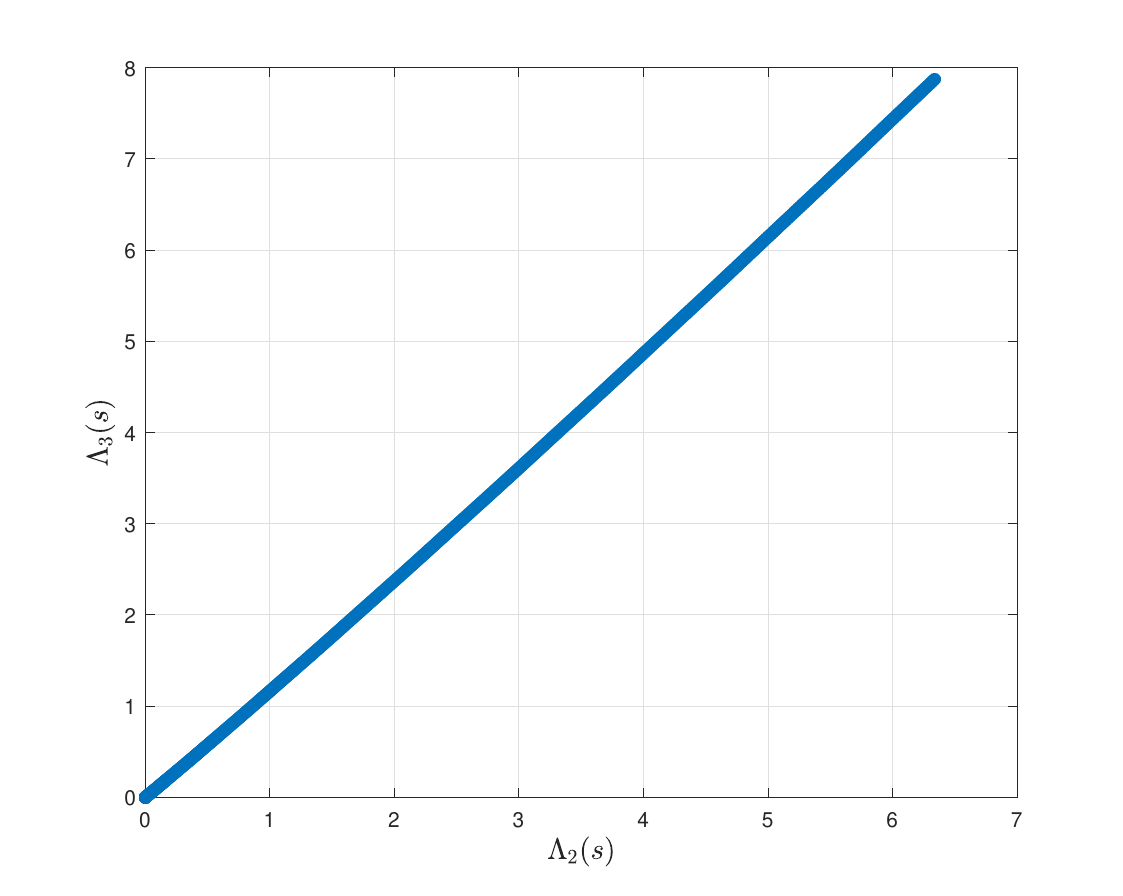}
\label{fig:sub4-third}}
\subfigure[]{
\includegraphics[width=0.48\linewidth]{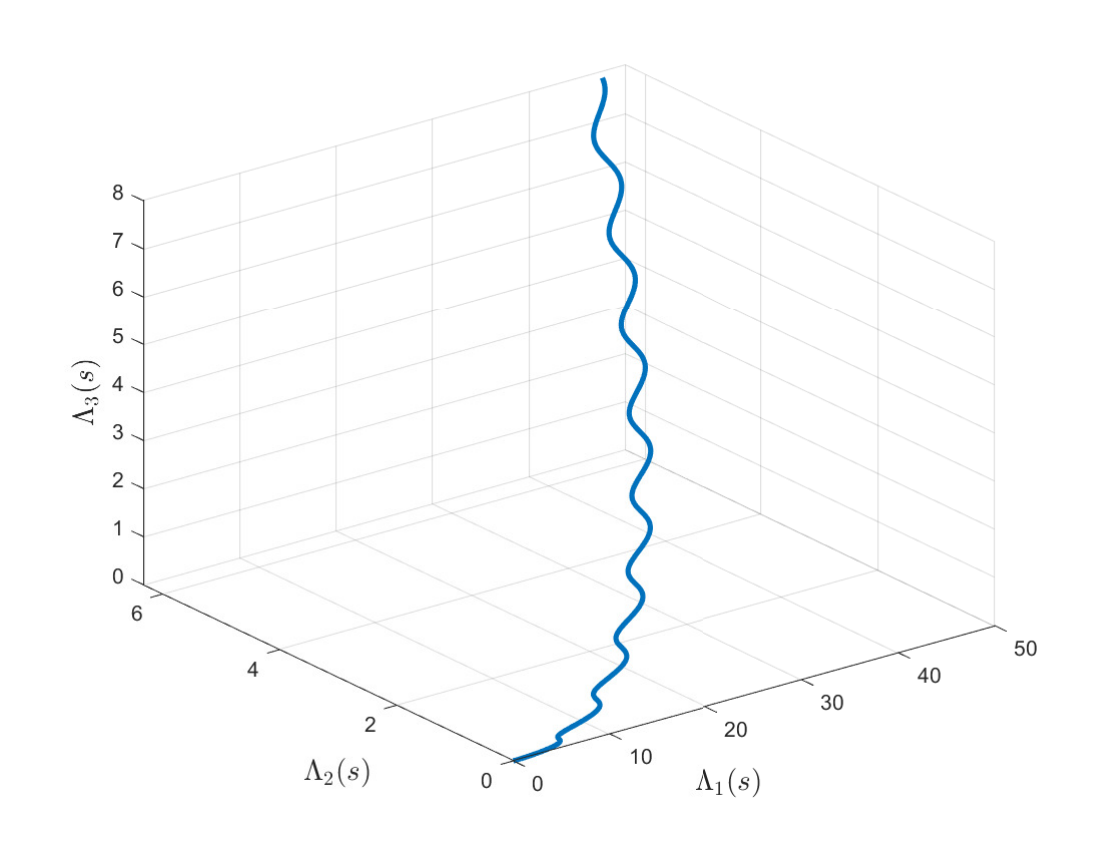}
\label{fig:sub4-fourth}}
\caption{Behaviors of each pair of state functions
(a) $L_{1}-L_{2}$, (b) $L_{1}-L_{3}$,
(c) $L_{2}-L_{3}$ and
(d)~3D view of $L_{1}-L_{2}-L_{3}$ under the integer-order.}
\label{fig:figsub4}
\end{figure}

In Figures~\ref{fig:figsub4} (a), (b), and (c),
the graphical behavior of each pair of the state functions
$L_{1}-L_{2}$, $L_{1}-L_{3}$, and $L_{2}-L_{3}$,
respectively, is shown. In Figure~\ref{fig:figsub4} (d), the 3D view of
$L_{1}-L_{2}-L_{3}$ under the integer-order derivative
is illustrated for the periodic input model
with time $s\in[0,60]$ and step size $h=0.1$.

The tabular comparison between the numerical results obtained from the proposed
techniques, ABM and NPM, for the three state functions $L_{1}$, $L_{2}$,
and $L_{3}$ under the periodic input case, are reported in Table~\ref{table:3}
for time $s\in[0,10]$, step size $h=0.1$, and $\theta=\sigma=1$.
From these results, we conclude that the solutions obtained
by ABM and NPM highly agree with each other.
\begin{table}[ht!]
\begin{minipage}[t]{1\linewidth}
\makeatletter\def\@captype{table}\makeatother
\caption{Comparison between ABM and NPM in the periodic input case.}
\label{table:3}
\begin{center}
\begin{tabular}{c c c c c c c} \toprule
\multirow{2.5}{*}{$s$} & \multicolumn{3}{c}{Adams--Bashforth}
& \multicolumn{3}{c}{Newton Polynomials}\\
\cmidrule(lr){2-4} \cmidrule(lr){5-7} 	& $L_{1}$
&  $L_{2}$  &  $L_{3}$  &  $L_{1}$
&  $L_{2}$  &  $L_{3}$\\ \midrule
$0$ & $0.000000$ & $0.00000$ & $0.000000$ & $0.000000$ & $0.000000$ & $0.000000$\\
$1$ & $1.420794$ & $0.003639$ & $0.004053$ & $1.420289$ & $0.003637$ & $0.004052$\\
$2$ & $3.352256$ & $0.018270$ & $0.020396$ & $3.349409$ & $0.017505$ & $0.019538$\\				
$3$ & $4.795526$ & $0.043383$ & $0.048562$ & $4.791798$ & $0.041836$ & $0.046821$\\
$4$ & $5.304589$ & $0.073971$ & $0.083050$ & $5.303706$ & $0.071660$ & $0.080440$\\
$5$ & $5.281611$ & $0.104988$ & $0.118274$ & $5.286096$ & $0.101974$ & $0.114856$\\
$6$ & $5.607511$ & $0.135825$ & $0.153543$ & $5.616323$ & $0.132176$ & $0.149388$\\
$7$ & $6.832362$ & $0.170701$ & $0.193553$ & $6.841809$ & $0.166454$ & $0.188698$\\
$8$ & $8.669954$ & $0.214629$ & $0.243923$ & $8.677051$ & $0.209775$ & $0.238353$\\
$9$ & $10.261233$ & $0.268650$ & $0.305891$ & $10.266407$ & $0.263162$ & $0.299573$\\				
$10$ & $10.964396$ & $0.328730$ & $0.375035$ & $10.971053$ & $0.322611$ & $0.367966$\\[1ex]
\bottomrule
\end{tabular}
\end{center}
\end{minipage}
\end{table}

In Figure~\ref{fig:fig8}, we illustrate our findings graphically,
comparing the numerical results from ABM and NPM
for each state function $L_{1}$, $L_{2}$, and $L_{3}$,
where the Lake~1 has a periodic pollutant input. Figure~\ref{fig:fig8}
shows that the two introduced techniques, ABM and NPM, strongly agree
with each other for the time $s\in [0,60]$, step size $h=0.1$, and $\theta=\sigma=1$.
\begin{figure}[ht!]
\centering
\subfigure[]{
\includegraphics[width=0.48\linewidth]{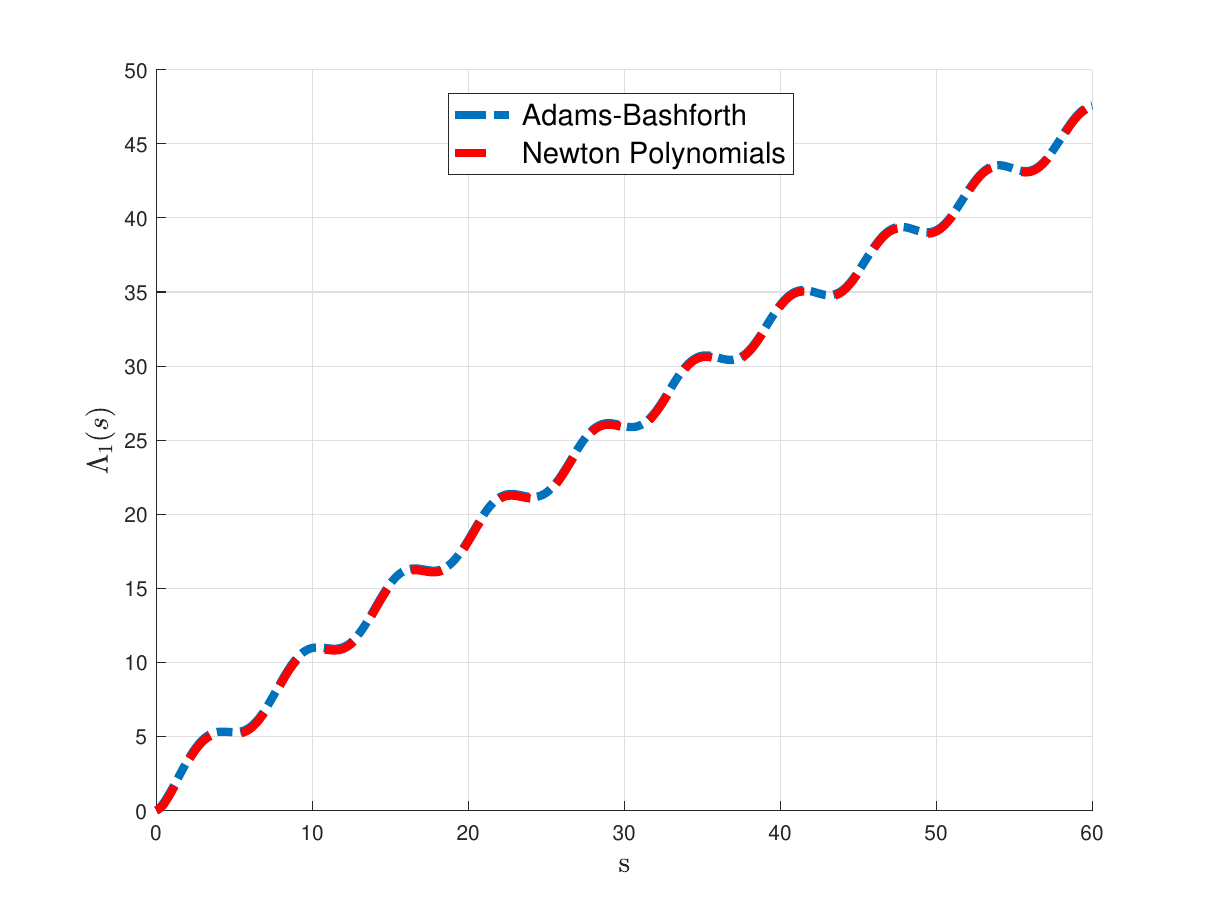}
\label{fig_periodic1}}
\subfigure[]{
\includegraphics[width=0.48\linewidth]{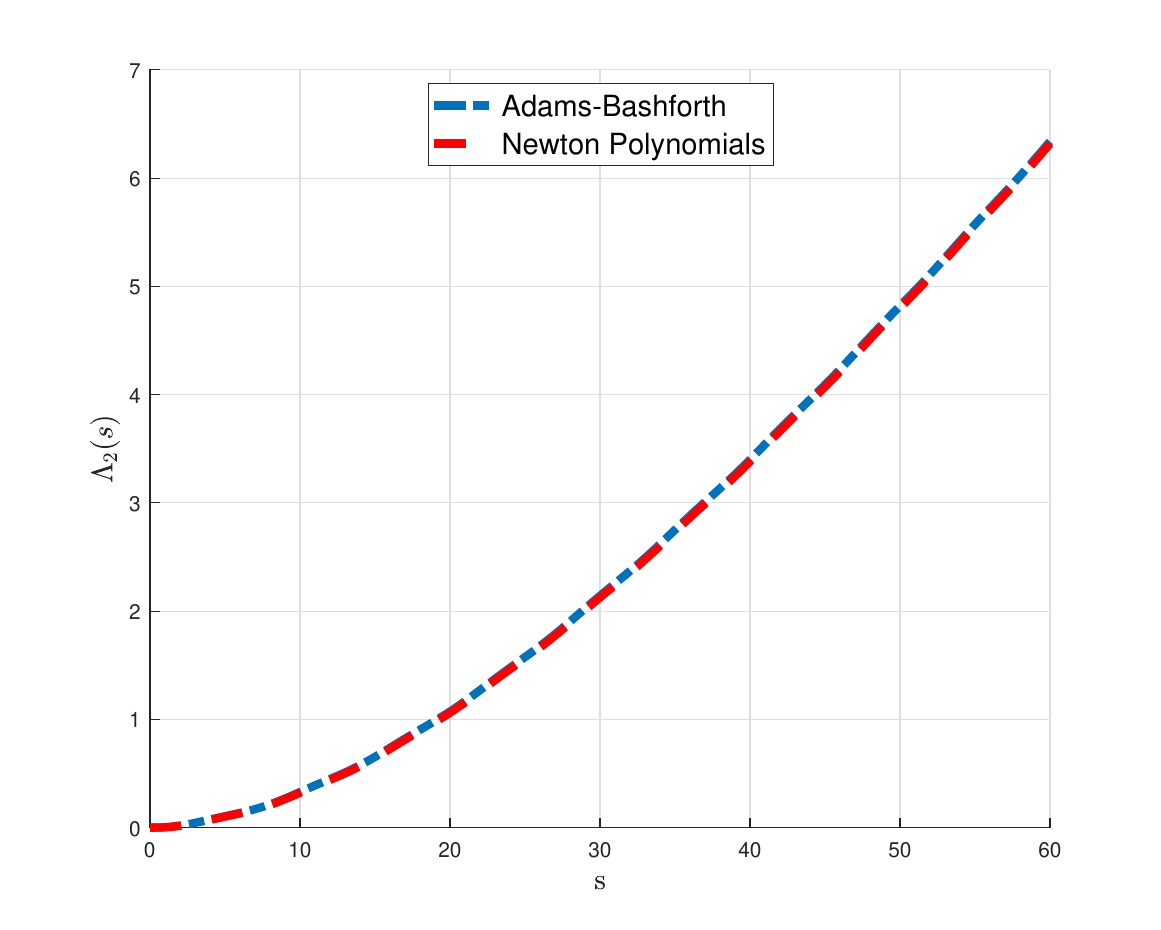}
\label{fig_periodic2}}
\newline
\hfill
\subfigure[]{
\includegraphics[width=0.48\linewidth]{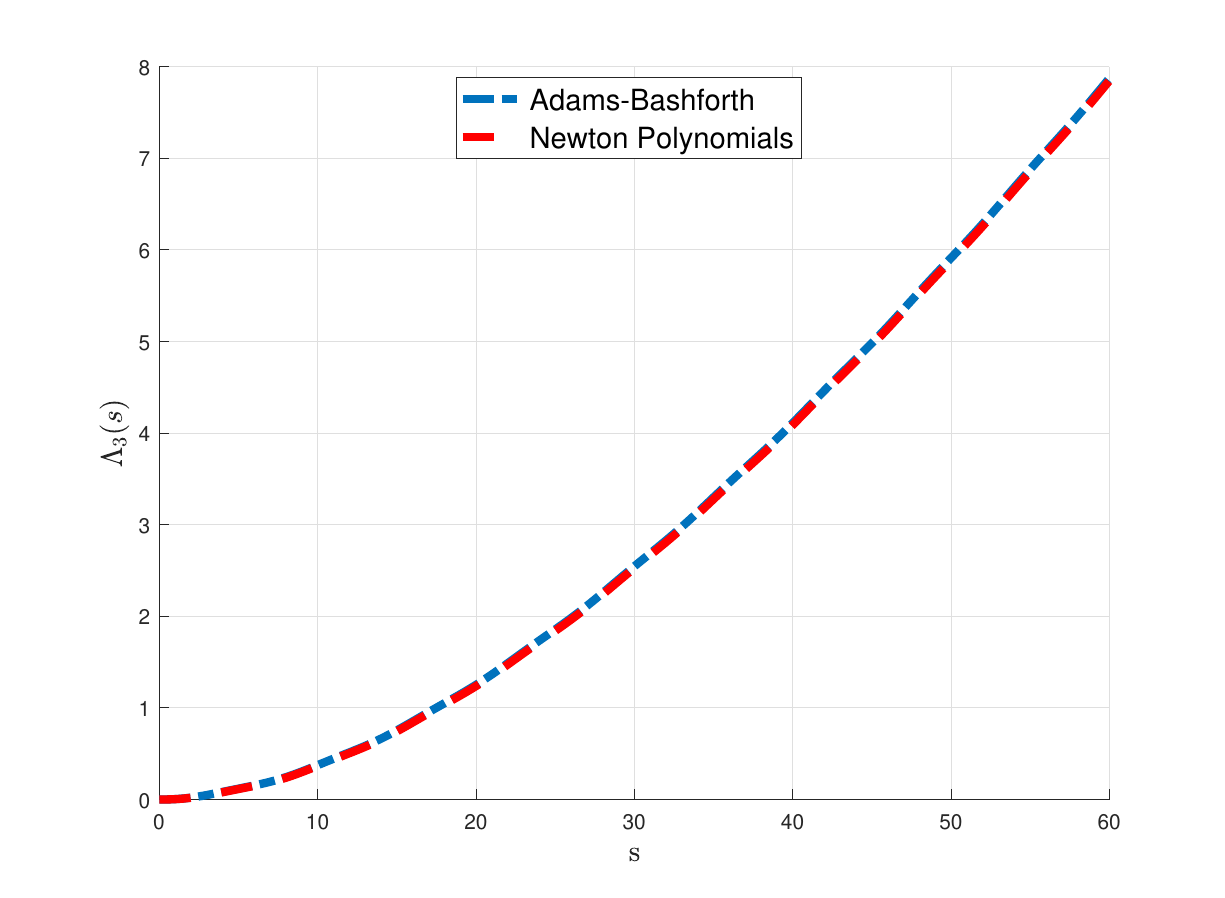}
\label{fig_periodic3}}
\caption{Comparison between the ABM and NPM for (a) $L_{1}(s)$,
(b) $L_{2}(s)$ and (c) $L_{3}$ in the periodic input model.}
\label{fig:fig8}
\end{figure}

In Figure~\ref{fig:figfracperiod}, we illustrate the ABM approximations
of the three state functions $L_{1}$, $L_{2}$, and $L_{3}$
under various fractal-fractional orders: $\theta=\sigma=0.85$, $0.90$,
$0.95$, $0.99$ for the periodic input case. Similar to cases
of Sections~\ref{sec:8.1} and \ref{sec:8.2}, we observe that the non-integer order
fractal-fractional operators have a great effect on decreasing the
amount of contamination for each model while the time $s$ increases.
\begin{figure}[ht!]
\centering
\subfigure[]{
\includegraphics[width=0.48\linewidth]{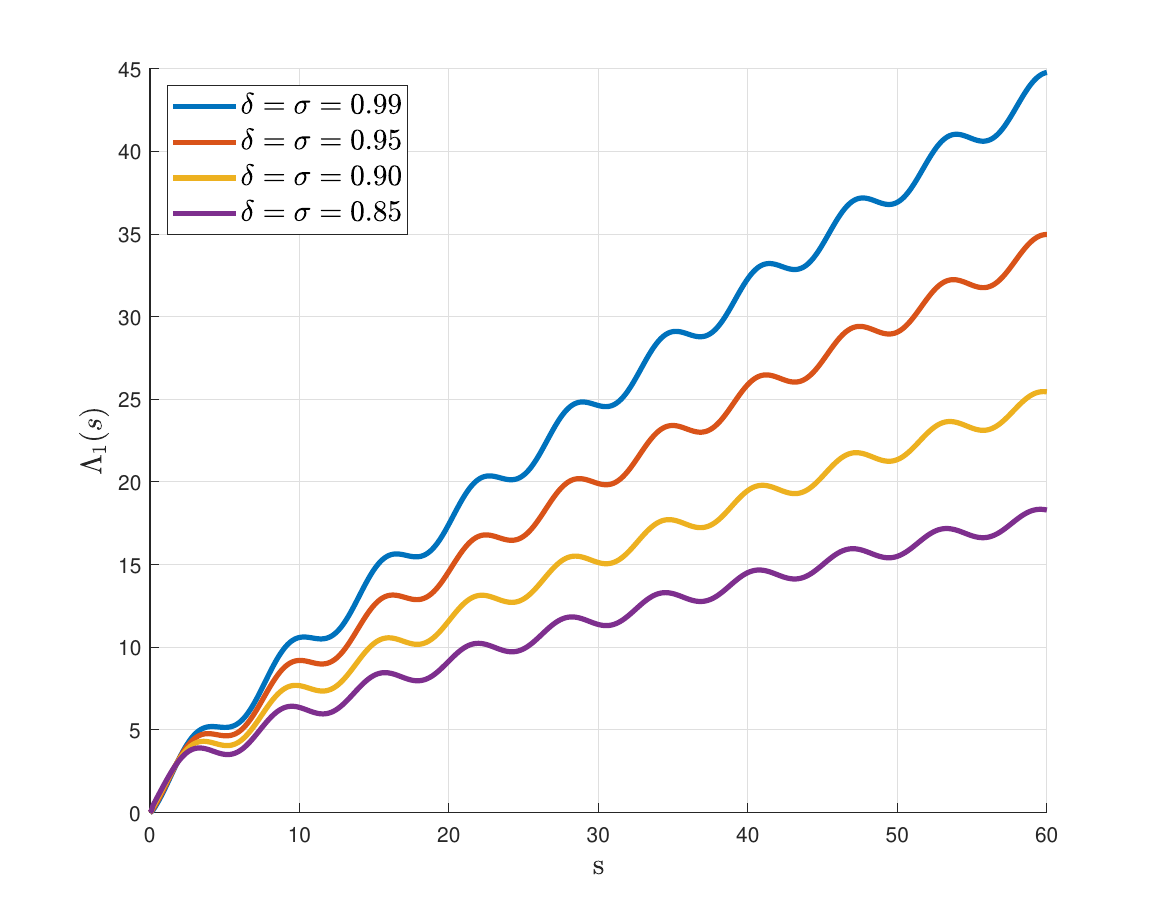}
\label{fig_fracperiod1}}
\subfigure[]{
\includegraphics[width=0.48\linewidth]{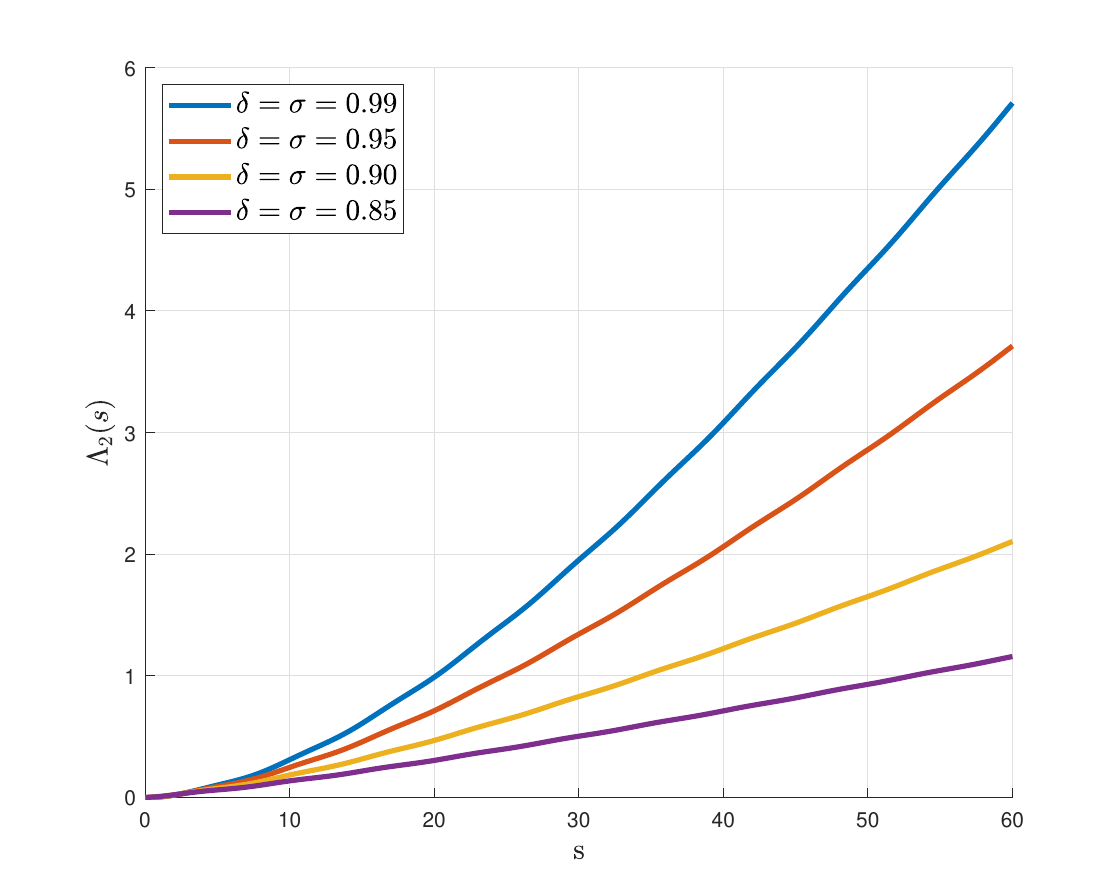}
\label{fig_fracperiod2}}
\newline
\hfill
\subfigure[]{
\includegraphics[width=0.48\linewidth]{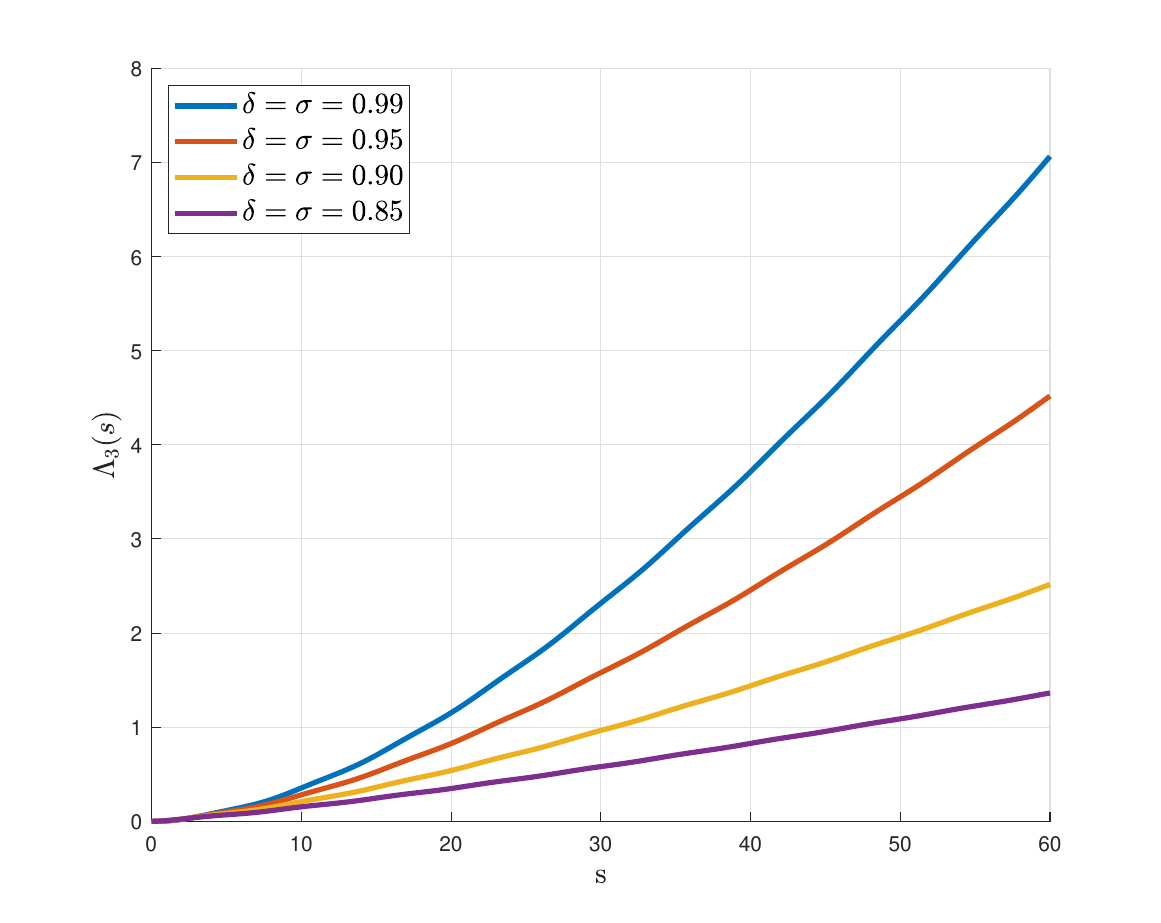}
\label{fig_fracperiod3}}
\caption{Behaviors of (a) $L_{1}(s)$, (b) $L_{2}(s)$
and (c) $L_{3}(s)$ for the periodic input model under
fractal and fractional orders $\theta=\sigma=0.85, 0.90, 0.95, 0.99$.}
\label{fig:figfracperiod}
\end{figure}


\section{Conclusion}
\label{sec:9}

We employed Mittag--Leffler type kernels to solve a system of fractional
differential equations using fractal-fractional (FF) operators with two
fractal and fractional orders. We derived equivalent FF-integral equations
from a compact initial value problem, and then proved existence and uniqueness
results. A stability analysis was conducted in different versions.
In the next sections, we examined and captured the behavior of the considered
fractal-fractional operator model \eqref{model2} with the help of two
different numerical techniques: an Adams--Bashforth method (ABM)
and a Newton polynomials method (NPM). From the obtained results,
we conclude that the considered techniques, ABM and NPM, are in highly agreement
and are very efficient to examine the system of fractional differential equations
under fractal-fractional operators describing the dynamics of the pollution in the lakes.
We also analyzed the considered model under various
fractal-fractional orders and examined the effects
of these non-integer orders on the behavior of each state variable $L_{1}(s)$,
$L_{2}(s)$, and $L_{3}(s)$ for three specific input models:
linear, exponentially decaying, and periodic. For each input model, we observed that
when the fractal-fractional order gets closer to the classical integer-order case, then
the effect of the pollution is increasing harmoniously for each lake model.
As a conclusion of these observations, it can be said that the non-integer order
operators have positive effects on the reduction of pollution in the lake pollution model.
As future work, we plan to investigate different real-world models
based on the techniques here developed.


\subsection*{CRediT authorship contribution statement}

\textbf{Tanzeela Kanwal:} Formal analysis, Methodology.
\textbf{Azhar Hussain:} Conceptualization, Formal analysis, Methodology.
\textbf{\.{I}brahim Avc{\i}:} Formal analysis, Methodology, Software.
\textbf{Sina Etemad:} Conceptualization, Methodology, Software.
\textbf{Shahram Rezapour:} Conceptualization, Methodology.
\textbf{Delfim F. M. Torres:} Formal analysis, Funding acquisition, Methodology.


\subsection*{Declaration of competing interest}

The authors declare that they have no known competing financial 
interests or personal relationships that could have appeared to
influence the work reported in this paper.


\subsection*{Data availability}

No data was used for the research described in the article.


\subsection*{Acknowledgments}

Torres was supported by the Portuguese Foundation
for Science and Technology (FCT), project UIDB/04106/2020
(\url{https://doi.org/10.54499/UIDB/04106/2020}).


\small


\end{document}